\numberwithin{figure}{section}
\newcommand{\Z}{\mathbb{Z}}
\newcommand{\Q}{\mathfrak{Q}}
\newcommand{\R}{\mathbb{R}}
\newcommand{\C}{\mathbb{C}}
\renewcommand{\P}{\mathfrak{P}}
\newcommand{\Rcal}{\mathfrak{R}}
\newcommand{\A}{{\mathcal A}}
\newcommand{\K}{{\mathbb K}}
\renewcommand{\a}{\mathbf{a}}
\newcommand{\bx}{\mathbf{x}}
\newcommand{\bw}{{\mathbf w}}
\newcommand{\bb}{\mathbf{b}}
\newcommand{\bv}{\mathbf{v}}
\newcommand{\bc}{\mathbf{c}}
\newcommand{\ur}{\underline{r}}
\newcommand{\ux}{\underline{x}}
\newcommand{\uy}{\underline{y}}
\newcommand{\uz}{\underline{z}}
\newcommand{\Acal}{\mathcal{A}}
\newcommand{\Ecal}{{\mathcal E}}
\newcommand{\Fcal}{{\mathcal F}}
\newcommand{\G}{{\mathcal G}}
\newcommand{\B}{{\mathcal B}}
\newcommand{\by}{{\mathbf y}}
\newcommand{\bu}{{\mathbf u}}
\newcommand{\be}{{\mathbf e}}
\newcommand{\bz}{{\mathbf z}}
\newcommand{\Quad}{\mathrm{Quad}}
\newcommand{\Lift}{\mathrm{Lift}}
\newcommand{\Root}{\mathrm{Root}}
\newcommand{\Swap}{\mathrm{Swap}}
\newcommand{\zeven}{\Z_2^{n,\mathrm{even}}}
\newcommand{\Int}{\mathrm{Int}}
\DeclareSymbolFont{bbold}{U}{bbold}{m}{n}
\DeclareSymbolFontAlphabet{\mathbbold}{bbold}
\newcommand{\bbone}{\mathbbold{1}}
\renewcommand{\bar}{\overline}
\newtheorem{thm}{Theorem}
\newtheorem*{thm*}{Theorem}
\newtheorem{lemma}[thm]{Lemma}
\newtheorem{prop}[thm]{Proposition}
\newtheorem{cor}[thm]{Corollary}
\theoremstyle{definition}
\newtheorem{definition}[thm]{Definition}
\newtheorem{example}[thm]{Example}
\newtheorem{remark}[thm]{Remark}
\numberwithin{thm}{section}
\title{Toric Geometry of the Cavender-Farris-Neyman Model with a Molecular Clock}
\author{Jane Ivy Coons and Seth Sullivant}
\date\today
\begin{document}
\maketitle

\begin{abstract}
We give a combinatorial description of the toric ideal of invariants of the 
Cavender-Farris-Neyman model with a molecular clock (CFN-MC) on a rooted binary phylogenetic 
tree and prove results about the polytope associated to this toric ideal.  
Key results about the polyhedral structure include that 
the number of vertices of this polytope is a Fibonacci number, 
the facets of the polytope can be described using the combinatorial 
``cluster" structure  of the underlying rooted tree, and the volume is
equal to an Euler zig-zag number. The toric ideal of invariants of the CFN-MC
model has a quadratic Gr\"obner basis with squarefree initial terms. Finally, 
we show that the Ehrhart polynomial of these polytopes, and therefore the Hilbert series of the ideals, 
depends only on the number of leaves of the underlying binary tree, and not on the topology of the tree itself.
These results are analogous to classic results for the Cavender-Farris-Neyman model
without a molecular clock.  However, new techniques are required because the molecular
clock assumption destroys the toric fiber product structure that governs group-based models 
without the molecular clock.
\end{abstract}

\section{Introduction}
The field of phylogenetics is concerned with reconstructing evolutionary histories
of different species or other taxonomic units (\emph{taxa} for short), such as genes or bacterial strains
(see \cite{felsenstein2004,semple2003} for general background on mathematical phylogenetics). 
In phylogenetics, we use trees to 
model these evolutionary histories. The leaves of these trees represent the 
extant taxa of interest, while the internal nodes represent their extinct common ancestors. 
Branching within the tree represents speciation events, wherein two 
species diverged from a single common ancestor. One may use combinatorial 
trees to depict only the evolutionary relationships between the organisms, 
or include branch lengths to represent time or amount of genetic mutation.

An organism's DNA is made up of chemical compounds called 
\textit{nucleotides}, or \textit{bases}. There are four different 
types of bases: adenine, thymine, guanine and cytosine, abbreviated A,T,G and C. 
These bases are split into two different types based upon their chemical structure. 
Adenine and guanine are  \textit{purines}, and thymine and cytosine are  \textit{pyrimidines}.
Evolution occurs via a series of \emph{substitutions}, or \emph{mutations}, 
within the DNA of an organism, wherein one nucleotide gets 
swapped out for another. Phylogenetic models often assume that base 
substitutions occur as a continuous-time Markov process along the edges of a tree, 
where the edge lengths are the time parameters in the Markov process 
\cite{jukes1969,kimura1980}. The entries of the transition matrices 
in this Markov process are the probabilities of observing a substitution 
from one base to another at a site in the genome at the end of the given time interval.

The two possible states of the Cavender-Farris-Neyman, or CFN, model
are purine and pyrimidine.  This is based on the observed fact that
within group substitutions (purine-purine or pyrimidine-pyrimidine) are much more
common, so a two state model like the CFN model only focuses on cross group
substitutions (which are known as \emph{transversions}). 
In the CFN model we further assume that the rate of 
substitution from purine to pyrimidine is equal to the rate of substitution from 
pyrimidine to purine. The algebraic and combinatorial structure of the CFN-model 
has been studied by a number of authors and key results include
descriptions of the generating set of the vanishing ideal \cite{sturmfels2005},
Gr\"obner bases \cite{sturmfels2005}, polyhedral geometry \cite{buczynska2007},
t Hilbert series \cite{buczynska2007}, and connections to the Hilbert scheme and
toric degenerations \cite{sturmfels2008}.

In this paper, we study the CFN model with an added \emph{molecular clock} condition, 
or the CFN-MC model. The molecular clock condition adds the requirement 
that the time elapsed from the root of the tree to any leaf of the tree is the same for all leaves.
For a fixed tree, the set of all probability distributions in the CFN-MC model along that tree is a
semialgebraic set in the probability simplex $\Delta_{2^n-1}$, 
where $n$ is the number of taxa. 
We describe the homogeneous vanishing
ideal of the Zariski closure of the CFN-MC model in projective space after an appropriate change of coordinates.
To obtain the CFN-MC model itself from the projective variety defined by this ideal,
one must intersect this variety with the simplex
in the positive orthant on which the sum of the coordinates is one,
as well as with other semialgebraic constraints that are not addressed in this paper.

The CFN-MC model belongs to a special class of phylogenetic
models called \textit{group-based} models. This means that under a linear change of coordinates
called the discrete Fourier transform, we can view the Zariski closure 
of the CFN-MC model as a toric variety \cite{evans1993,hendy1993}. 
This allows us to study it from the point of view of polyhedral geometry.
Our main results are summarized by the following:

\begin{thm*}
Let $T$ be a binary rooted tree with $n$ leaves. Let $I_T$ be the toric vanishing ideal of the CFN-MC model
on the tree $T$ as defined in Definition \ref{def:CFNMCideal},
and let $R_T$ be the associated polytope.
\begin{enumerate}
\item  The toric vanishing ideal $I_T$ has a generating set of quadratic
binomials.  These binomials form a Gr\"obner basis that has squarefree
initial terms.  (Theorem \ref{thm:generatorsmain})
\item  The polytope $R_T$ has $F_n$ vertices where $F_n$ denotes the 
 $n$th  Fibonacci number.  (Proposition \ref{thm:fibonacci})
 \item The polytope $R_T$ has an explicit facet description
 that can be understood in terms of the combinatorial structure of the tree (Corollary \ref{cor:finalfacets} )
\item  The normalized volume of $R_T$ is $E_{n-1}$, where $E_{n-1}$ is the Euler
zig-zag number. (Theorem \ref{thm:volumemain})
\item  The Hilbert series of $I_T$ only depends on $n$, not the specific  tree
$T$.  (Theorem \ref{thm:hilbertseries})  
\end{enumerate}
\end{thm*}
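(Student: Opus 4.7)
The plan is to set up the toric structure of the CFN-MC model in Fourier coordinates. After the discrete Fourier transform, the Fourier coordinates for the CFN model on a rooted binary tree $T$ are indexed by $\{0,1\}$-labelings of the edges of $T$ that are consistent (in the group-based sense). Imposing the molecular-clock condition forces the edge parameters along any root-to-leaf path to sum to a common value, which identifies certain Fourier coordinates. The first step is to encode the resulting parametrization combinatorially, most likely by attaching to each Fourier coordinate some subset of internal nodes or cherries of $T$ respecting the clock. Once this parametrization is in hand, each of the four conclusions becomes a statement about a specific combinatorial object associated to~$T$.

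For part~(1), I would first write down the obvious quadratic ``swap'' binomials coming from rearranging the combinatorial data indexing monomials in the parametrization, then argue these generate $I_T$ by a Markov-basis / Lawrence-lifting argument, or by induction on the tree obtained by pruning a cherry. To promote the generating set to a Gr\"obner basis with squarefree initial terms, I would choose a term order adapted to a root-to-leaf traversal and verify Buchberger's criterion, again inductively on $T$. For part~(2), the vertices of $R_T$ are the distinct exponent vectors of the parametrization, and the grafting recursion ``attach a pendant cherry vs.\ attach a pendant leaf at the root'' should produce a recurrence $v(T)=v(T')+v(T'')$ on strictly smaller trees, giving the Fibonacci count after checking the base cases. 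For part~(3), a squarefree Gr\"obner basis from part~(1) yields a unimodular regular triangulation of $R_T$, so the normalized volume counts maximal simplices; I would set up a bijection between these simplices and alternating permutations of $[n-1]$ (or linear extensions of a zig-zag poset), which are enumerated by $E_{n-1}$.

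The main obstacle, as the abstract already signals, is part~(4): without the toric-fiber-product decomposition that governs the non-clocked CFN model, there is no free-standing reason the Hilbert series should be topology-independent. My plan here is to reduce, via part~(1), to computing the Hilbert series of the squarefree monomial initial ideal, i.e.\ the Stanley--Reisner ring of a simplicial complex $\Delta_T$, and then prove the $f$-vector of $\Delta_T$ depends only on $n$. I would attempt this via an explicit tree-to-tree bijection: given any NNI-type rearrangement relating two binary trees with $n$ leaves, construct a matching between the faces of $\Delta_T$ and $\Delta_{T'}$ that preserves dimension. If a single clean bijection is elusive, a fallback is to compute the $h$-vector directly from the triangulation in part~(3) and show each entry can be expressed as a count of combinatorial objects (alternating permutations with prescribed statistics) that manifestly see only $n$.

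The hardest step, I expect, is the face-preserving bijection in part~(4); the quadratic Gr\"obner basis in part~(1) is the key technical enabler for the whole program, since it simultaneously supplies the triangulation needed for part~(3) and the monomial reduction needed for part~(4). Parts~(2) and~(3) should reduce to routine recursions once the combinatorial indexing of Fourier coordinates has been made explicit.
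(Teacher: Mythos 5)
Your high-level plan correctly identifies the Fourier/toric setup, the Gr\"obner-basis strategy for part~(1), the cherry-based recursion for part~(2), the order-polytope/alternating-permutation connection for part~(3), and the NNI-bijection idea for part~(4). However, there are two genuine gaps where the plan as stated would stall.

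For part~(1), ``induction by pruning a cherry'' does not work uniformly. The paper's decomposition is a toric fiber product at an interior node adjacent to exactly two other \emph{interior} nodes (not a cherry), and there is an entire class of trees---the \emph{cluster trees}, where every non-leaf vertex is in one big cluster or its neighbor set---for which no such node exists and the toric-fiber-product structure is simply absent. This is the whole point flagged in the abstract. Handling cluster trees requires a separate lifting construction: the paper introduces ``liftable'' term orders compatible with a block structure coming from root-leaf traversability and root-augmentability, and builds the Gr\"obner basis for a cluster tree from the bicluster subtree via $\Root$ and $\Swap$ sets (Propositions~\ref{prop:Tprime} and~\ref{prop:liftable}). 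Your proposal does not anticipate that the na\"ive induction breaks, nor does it contain a replacement argument, so part~(1) as planned would fail on cluster trees. For part~(4), you correctly guess that an NNI-based bijection is the route, but the paper's bijection is on \emph{lattice points of dilates} $mR_T\cap\Z^{n-1}\to mR_{T'}\cap\Z^{n-1}$, not on faces of the Stanley--Reisner complex, and its construction hinges on a substantial combinatorial apparatus (maintaining vs.\ nonmaintaining vertices of $R_T$ under the NNI, the notion of a node being \emph{blocked} in a top-set, $d$- and $f$-compressed representations, and minimality of representations in terms of nonmaintaining vertices). Your plan names the goal but supplies neither this machinery nor a substitute, and the fallback ``compute the $h$-vector from alternating-permutation statistics'' also presupposes a tree-independent triangulation for arbitrary $T$, which is essentially what needs proving. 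A smaller remark: for part~(3) the paper only proves the caterpillar case directly (unimodular affine isomorphism of $R_{C_{n+1}}$ with the order polytope of the zig-zag poset $P_n$) and then imports topology-independence from part~(4); your proposal implicitly aims for a simplex-to-alternating-permutation bijection for arbitrary $T$, which is harder and likely circular without part~(4) in hand.
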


These results are analogues of the major results in \cite{buczynska2007, sturmfels2005,  sturmfels2008}
for the CFN model, but the molecular clock assumption presents new challenges.

The outline of the paper is as follows.
In Section 2, we give preliminary definitions regarding phylogenetic trees.
In Section 3, we describe the CFN-MC model in detail.
We describe how the discrete Fourier transform allows us to view the CFN-MC model as a toric variety. 
In Section 4, we study the combinatorics of the polytope associated to the 
toric ideal of phylogenetic invariants of the CFN-MC model. 
In particular, we give vertex and facet descriptions for this polytope, 
and show that the number of vertices of the CFN-MC polytope is equal to a Fibonacci number.
In Section 5, we study the generators of the 
toric ideal of phylogenetic invariants of the CFN-MC model and 
give a quadratic Gr\"obner basis for this ideal. To accomplish this, 
we make use of the theory of toric fiber products \cite{sullivant2007}, as well as new tools to
handle the case of cluster trees where the toric fiber product structure is not present.
In Section 6, we prove our results on the Hilbert series of the CFN-MC ideal
which implies the results on the volume of the polytope $R_T$.


\section{Preliminaries on Trees}

This section provides a brief background on combinatorial
trees and metric trees.  A more detailed description can be found
in \cite{felsenstein2004,semple2003}.

\begin{definition}
A \textit{tree} is a connected graph with no cycles. 
A \textit{leaf} of the tree $T$ is a node of $T$ of degree 1. 
A tree is \textit{rooted} if it has a distinguished node of degree 2, 
called the \textit{root}. A \textit{rooted binary tree} is a rooted tree in 
which all non-leaf, non-root nodes have degree 3. 
An internal node of $T$ is a \emph{cherry node} if it is adjacent to two leaves. 
\end{definition}

\begin{example}
Consider the rooted binary tree in Figure \ref{fig:treeex}. It is rooted with root $a$. The leaves of this tree are $f,g,h,i$ and $j$. This tree is binary, since the three nodes $b,c$ and $e$ that are not the root or the leaves have degree three. The nodes $c$ and $e$ are both cherry nodes.

\begin{figure}
\begin{subfigure}{.45\textwidth}
\begin{center}
\begin{tikzpicture}[scale=.7]
\draw(0,0)--(3.5,3.5)--(7,0);
\draw(1,1)--(2,0);
\draw(2.5,2.5)--(5,0);
\draw(4,1)--(3,0);
\draw[fill] (1,1) circle [radius=.05];
\node[left] at (1,1) {$c$};
\draw[fill] (2.5,2.5) circle [radius=.05];
\node[left] at (2.5,2.5) {$b$};
\draw[fill] (3.5,3.5) circle [radius=.05];
\node[left] at (3.5,3.5) {$a$};
\draw[fill] (4,1) circle [radius=.05];
\node[left] at (4,1) {$e$};
\draw[fill] (0,0) circle [radius = .05];
\node[below] at (0,0) {$f$};
\draw[fill] (2,0) circle [radius = .05];
\node[below] at (2,0) {$g$};
\draw[fill] (3,0) circle [radius = .05];
\node[below] at (3,0) {$h$};
\draw[fill] (5,0) circle [radius = .05];
\node[below] at (5,0) {$i$};
\draw[fill] (7,0) circle [radius = .05];
\node[below] at (7,0) {$j$};
\end{tikzpicture}
\end{center}
\caption{An example of a rooted binary tree}
\label{fig:treeex}
\end{subfigure} \hfill \begin{subfigure}{.45\textwidth}
\begin{center}
\begin{tikzpicture}[scale=.7]
\node[left] at (.5,.5) {2};
\node[right] at (1.5,.5) {2};
\node[left] at (1.75,1.75) {2};
\node[right] at (3.25,1.75){3};
\node[left] at (3.5,.5){1};
\node[right] at (4.5,.5){1};
\node[left] at (3,3){1};
\node[right] at (5.25,1.75){5};
\draw(0,0)--(3.5,3.5)--(7,0);
\draw(1,1)--(2,0);
\draw(2.5,2.5)--(5,0);
\draw(4,1)--(3,0);
\draw[fill] (1,1) circle [radius=.05];
\draw[fill] (2.5,2.5) circle [radius=.05];
\draw[fill] (3.5,3.5) circle [radius=.05];
\draw[fill] (4,1) circle [radius=.05];
\draw[fill] (0,0) circle [radius = .05];
\draw[fill] (2,0) circle [radius = .05];
\draw[fill] (3,0) circle [radius = .05];
\draw[fill] (5,0) circle [radius = .05];
\draw[fill] (7,0) circle [radius = .05];
\end{tikzpicture}
\end{center}
\caption{Branch lengths that make this tree equidistant.}
\label{fig:equitreeex}
\end{subfigure}
\caption{}
\end{figure}
\end{example}

Typically, we orient trees with the root at the top 
of the page and the leaves toward the bottom. This allows us to 
think of the tree as being directed, so that time starts at the 
root and progresses in the direction of leaves. Labeling the leaves 
of the tree with the taxa $\{1, \dots, n\}$ gives a proposed 
evolutionary history of these taxa. For any tree $T$, there 
exists a unique path between any two nodes in the tree. This 
allows us to say that if $a$ and $b$ are nodes of the rooted tree $T$, 
then $a$ is an \textit{ancestor} of $b$ and $b$ is a 
\textit{descendant} of $a$ if $a$ lies along the path from the root 
of $T$ to $b$. Furthermore, a rooted binary tree on $n$ leaves has 
$n-1$ internal nodes and $2n-2$ edges. Proofs of these facts can 
be found in Chapter 2.1 of \cite{west2001}.  

Trees may also come equipped with \emph{branch lengths}, 
which can represent time, amount of substitution, etc.
The branch lengths are assignments of positive real numbers to
each edge in the tree.  The assignment of branch lengths to the edges of
a tree induces a metric on the nodes in the tree, where the
distance between a pair of nodes is the sum of the branch lengths
on the unique path in the tree connecting those nodes.
Not every metric on a finite set arises in this way; for example, 
 the resulting \emph{tree metrics}
must satisfy the four-point condition \cite{buneman1974}.  In this paper we are interested in 
the following restricted class of tree metrics.

\begin{definition}
An \textit{equidistant} tree  $T$ is a rooted  tree with 
positive branch lengths 
such that the distance between the root and any leaf is the same.
\end{definition}

The tree pictured in Figure \ref{fig:equitreeex} is an example of an equidistant tree.
In the phylogenetic modeling literature, this is
known as imposing the \textit{molecular clock} condition on the model.
In other contexts, an equidistant tree metric is also known as an \emph{ultrametric}.


\section{The CFN-MC Model}\label{sec:CFNMC}

In this section, we  review the discrete Fourier transform,
as well as known results concerning the toric structure of the CFN model
without the molecular clock \cite{sturmfels2005}.
Note that the CFN model is also referred to as
the binary Jukes-Cantor model and the binary symmetric model throughout the literature.
We use these results to provide a combinatorial description of the
toric ideal of phylogenetic invariants of the CFN model \emph{with} the molecular clock,
which is the main object of study for the present paper.

The CFN model describes substitutions 
at a single site in the gene sequences of the taxa in question. 
It is a two-state model, where the states are purine (adenine and guanine) 
and pyrimidine (thymine and cytosine). We denote purines 
with  $U$ and pyrimidines with  $Y$. The CFN model assumes a 
continuous-time Markov process along a fixed rooted binary tree 
with positive branch lengths. The rate matrix for the Markov process in the CFN model is 
\[
Q = \begin{blockarray}{ccc}
U & Y & \\
\begin{block}{[cc]c}
-\alpha & \alpha & U \\
\alpha & -\alpha & Y\\
\end{block}
\end{blockarray},
\]
for some parameter $\alpha > 0$ that describes the rate of change of 
purines to pyrimidines or vice versa. Note that in the CFN model, 
we assume that the rate of substitution from purine to pyrimidine 
is equal to the rate of substitution from pyrimidine to purine. 
We also assume that the distribution of states at the root of the tree,
or \emph{root distribution}, is uniform.

Let $t_e > 0$ be the branch length of an edge $e$ in the rooted
binary tree $T$. The transition matrix $M^e$ associated 
to the edge $e$ is the matrix exponential,
\begin{align*}
M^e & = \exp(Qt_e) \\
&= \begin{bmatrix}
(1 + e^{-2\alpha t_e})/2 & (1 - e^{-2\alpha t_e})/2 \\
(1 - e^{-2\alpha t_e})/2 & (1 + e^{-2\alpha t_e})/2
\end{bmatrix}.
\end{align*}

Denote by $a(e)$ and $d(e)$ the two nodes adjacent to $e$ 
so that $d(e)$ is a descendant of $a(e)$. Then the $(i,j)$th entry 
of $M^e$, $M^e(i,j)$, is the probability that $d(e)$ 
has state $j$ given that $a(e)$ has state $i$ for all $i,j \in \{U,Y\}$.

Let $T$ be a rooted binary tree with edge set $E$ and nodes 
$1, \dots, 2n-1$. For the following section, we label these nodes so that $1, \dots, n$ are leaf labels. 
We can identify the set of states $\{U,Y\}$ with elements of the two element group $\Z_2$. 
(Note that it does not matter which identification is chosen; either $U = 0, Y= 1$, or $Y = 0, U = 1$
produce the same results.)
Let $\bu \in \Z_2^{2n-1}$ be a labeling of all of the nodes 
of $T$ by states in the state space, and let $u_i$ denote the 
$i$th coordinate of $\bu$, which is the labeling of node $i$. 
Then the probability of observing the set of states $\bu$ is
\begin{equation}\label{eqn:probability1}
\frac{1}{2} \prod_{e \in E} M^e (u_{a(e)}, u_{d(e)}).
\end{equation}
We note that the factor of $\frac{1}{2}$ appears in this formula
because the distribution of states at the root of the tree is uniform.

\begin{example}
For the tree in Figure \ref{fig:probabilityexample1}, the probability of 
observing the states $(0,1,1,0,1)$ (left to right and bottom to top) is
\[
\frac{1}{2} M^{e_1}(1,0) M^{e_2}(0,0) M^{e_3}(0,1) M^{e_4}(1,1).
\]

\begin{figure}

\begin{center}
\begin{tikzpicture}
\draw(0,0)--(2,2)--(4,0);
\draw(1,1)--(2,0);
\draw[fill] (0,0) circle [radius=.05];
\draw[fill] (1,1) circle [radius=.05];
\draw[fill] (2,2) circle [radius=.05];
\draw[fill] (2,0) circle [radius=.05];
\draw[fill] (4,0) circle [radius=.05];
\node[below] at (0,0) {0};
\node[below] at (2,0) {1};
\node[below] at (4,0) {1};
\node[left] at (1,1) {0};
\node[above] at (2,2) {1};
\node[left] at (1.5,1.5) {$e_1$};
\node[left] at (.5,.5) {$e_2$};
\node[right] at (1.5,.5) {$e_3$};
\node[right] at (3,1) {$e_4$};
\end{tikzpicture}
\end{center}
\caption{A labeling of all nodes of tree $T$ with elements of $\Z_2$.}
\label{fig:probabilityexample1}
\end{figure}
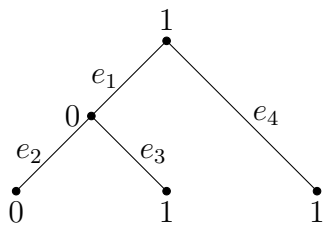
\end{example}

We have described the CFN model thus far with all variables observed.
However, in typical phylogenetic analysis we do not have access to the
DNA of the unknown ancestral species, and hence we need to consider a
hidden variable model where all internal nodes correspond to hidden
states.
In this case, to determine the probability of observing a certain set 
of states at the leaves, we sum the probabilities given by Equation (\ref{eqn:probability1}) 
over all possible labelings of the internal nodes of the tree.  
Let $\bv \in \Z_2^n$ be a labeling of the leaves of $T$. 
Since the CFN-MC model assumes a uniform distribution of states at the root, 
the probability of observing the set of states $\bv$ at the leaves is
\begin{equation}\label{eqn:prob}
p(v_1, \dots, v_n) = \frac{1}{2} \sum_{(v_{n+1}, \dots, v_{2n-1}) \in \Z_2^{n-1}}  
\prod_{e \in E} M^e (v_{a(e)}, v_{d(e)}).
\end{equation}
Note that $d(e)$ might be a leaf, in which case $v_{d(e)} = v_i$ for the appropriate value of $i$.

\begin{example}
Consider the tree from Figure \ref{fig:probabilityexample1}. 
We use Equation (\ref{eqn:prob}) to compute the probability 
$p(0,1,1)$ of observing states $(0,1,1)$ at the leaves of $T$. 
Summing over all possible labelings of the internal nodes of $T$ yields
\begin{align*}
p(0,1,1)  = & \tfrac{1}{2} \left( M^{e_1}(0,0) M^{e_2}(0,0) M^{e_3}(0,1) M^{e_4}(0,1)  \right.\\
& + M^{e_1}(1,0) M^{e_2}(0,0) M^{e_3}(0,1) M^{e_4}(1,1) \\
& + M^{e_1}(0,1) M^{e_2}(1,0) M^{e_3}(1,1) M^{e_4}(0,1) \\
&  \left. + M^{e_1}(1,1) M^{e_2}(1,0) M^{e_3}(1,1) M^{e_4}(1,1) \right).
\end{align*}

\end{example}

In the following discussion, we perform a linear change 
of coordinates on the probability coordinates 
and introduce new free parameters in terms of the entries of the
transition matrices.  This allows us 
to realize this parametrization as a monomial map. In order to accomplish this, 
we first provide some background concerning group-based models 
and the discrete Fourier transform. We always assume that $G$ is a finite abelian group. 

\begin{definition}
Let $M^{e} = \exp(Qt_e)$ be a transition matrix arising from a 
continuous-time Markov process along a tree. Let $G$ be a finite abelian 
group under addition with order equal to the number of states of the model, and identify 
the set of states with elements of $G$. The model is \textit{group-based} 
with respect to $G$ if for each transition matrix $M^{e}$ arising from the model, 
there exists a function $f^e: G \rightarrow \R$ such that 
$M^{e}(g,h) = f^e(g-h)$ for all $g,h \in G$.
\end{definition}

In particular, note that the CFN-MC model is group-based 
with respect to $\Z_2$ with function $f^e: \Z_2 \rightarrow \R$ defined by 
\[
f^e(0) = (1+\exp(-2\alpha t_e))/2 \quad \mbox{ and } \quad f^e(1) = (1-\exp(-2\alpha t_e))/2.
\]

\begin{definition}
The \textit{dual group} $\hat{G} = \text{Hom}(G, \C^{\times})$ 
of a group $G$ is the group of all homomorphisms $\chi: G \rightarrow \C^{\times}$, 
where $\C^{\times}$ denotes the group of non-zero complex numbers under multiplication. 
Elements of the dual group are called \textit{characters}. 
Let $\bbone$ denote the constant character that maps all elements of $G$ to 1.
\end{definition}

Throughout this section, we will make use of the following classical theorems. Proofs of these can be found in \cite{simon1996}.

\begin{prop}
Let $G$ be a finite abelian group. Its dual group $\hat{G}$ is isomorphic to $G$. 
Furthermore, for two finite abelian groups $G_1$ and $G_2$, 
$\widehat{G_1 \times G_2} \cong \hat{G_1} \times \hat{G_2}$ via 
$\chi((g_1, g_2)) = \chi_1(g_1) \chi_2 (g_2)$ for $g_1 \in G_1$ and $g_2 \in G_2$ and
some $\chi_1 \in \hat{G_1}$ and $\chi_2 \in \hat{G_2}$.
\end{prop}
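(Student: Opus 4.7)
The plan is to prove the two assertions in reverse order: the product-compatibility statement, combined with the structure theorem for finite abelian groups, reduces the first assertion to the cyclic case.

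First I would verify the product statement. Define $\Phi \colon \hat{G_1} \times \hat{G_2} \to \widehat{G_1 \times G_2}$ by $\Phi(\chi_1,\chi_2)(g_1,g_2) = \chi_1(g_1)\chi_2(g_2)$. A direct computation using that each $\chi_i$ is a homomorphism shows both that $\Phi(\chi_1,\chi_2)$ is itself a character and that $\Phi$ is a group homomorphism. For injectivity, if $\Phi(\chi_1,\chi_2) = \bbone$, then setting $g_2 = 0$ gives $\chi_1 \equiv 1$ and symmetrically $\chi_2 \equiv 1$. For surjectivity, given $\chi \in \widehat{G_1 \times G_2}$, set $\chi_1(g_1) := \chi(g_1,0)$ and $\chi_2(g_2) := \chi(0,g_2)$; then $\chi_i \in \hat{G_i}$ and $\chi(g_1,g_2) = \chi((g_1,0)+(0,g_2)) = \chi_1(g_1)\chi_2(g_2)$ by multiplicativity of $\chi$.

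Next I would dispatch the cyclic case $G = \Z/n\Z$. A character is uniquely determined by its value on the generator $1$, and the relation $\chi(1)^n = \chi(0) = 1$ restricts that value to an $n$-th root of unity. Conversely, any $n$-th root of unity extends uniquely to a character. Thus $\widehat{\Z/n\Z}$ is isomorphic as a group to the multiplicative group of $n$-th roots of unity in $\C^{\times}$, which is cyclic of order $n$, giving $\widehat{\Z/n\Z} \cong \Z/n\Z$. Finally, the structure theorem writes any finite abelian group as $G \cong \Z/n_1\Z \times \cdots \times \Z/n_k\Z$. Iterating the product compatibility and applying the cyclic case yields
\[
\hat{G} \;\cong\; \widehat{\Z/n_1\Z} \times \cdots \times \widehat{\Z/n_k\Z} \;\cong\; \Z/n_1\Z \times \cdots \times \Z/n_k\Z \;\cong\; G.
\]

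There is no genuine obstacle here; every step is a routine verification, consistent with the authors' choice to simply cite \cite{simon1996}. The one conceptual caveat worth recording is that the isomorphism $\hat{G} \cong G$ produced this way depends on the choice of cyclic decomposition and is not canonical, whereas the product isomorphism $\widehat{G_1 \times G_2} \cong \hat{G_1} \times \hat{G_2}$ is natural in $G_1$ and $G_2$ and is the part that will actually be used downstream to diagonalize the CFN-MC parametrization via the discrete Fourier transform.
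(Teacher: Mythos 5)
Your proof is correct. The paper does not actually supply its own argument for this proposition—it simply cites \cite{simon1996}—and what you have written is the standard textbook derivation (product compatibility, reduction to cyclic groups via the structure theorem, and the identification of $\widehat{\Z/n\Z}$ with $n$-th roots of unity), so there is nothing to reconcile; your remark that $\hat{G}\cong G$ is non-canonical while $\widehat{G_1\times G_2}\cong\hat{G_1}\times\hat{G_2}$ is natural is a worthwhile addendum the paper leaves implicit.
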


\begin{definition}
Let $f: G \rightarrow \C$ be a function. 
The \textit{discrete Fourier transform} of $f$ is the function
\[
\hat{f}:  \hat{G} \rightarrow \C,  \quad \quad \chi  \mapsto  \sum_{g \in G} \chi(g) f(g).
\]
\end{definition}

The discrete Fourier transform is the linear change of coordinates 
that allows us to view Equation (\ref{eqn:prob}) as a monomial parametrization. 
We can write the Fourier transform of $p$ over $\Z_2^n$ in equation (\ref{eqn:prob}) as
\[
\hat{p}(\chi_1, \dots, \chi_n) = \sum_{(g_1, \dots, g_n) \in \Z_2^n} 
p(g_1, \dots, g_n) \prod_{i=1}^n \chi_i (g_i). \\
\]
Let $\hat{\Z}_2 = \{ \bbone, \phi \}$, where $\phi$ denotes the only nontrivial 
homomorphism from $\Z_2$ to $\C$. Then $\Z_2$ and $\hat{\Z}_2$ are isomorphic via
the map that identifies $0$ to $\bbone$ and $1$ with $\phi$.
Using this fact, 
we can write $\hat{p}$ as a function of $n$ elements of $\Z_2$ as
\[
\hat{p}(i_1, \dots, i_n) = \sum_{(j_1, \dots, j_n) \in \Z_2^n} (-1)^{i_1j_1 + \dots + i_n j_n} p(j_1, \dots, j_n)
\]
for all $(i_1, \dots, i_n) \in \Z_2^n$. 

The following theorem, 
independently discovered by Evans and Speed in \cite{evans1993} and 
Hendy and Penny in \cite{hendy1993}, describes the monomial parametrization 
obtained from the discrete Fourier transform.  A detailed account can
also be found in Chapter 15 of \cite{sullivant2018}.

\begin{thm}
Let $p(g_1, \dots, g_n)$ be the polynomial describing 
the probability of observing states $(g_1, \dots, g_n) \in G^n$ at the 
leaves of phylogenetic tree $T$ under a group-based model. 
Denote by $\pi$ the distribution of states at the root of the tree. 
Let $f^e: G \rightarrow \R$ denote the function associated to edge $e$
by the definition of a group based model.  
Then the Fourier transform of $p$ is
\[
\hat{p}(\chi_1, \dots, \chi_n) = \hat{\pi}\Big(\prod_{i=1}^n \chi_i\Big) \prod_{e \in E(T)} \hat{f^e} \Big( \prod_{l \in \lambda(e)} \chi_l\Big),
\]
where $\lambda(e)$ is the set of all leaves that are descended from edge $e$.
\end{thm}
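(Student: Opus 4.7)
The plan is to prove this by direct computation: expand the Fourier transform using the group-based property, reparametrize the hidden variables by the root label together with edge differences, and exploit the homomorphism property of characters to factor the sum across edges. Concretely, I would first substitute the hidden-variable formula for $p(g_1,\dots,g_n)$ into the definition of $\hat{p}$ and use the group-based property to replace each transition-matrix entry $M^e(g_{a(e)},g_{d(e)})$ by $f^e(g_{a(e)}-g_{d(e)})$. This rewrites $\hat{p}(\chi_1,\dots,\chi_n)$ as a single sum over complete labelings $(g_1,\dots,g_{2n-1})\in G^{2n-1}$ of all vertices of $T$.

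The heart of the proof is a change of variables. Let $g_r$ denote the root label and set $h_e:=g_{a(e)}-g_{d(e)}$ for each edge $e$. Since $T$ has $2n-1$ vertices and $2n-2$ edges, the map $(g_1,\dots,g_{2n-1})\mapsto (g_r,(h_e)_{e\in E})$ is a bijection from $G^{2n-1}$ to $G\times G^{|E|}$; its inverse reconstructs $g_v=g_r-\sum_{e\in P_v}h_e$, where $P_v$ is the unique root-to-$v$ path in $T$. Using that each $\chi_i$ is a group homomorphism together with the combinatorial identity $e\in P_i \Leftrightarrow i\in\lambda(e)$ (which lets me swap the order of the double product over leaves and path edges), I can rewrite
\[
\prod_{i=1}^n \chi_i(g_i) \;=\; \Big(\prod_{i=1}^n\chi_i\Big)(g_r)\;\prod_{e\in E}\Big(\prod_{i\in\lambda(e)}\chi_i\Big)(-h_e).
\]

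Once the summand is expressed in this form, each term depends on exactly one of $g_r$ or a single $h_e$, so the sum over $G^{2n-1}$ factors completely. The root-label sum is $\hat{\pi}\bigl(\prod_i\chi_i\bigr)$ by definition of the Fourier transform of $\pi$, and each edge factor equals $\hat{f^e}\bigl(\prod_{i\in\lambda(e)}\chi_i\bigr)$ after the harmless reindexing $h\mapsto -h$ in the inner sum. The main obstacle is essentially bookkeeping: verifying that the reparametrization truly is a bijection (so the two sums match term-by-term), and carefully tracking which edges lie on which root-to-leaf path so that the cluster sets $\lambda(e)$ emerge in the final formula. The sign arising from the choice $h_e=g_{a(e)}-g_{d(e)}$ versus $g_{d(e)}-g_{a(e)}$ would require slightly more care for a general abelian group $G$, but it is invisible in the $\Z_2$ case that the rest of the paper focuses on, where every element is its own inverse.
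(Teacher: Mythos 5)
The paper does not prove this theorem; it is attributed to Evans--Speed and Hendy--Penny with a pointer to Sullivant's book for details, so there is no in-paper argument to compare against. Your proof is the standard one: substitute the hidden-variable expression for $p$ into $\hat{p}$, replace $M^e(g_{a(e)},g_{d(e)})$ by $f^e(g_{a(e)}-g_{d(e)})$, change variables from the $2n-1$ vertex labels to the root label $g_r$ together with the $2n-2$ edge differences $h_e$, use that each $\chi_i$ is a homomorphism together with the identity $e\in P_i\Leftrightarrow i\in\lambda(e)$ to split $\prod_i\chi_i(g_i)$ into factors each involving a single new coordinate, and then factor the sum. The change of variables is indeed a bijection because $|V(T)|=|E(T)|+1$ and the inverse reconstructs $g_v$ by accumulating edge differences along the root-to-$v$ path; the argument is sound.

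The sign subtlety you flag at the end is real and worth saying a bit more about. With the paper's conventions $M^e(g,h)=f^e(g-h)$ and $\hat{f}(\chi)=\sum_{g}\chi(g)f(g)$, the edge factor that emerges from your computation is $\sum_{h}f^e(h)\chi(h)^{-1}=\hat{f^e}(\chi^{-1})$ for $\chi=\prod_{i\in\lambda(e)}\chi_i$; the reindexing $h\mapsto -h$ turns this into $\sum_h f^e(-h)\chi(h)$, which equals $\hat{f^e}(\chi)$ only when $f^e$ is even. Evenness holds automatically for $\Z_2^n$-based models (every group element and every character is its own inverse) and for any reversible group-based model with uniform stationary distribution (there $M^e$ is symmetric, so $f^e(k)=f^e(-k)$), so the statement is correct for everything this paper uses. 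For a general abelian group with a non-even $f^e$, the formula should read $\hat{p}(\chi_1,\dots,\chi_n)=\hat{\pi}\big(\prod_i\chi_i\big)\prod_e\hat{f^e}\big(\prod_{i\in\lambda(e)}\chi_i^{-1}\big)$, or one should adopt the opposite sign convention $M^e(g,h)=f^e(h-g)$, under which your calculation goes through with no reindexing at all. Good that you caught this rather than gliding past it.
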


 Note that since $\pi$ is the uniform distribution,
\[
\hat{\pi}(\chi) = \frac{1}{2} (\chi(0) + \chi(1))  = 
\begin{cases}
1, & \text{ if } \chi = \bbone, \\
0, & \text{ if } \chi = \phi.
\end{cases}
\]
Interpreting this in the context of the Fourier transform of $p$ and using the isomorphism of $\Z_2$ and $\hat{\Z}_2$  gives that
\begin{equation} \label{eqn:transformafterroot}
\hat{p}(g_1, \dots, g_n) = \begin{cases}
\prod_{e \in E(T)} \hat{f^e} \big( \sum_{l \in \lambda{e}} g_l\big), 
& \text{ if } \sum_{i=1}^n g_i = 0  \\
0, & \text{ if } \sum_{i=1}^n g_i = 1.
\end{cases}
\end{equation}
Consider the Fourier transform of each $f^e(g)$. In the case of the CFN-MC model, we can think 
of the discrete Fourier transform as a simultaneous diagonalization of the transition matrices via 
a $2 \times 2$ Hadamard matrix. Indeed, letting $H = \begin{bmatrix}
1 & 1 \\
1 & -1
\end{bmatrix}$ gives that
\[
H^{-1} M^e H = \begin{bmatrix}
a_0^e & 0 \\
0 & a_1^e 
\end{bmatrix},
\]
where $a_1^e = \exp(-2\alpha t_e)$ and $a_0^e = 1$. 
(Although $a_0^e = 1$ for all $e$, it is useful to think of $a^e_0$ 
as a free parameter for the following discussion.) Note that these values 
are exactly those obtained by performing the discrete Fourier transform on $f^e$:
\begin{align*}
\hat{f^e}(\bbone) & = f^e(0) \bbone(0) + f^e(1) \bbone(1)   \\
& = \frac{1 + \exp(-2\alpha t_e)}{2} + \frac{1 - \exp(-2 \alpha t_e)}{2} \\
&= 1 \\
\hat{f^e}(\phi) &= f^e(0) \phi(0) + f^e(1) \phi(1) & \\
& =  \frac{1 + \exp(-2\alpha t_e)}{2} - \frac{1 - \exp(-2 \alpha t_e)}{2}\\
 &= \exp(-2 \alpha t_e).
\end{align*}

Using these new parameters, the isomorphism of $\Z_2$ and $\hat{\Z}_2$ 
and equation (\ref{eqn:transformafterroot}), we can see that
\begin{equation}\label{eqn:fourierparam1}
\hat{p}(g_1, \dots, g_n) = \begin{cases}
\prod_{e \in E(T)} a_{i(e)}^e, & \text{ if } \sum_{i=1}^n g_i = 0 \\
0, & \text{ if } \sum_{i=1}^n g_i = 1 ,
\end{cases}
\end{equation}
where $i(e)$ denotes the sum in $\Z_2$ of the group 
elements at all leaves descended from $e$. Note that this is, 
in fact, a monomial parametrization, as desired.

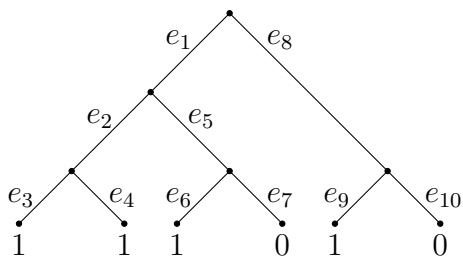
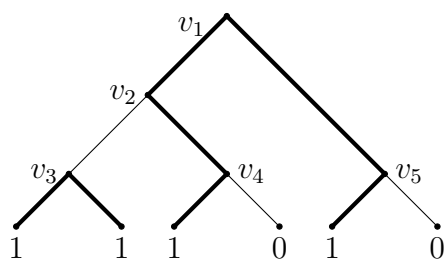
\begin{figure}
\begin{center}
\begin{subfigure}{.45\textwidth}
\begin{tikzpicture}[scale=.7]
\draw(0,0)--(4,4)--(8,0);
\draw(1,1)--(2,0);
\draw(2.5,2.5)--(5,0);
\draw(4,1)--(3,0);
\draw(7,1)--(6,0);
\draw[fill] (0,0) circle [radius=.05];
\node[below] at (0,0) {1};
\draw[fill] (2,0) circle [radius=.05];
\node[below] at (2,0) {1};
\draw[fill] (3,0) circle [radius=.05];
\node[below] at (3,0) {1};
\draw[fill] (5,0) circle [radius=.05];
\node[below] at (5,0) {0};
\draw[fill] (6,0) circle [radius=.05];
\node[below] at (6,0) {1};
\draw[fill] (8,0) circle [radius=.05];
\node[below] at (8,0) {0};
\draw[fill] (1,1) circle [radius=.05];
\draw[fill] (4,1) circle [radius=.05];
\draw[fill] (2.5,2.5) circle [radius=.05];
\draw[fill] (4,4) circle [radius=.05];
\draw[fill] (7,1) circle [radius=.05];
\node[left] at (.5,.5) {$e_3$};
\node[left] at (2,2) {$e_2$};
\node[left] at (3.5,3.5) {$e_1$};
\node[right] at (1.5,.5) {$e_4$};
\node[right] at (3,2) {$e_5$};
\node[left] at (3.5,.5) {$e_6$};
\node[right] at (4.5,.5) {$e_7$};
\node[right] at (4.5,3.5) {$e_8$};
\node[left] at (6.5,.5) {$e_9$};
\node[right] at (7.5,.5) {$e_{10}$};
\end{tikzpicture}
\caption{A leaf labeling of $T$ with elements of $\Z_2$ that sum to $0$}
\label{fig:paramexample}
\end{subfigure}\hfill\begin{subfigure}{.45\textwidth}
\begin{tikzpicture}[scale=.7]
\draw(0,0)--(4,4)--(8,0);
\draw(1,1)--(2,0);
\draw(2.5,2.5)--(5,0);
\draw(4,1)--(3,0);
\draw(7,1)--(6,0);
\draw[fill] (0,0) circle [radius=.05];
\node[below] at (0,0) {1};
\draw[fill] (2,0) circle [radius=.05];
\node[below] at (2,0) {1};
\draw[fill] (3,0) circle [radius=.05];
\node[below] at (3,0) {1};
\draw[fill] (5,0) circle [radius=.05];
\node[below] at (5,0) {0};
\draw[fill] (6,0) circle [radius=.05];
\node[below] at (6,0) {1};
\draw[fill] (8,0) circle [radius=.05];
\node[below] at (8,0) {0};
\draw[fill] (1,1) circle [radius=.05];
\node[left] at (1,1) {$v_3$};
\draw[fill] (4,1) circle [radius=.05];
\node[right] at (4,1) {$v_4$};
\draw[fill] (2.5,2.5) circle [radius=.05];
\node[left] at (2.5,2.5) {$v_2$};
\draw[fill] (4,4) circle [radius=.05];
\node[left] at (3.8,3.8) {$v_1$};
\draw[fill] (7,1) circle [radius=.05];
\node[right] at (7,1) {$v_5$};
\draw[ultra thick] (0,0) -- (1,1) -- (2,0);
\draw[ultra thick] (3,0) -- (4,1)-- (2.5,2.5) -- (4,4)--(7,1) -- (6,0);
\end{tikzpicture}
\caption{The path system associated to this labeling}
\label{fig:disjointpathsexample}
\end{subfigure}
\end{center}
\caption{The tree $T$ referenced in Example \ref{ex:monomialparam1}}
\end{figure}

\begin{example}\label{ex:monomialparam1}
Consider the tree $T$ in Figure \ref{fig:paramexample}. We compute
 $\hat{p}(1,1,1,0,1,0)$. Using Equation (\ref{eqn:fourierparam1}), we see that
\[
\hat{p}(g_1, \dots, g_n) = a_1^{e_1} a_0^{e_2} a_1^{e_3} a_1^{e_4} a_1^{e_5} a_1^{e_6} a_0^{e_7} a_1^{e_8} a_1^{e_9} a_0^{e_{10}}.
\]
\end{example}

Let $e_1, e_2, e_3$ be edges of rooted binary tree $T$ that are adjacent 
to a single node $v$, where $v = d(e_1)$ and 
$v = a(e_2) = a(e_3)$. Then $i(e_1) = i(e_2) + i(e_3)$, so 
$i(e_1) + i(e_2) + i(e_3) = 0$. In particular, this means that at any 
internal node, the edges adjacent to that node have an even number of 1's. 
Since the labels at the leaves of the tree sum to 0,
 if $e_1, e_2$ are the edges adjacent to the root, 
then $i(e_1) + i(e_2) = 0$. Therefore, to each labeling of the leaves of $T$ with elements of $\Z_2$ 
that sum to $0$  we may associate a set of disjoint paths, or path systems,  between leaves of $T$. 
 Furthermore, given a set of disjoint paths between leaves of $T$,
we obtain a labeling of the leaves that sums to 0 by assigning a
1 to each leaf included in one of the paths and a 0 elsewhere.
So labelings of the leaves of $T$ that sum to 0 and sets of disjoint paths between leaves of $T$
are in bijection with one another.

\begin{definition}
Let $\zeven$ denote the set of all labelings of the 
leaves of $T$ with elements of $\Z_2$ that sum to $0 $.
The \emph{path system} associated to a 
labeling $(i_1, \dots, i_n) \in \zeven$ is the unique 
set of paths in $T$ that connect the leaves of $T$ 
that are labeled with $1$ and do not use any of the same edges. 
We often denote a path system by $\P$.
\end{definition}

In this context, the edges for which $a_1^e$ appears in the parametrization (\ref{eqn:fourierparam1}) 
of $\hat{p}(i_1, \dots, i_n)$ for $(i_1, \dots, i_n) \in \zeven$ are exactly those that 
appear in the path system associated to $(i_1, \dots, i_n)$.

\begin{example}
The path system associated to the labeling 
$(1,1,1,0,1,0)$ from Example \ref{ex:monomialparam1} 
is pictured in Figure \ref{fig:disjointpathsexample}. Notice that the bold 
edges $e$ in $T$ are exactly those for which $a_1^e$ 
appears in the parametrization of $\hat{p}(1,1,1,0,1,0)$.
\end{example}

We now restrict the parametrization of the CFN model to trees that satisfy 
the molecular clock condition, which restricts us to a lower-dimensional 
subspace of the parameter space and provides a new combinatorial 
way of interpreting the Fourier coordinates. The molecular 
clock condition imposes that if $e_1, \dots, e_s$ and $f_1, \dots, f_r$ 
are two paths from an internal node $v$ to leaves descended from $v$, then 
$t_{e_1} + \dots + t_{e_s} = t_{f_1} + \dots + t_{f_s}$. 
On the level of transition matrices, this means that
\begin{align*}
M^{e_1} \dots M^{e_s} & = \exp(Qt_{e_1}) \dots \exp(Qt_{e_s}) \\
&= \exp(Q(t_{e_1} + \dots + t_{e_s})) \\
&= \exp(Q(t_{f_1} + \dots + t_{f_r}))\\
&=M^{f_1} \dots M^{f_r}.
\end{align*}

Since we can apply the Fourier transform to diagonalize the resulting matrices
on both sides of this equation, we see that the products of the new parameters
satisfy the identities:
\[
a_0^{e_1} \dots a_0^{e_s}  = a_0^{f_1} \dots a_0^{f_r} \quad \text{ and } \quad
a_1^{e_1} \dots a_1^{e_s}  = a_1^{f_1} \dots a_1^{f_r}.
\]

In particular, this means that we may define new parameters $a_0^v$ and $a_1^v$ for each internal node $v$ by
\[
a_i^v = a_i^{e_1} \dots a_i^{e_s}
\]
for $i = 0,1$ where $e_1, \dots, e_s$ is a path from $v$ to any 
leaf descended from $v$. Note that if $v$ is a leaf, then $a_i^v = 1$.
So we may exclude it from the parametrization,
and restrict to a parametrization by $a_0^v$ and $a_1^v$
where $v$ is an internal node.
Furthermore, note that for any edge $e_1$ and $i=0,1$,
we have the relations
\begin{align}\label{eqn:nodeparamrep}
a_i^{a(e_1)} \big(a_i^{d(e_1)})^{-1} & = a_i^{e_1} \dots a_i^{e_s} (a_i^{e_s})^{-1} \dots (a_i^{e_2})^{-1} \nonumber \\
&= a_i^{e_1},
\end{align}
where $e_1, \dots, e_s$ is a path from the ancestral node $a(e_1)$ to a leaf descended from $d(e_1)$.

Let $(i_1, \dots, i_n) \in \zeven$. Let $\P$ be the path system
 associated to $i_1, \dots, i_n$. Denote by $\text{Int}(T)$ 
the set of all internal nodes of tree $T$.

\begin{definition}
We say that $v$ is the \emph{top-most node} of a path in $\P$ if both 
of the edges descended from $v$ are in a path in $\P$. In other words, $v$ 
is the node of the path that includes it that is closest to the root. 
The \textit{top-set} Top$(i_1, \dots, i_n)$ is the set of all top-most nodes 
of paths in $\P$. The \textit{top-vector} is the vector in $\R^{\text{Int}(T)}$ 
with $v$ component equal to $1$ if $v \in \text{Top}(i_1, \dots, i_n)$ and $0$ otherwise. 
We denote the top-vector of a particular 
path system $\P$ by $[\P]$ or $\bx^{\P}$, depending upon the context.
\end{definition}

\begin{example}
Consider the path system associated to labeling $(1,1,1,0,1,0) \in \zeven$ 
pictured in Figure \ref{fig:disjointpathsexample}. The top-set of this path system
 is $\text{Top}(1,1,1,0,1,0) = \{ v_1, v_3 \}$. 
The top-vector is $(1,0,1,0,0) \in \R^{\text{Int}(T)}$.
\end{example}

For each internal node $v$, we define 
two new parameters $b_0^v$ and $b_1^v$ by
\begin{align}\label{eqn:bparams}
b_0^v &=\begin{cases} (a_0^v)^2 & \text{ if $v$ is the root, and} \\
a_0^v & \text{ otherwise}, \end{cases} \\ \nonumber
b_1^v &= \begin{cases} (a_1^v)^2 & \text{ if $v$ is the root, and} \\
(a_1^v)^2(a_0^v)^{-1} & \text{ otherwise}.
\end{cases}
\end{align}
We now rewrite the parametrization of
$\hat{p}(i_1,\dots,i_n)$ from Equation (\ref{eqn:fourierparam1})
  in terms of these new parameters
using Equation (\ref{eqn:nodeparamrep}).
Let $\P$ be the path system associated to $(i_1,\dots,i_n)$.
The choice of $b_0^v$ or $b_1^v$ in the new parametrization
will depend upon the position of $v$ in $\P$.

Let $v$ be an internal node of $T$ that is not the root.
Let $e_1, e_2, e_3$ be the edges adjacent to $v$
so that $v = d(e_1)$ and $v = a(e_2) = a(e_3)$.
By Equation (\ref{eqn:nodeparamrep}), $a_i^{e_1}$, $a_i^{e_2}$ and $a_i^{e_3}$
are the only parameters in which some $a_0^v$ or $a_1^v$ appear.

If $v$ is not in any path in $\P$, then $\P$ does not
use edges $e_1, e_2$ or $e_3$. So the factors of $\hat{p}(i_1,\dots,i_n)$
in Equation (\ref{eqn:fourierparam1}) associated to these edges are
\begin{align*}
a_0^{e_1} a_0^{e_2} a_0^{e_3} &= a_0^{a(e_1)} (a_0^v)^{-1} \cdot a_0^v (a_0^{d(e_2)})^{-1} \cdot a_0^v (a_0^{d(e_3)})^{-1} \\
&= a_0^{a(e_1)}(a_0^{d(e_2)})^{-1}(a_0^{d(e_3)})^{-1} \cdot (a_0^v)^2 (a_0^v)^{-1} \\
&= a_0^{a(e_1)}(a_0^{d(e_2)})^{-1}(a_0^{d(e_3)})^{-1} \cdot a_0^v, \\
\end{align*}
by Equation (\ref{eqn:nodeparamrep}).
So if $v$ is not in any path in $\P$,
$a_0^v$ is the only factor of $\hat{p}(i_1,\dots,i_n)$ involving $v$.

Similarly, consider the case where $v$ is in a path in $\P$, 
but it is not the top-most node of a path in $\P$.
Then this path includes $e_1$, and without loss of generality,
we may assume it includes $e_2$ and not $e_3$.
Then the factors of $\hat{p}(i_1,\dots,i_n)$ involving $e_1, e_2$ and $e_3$ are
\begin{align*}
a_1^{e_1} a_1^{e_2} a_0^{e_3} &= a_1^{a(e_1)} (a_1^v)^{-1} \cdot a_1^v (a_1^{d(e_2)})^{-1} \cdot a_0^v (a_0^{d(e_3)})^{-1} \\
&= a_1^{a(e_1)}(a_1^{d(e_2)})^{-1}(a_0^{d(e_3)})^{-1} \cdot (a_0^v) (a_1^v) (a_1^v)^{-1} \\
&= a_0^{a(e_1)}(a_0^{d(e_2)})^{-1}(a_0^{d(e_3)})^{-1} \cdot a_0^v, \\
\end{align*}
by Equation (\ref{eqn:nodeparamrep}).
So in this case, we have again that
$a_0^v$ is the only factor of $\hat{p}(i_1,\dots,i_n)$ involving $v$.

Finally, consider the case where $v$ is the top-most node of a path in $\P$.
Then this path includes $e_2$ and $e_3$ but not $e_1$.
Then the factors of $\hat{p}(i_1,\dots,i_n)$ involving $e_1, e_2$ and $e_3$ are
\begin{align*}
a_0^{e_1} a_1^{e_2} a_1^{e_3} &= a_0^{a(e_1)} (a_0^v)^{-1} \cdot a_1^v (a_1^{d(e_2)})^{-1} \cdot a_1^v (a_1^{d(e_3)})^{-1} \\
&= a_0^{a(e_1)}(a_1^{d(e_2)})^{-1}(a_1^{d(e_3)})^{-1} \cdot (a_0^v)^{-1} (a_1^v)^2 
\end{align*}
by Equation (\ref{eqn:nodeparamrep}). So in this case,
$(a_0^v)^{-1} (a_1^v)^2$ is exactly the factor of $\hat{p}(i_1,\dots,i_n)$ involving $v$.

An analogous argument shows that when $v$ is the root,
the factor of $\hat{p}(i_1,\dots,i_n)$ involving $v$ is $(a_0^v)^2$ 
when $v$ is not the top-most node of a path in $\P$,
and $(a_1^v)^2$ when $v$ is the top-most node of a path in $\P$.
Note that it can never be the case that the root is in a path and is not the top-most node.

So the new parameters $b_0^v$ and $b_i^v$ for each internal node $v$ of $T$
allow us to rewrite the parametrization in Equation (\ref{eqn:fourierparam1}) as
\begin{equation}\label{eqn:fourierparam2}
\hat{p}(i_1, \dots, i_n) = \prod_{v \in \text{Top}(i_1, \dots, i_n)} b_1^v \times \prod_{v \in \bar{\text{Top}(i_1, \dots, i_n)}} b_0^v,
\end{equation}
where $ \bar{\text{Top}(i_1, \dots, i_n)}$ denotes the compliment of
$\text{Top}(i_1,\dots,i_n)$ in the set of all internal nodes of $T$.

%

\begin{example}\label{ex:monomialparam2}
Consider the parametrization of $\hat{p}(1,1,1,0,1,0)$ given in 
Example \ref{ex:monomialparam1} for the tree $T$ pictured in \ref{fig:paramexample}. 
First, we verify the identity in Equation (\ref{eqn:nodeparamrep}) for 
$a_1^{e_1}$. We have that $a(e_1) = v_1$ and $d(e_1) = v_2$. We have defined 
$a_1^{v_1} = a_1^{e_1} a_1^{e_2} a_1^{e_3}$ and $a_1^{v_2} = a_1^{e_2} a_1^{e_3}$. Therefore
\begin{align*}
a_1^{v_1} (a_1^{v_2})^{-1} & = a_1^{e_1} a_1^{e_2} a_1^{e_3} (a_1^{e_3})^{-1} (a_1^{e_2})^{-1} \\
&= a_1^{e_1}.
\end{align*} 
Note that while the choices of paths from $v_1$ and $v_2$ to leaves descended 
from them was not unique, the molecular clock condition implies that the 
above holds for any such choice of paths.

Substituting the identities in Equation (\ref{eqn:nodeparamrep}) into $\hat{p}(1,1,1,0,1,0)$, 
and applying the fact that if $l$ is a leaf of $T$ then $a_i^l = 1$ for $i=0,1$ yields
\begin{align*}
\hat{p}(1,1,1,0,1,0) &= a_1^{v_1} (a_1^{v_2})^{-1} a_0^{v_2} (a_0^{v_3})^{-1} a_1^{v_3} a_1^{v_3}  a_1^{v_2} (a_1^{v_4})^{-1} a_1^{v_4} a_0^{v_4}  a_1^{v_1} (a_1^{v_5})^{-1} a_1^{v_5} a_0^{v_5} \\
&= (a_1^{v_1})^2 a_0^{v_2} (a_1^{v_3})^2 (a_0^{v_3})^{-1} a_0^{v_4} a_0^{v_5}.
\end{align*}

Substituting the new parameters, $b_0^v$ and $b_1^v$ as defined in Equation (\ref{eqn:bparams}) yields
\[
\hat{p}(1,1,1,0,1,0) = b_1^{v_1} b_0^{v_2} b_1^{v_3} b_0^{v_4} b_0^{v_5},
\]
as needed.
\end{example}

Note that
two labelings of the leaves with group elements $(i_1, \dots, i_n)$ and $(j_1, \dots, j_n)$
 have the same top-sets if and only if $\hat{p}(i_1, \dots, i_n) = \hat{p}(j_1, \dots, j_n)$. 
 Therefore, Equation (\ref{eqn:fourierparam2})
 allows us to define new coordinates that are indexed by valid top-sets of 
 path systems in $T$. These coordinates are in the polynomial ring 
\[
\K[\ur] := \K [ r_{k_1, \dots, k_{n-1}} : (k_1, \dots, k_{n-1}) = 
[\P] \text{ for some path system } \P]
\]
where $(i_1, \dots, i_n)$ ranges over all elements of $\zeven$. 
By applying this change of coordinates, we effectively quotient by the 
linear relations among the $\hat{p}$ coordinates that arise from the 
fact that their parametrizations in terms of the $b_i^v$ parameters are equal.
This restricts our attention to equivalences classes of labelings in $\zeven$ with the same top-sets.

\begin{definition}\label{def:CFNMCideal}
Label the internal nodes of $T$ with $v_1, \dots, v_{n-1}$. 
The \textit{CFN-MC ideal} $I_T$ is the kernel of the map
\begin{align*}
\K[\ur] & \longrightarrow \K[b_i^v \mid i = 0,1, v \in \text{Int}(T)] \\
r_{k_1, \dots, k_{n-1}} &\longmapsto \prod_{i=1}^{n-1} b_{k_{i}}^{v_i},
\end{align*}
where $(k_1, \dots, k_{n-1})$ ranges over all indicator vectors 
corresponding to top-sets of path systems in $T$.
\end{definition}

Note that the polynomials in the ideal $I_T$ evaluate to zero for every choice of parameters
in the CFN-MC model for the tree $T$.  In particular, these polynomials are \emph{phylogenetic
invariants} of the CFN-MC model.
Another important observation is that $I_T$ is the kernel of a monomial map. 
This implies that $I_T$ is a \textit{toric ideal} and can be analyzed from
a combinatorial perspective.

\begin{definition}
A \emph{toric ideal} is the kernel of a monomial map. 
Equivalently, it is a prime ideal that is generated by binomials.
\end{definition} 

To every monomial map $\K[x_1, \dots, x_m] \rightarrow \K[y_1, \dots, y_d]$, we can associate a $d \times m$ integer matrix.
The entry in the $(i,j)$th position of this matrix is the exponent of $y_j$ in the image of $x_i$ under this map.

Background on toric ideals can be found beginning in Chapter 4 of \cite{sturmfels1996}. 
Some applications of toric ideals to phylogenetics are detailed in \cite{sturmfels2005}.

An equivalent way to define the CFN-MC ideal is as the kernel of the map
\begin{align}\label{eqn:finalparametrization}
\K[\ur] & \longrightarrow \K[t_0, \dots, t_{n-1}] \nonumber \\
r_{k_1, \dots, k_{n-1}} &\longmapsto t_0 \prod_{k_i = 1} t_i,
\end{align}
where $t_0$ is a homogenizing indeterminate.  
Note that these indeterminates $t_i$ are not related to the branch lengths in $T$.
From this perspective, we define
the matrix $A_T$ associated to this monomial map to be the matrix whose 
columns are the indicator vectors of top-sets of path systems
 in $T$ with an added homogenizing row of ones. 
The convex hull of these indicator vectors gives a polytope in 
$\R^{n-1}$ that encodes important information about the ideal $I_T$ \cite[Chapter~4]{sturmfels1996}.
Our goal in this paper is to study the ideals $I_T$ for binary trees and
the corresponding polytopes $R_T$ (to be defined in detail in Section \ref{sec:polytope}).

\begin{example}\label{ex:parametrization}
Let $T$ be the tree pictured in Figure \ref{fig:treeex}. The CFN-MC ideal $I_T$ is in the polynomial ring
$\K[\ur] = \K[r_{0000},r_{1000},r_{0100},r_{0010},r_{0001},r_{1010},r_{1001},r_{0011}]$
where each subscript is the indicator vector of a top-set of a path system in $T$
indexed alphabetically by the internal nodes of $T$.
Therefore, the parametrization in Equation (\ref{eqn:finalparametrization}) is given by

\begin{center}
\begin{minipage}{.2\textwidth}
\begin{align*}
r_{0000} &\mapsto t_0 \\
r_{1000} &\mapsto t_0t_a 
\end{align*}
\end{minipage}\begin{minipage}{.2\textwidth}
\begin{align*}
r_{0100} &\mapsto t_0 t_b \\
r_{0010}&\mapsto t_0 t_c
\end{align*}
\end{minipage}\begin{minipage}{.2\textwidth}
\begin{align*}
r_{0001} &\mapsto t_0 t_d \\
r_{1010} &\mapsto t_0 t_a t_c
\end{align*}
\end{minipage}\begin{minipage}{.2\textwidth}
\begin{align*}
r_{1001} &\mapsto t_0 t_a t_d\\
r_{0011} &\mapsto t_0 t_c t_d.
\end{align*}
\end{minipage}
\end{center}

The matrix $A_T$ associated to this monomial map is obtained by taking its columns
to be all of the subscripts of an indeterminate in $\K[\ur]$ and adding a
homogenizing row of ones. In this case, this matrix is
\[
A_T = \begin{bmatrix}
1 & 1 & 1 & 1 & 1 & 1 & 1 & 1\\
0 & 1 & 0 & 0 & 0 & 1 & 1 & 0\\
0 & 0 & 1 & 0 & 0 & 0 & 0 & 0\\
0 & 0 & 0 & 1 & 0 & 1 & 0 & 1\\
0 & 0 & 0 & 0 & 1 & 0 & 1 & 1
\end{bmatrix}.
\]

We can write $I_T$ implicitly from its parametrization using standard elimination 
techniques \cite[Algorithm~4.5]{sturmfels1996}. This ideal is generated by the binomials
\begin{eqnarray*}
r_{0000}r_{0011} - r_{0010}r_{0001}   & \quad  & r_{0000}r_{1010} - r_{1000}r_{0010}\\
r_{1000}r_{0011} - r_{1010}r_{0001} &   &  r_{0000}r_{1001} -r_{1000}r_{1010}\\
r_{1000}r_{0011} - r_{0010}r_{1001} &  &  r_{0010}r_{1001} -r_{1010}r_{0001}.
\end{eqnarray*}

In fact, these are exactly the binomials described in the proof of Proposition \ref{prop:liftable}; 
in this setting, the first column are the elements of the 	``Lift" set and the second column are the 
elements of the ``Swap" set. 
\end{example}

We conclude this section by remarking
 that the combinatorial interpretation for the parametrization of the Fourier coordinates
described in this section relies upon having a model with only two states.
One starting point for future work towards applying the molecular clock condition
to models with three or more states may be to give a comparable combinatorial description
of the non-zero Fourier coordinates in these models. 


\section{The CFN-MC Polytope} \label{sec:polytope}

In this section we give a description of the combinatorial
structure of the polytope associated to the CFN-MC model.
In particular, we show that the number of vertices of the
CFN-MC polytope is a Fibonacci number and we give a
complete facet description of the polytope.  One interesting
feature of these polytopes is that while the facet structure
varies widely depending on the structure of the tree (e.g.~some
trees with $n$ leaves have exponentially many facets, while others
only have linearly many facets), the number of vertices is
fixed.  Similarly, we will see in Section \ref{sec:ehrhart} that
the volume also does not depend on the number of leaves.

Let $T$ be a rooted binary tree on $n$ leaves. For any path system 
$\P$ in $T$, let $\bx^{\P} \in \R^{n-1}$ have $i$th component $x^{\P}_i = 1$ 
if $i$ is the highest internal node in some path in $\P$ and $x^{\P}_i = 0$ otherwise. 
Hence $\bx^{\P}$ is the top-vector of $\P$ as discussed in the
previous section.

\begin{definition}
Let $T$ be a rooted binary tree on $n$ leaves.
The \emph{CFN-MC polytope} $R_T$ is 
the convex hull of all $\bx^{\P}$ for $\P$ a path system in $T$. 
\end{definition}

\begin{example}
For the tree in Figure \ref{fig:treeex}, the polytope $R_T$ is the
 convex hull of the column vectors of the matrix $A_T$ in Example \ref{ex:parametrization}.
 \end{example}
 
 We note that the convex hull of the column vectors of $A_T$ is
 actually a subset of the hyperplane $\{ \bx \in \R^n \mid x_0 = 1\}$. To obtain $R_T$, we
 identify this hyperplane with $\R^{n-1}$ by deleting the first coordinate. We write
 $\text{conv}(A_T)$ to mean the convex hull of the column vectors of $A_T$ after we have
 deleted the first coordinate.

Recall that the \emph{Fibonacci numbers} are defined by the recurrence $F_n = F_{n-1} + F_{n-2}$ subject to initial conditions $F_0 = F_1 = 1$.

\begin{prop}\label{thm:fibonacci}
Let $T$ be a rooted binary tree on $n \geq 2$ leaves. The number of vertices of $R_T$ is $F_n$, 
the $n$-th Fibonacci number.
\end{prop}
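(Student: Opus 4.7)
The plan is to identify the vertices of $R_T$ with distinct top-sets of disjoint path systems in $T$, and then count those top-sets inductively using an auxiliary quantity. First I would observe that every $\bx^{\P}$ is a $0/1$-vector; since a $0/1$-vector lying in the convex hull of a finite set of $0/1$-vectors must coincide with one of them (check coordinatewise: a coordinate equal to $0$ forces every contributing vector to have a $0$ there, and analogously for $1$), the vertex set of $R_T$ is exactly $\{\bx^{\P} : \P \text{ a disjoint path system}\}$ and has size equal to the number of distinct top-sets $\mathrm{Top}(\P) \subseteq \mathrm{Int}(T)$.

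Next I would set up a recursion on rooted subtrees. For a rooted binary tree $T'$ with root $r'$ and $n'$ leaves (allowing $n' = 1$), let $f(T')$ denote the number of distinct top-sets achievable by a disjoint path system in $T'$, and let $g(T')$ denote the number of distinct top-sets achievable by an \emph{extended} system consisting of a half-path from some leaf of $T'$ up to $r'$ together with any edge-disjoint collection of regular paths in $T'$ (the half-path itself contributes no top). With the leaf base case $f = g = 1$ and $T'$ having subtrees $T_L, T_R$, I would establish
\[
f(T') = f(T_L)f(T_R) + g(T_L)g(T_R)
\]
by splitting on whether $r' \in \mathrm{Top}$: if not, the two subtrees contribute independent top-sets; if so, the unique path through $r'$ restricts on each side to a half-path ending at the subtree root, so the two sides yield extended top-sets.

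For $g(T')$ the same kind of split, this time on which subtree contains the half-path, identifies extended top-sets of $T'$ with pairs in $(G(T_L) \times F(T_R)) \cup (F(T_L) \times G(T_R))$, where $F(\cdot), G(\cdot)$ denote the corresponding sets of top-sets. The key structural observation is $G(T') \subseteq F(T')$ for every $T'$: deleting the half-path from an extended system yields a regular system with the same top-set. Hence the intersection of the two cases collapses to $G(T_L) \times G(T_R)$, and inclusion-exclusion produces
\[
g(T') = g(T_L)f(T_R) + f(T_L)g(T_R) - g(T_L)g(T_R).
\]

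Finally, by induction on $n'$ I would prove $f(T') = F_{n'}$ and $g(T') = F_{n'-1}$, with the leaf case immediate. The inductive step reduces to the Fibonacci convolution identity $F_a F_b + F_{a-1} F_{b-1} = F_{a+b}$ (in the paper's indexing $F_0 = F_1 = 1$) for $f$, and to the same identity for $g$ after applying $F_b - F_{b-1} = F_{b-2}$ to simplify the subtracted term; specializing to $T' = T$ yields $|\mathrm{vert}(R_T)| = F_n$. The main obstacle is the recursion for $g$: the containment $G \subseteq F$ is what makes the intersection in the inclusion-exclusion collapse cleanly, allowing the coupled recursions to align with the Fibonacci identities rather than devolving into bookkeeping about which top-sets admit which kinds of extensions.
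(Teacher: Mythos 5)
Your proposal is correct, and it takes a genuinely different route from the paper's proof. The paper's argument is a bottom-up induction on a cherry: it picks a cherry node $a$, partitions the vertices of $R_T$ by their $a$-coordinate, maps the $a$-coordinate-$0$ vertices to $R_{T'}$ (contract the cherry to a single leaf) and the $a$-coordinate-$1$ vertices to $R_{T''}$ (delete the cherry and suppress the grandparent), and directly reads off the Fibonacci recurrence $F_n = F_{n-1} + F_{n-2}$. Your argument instead decomposes top-down at the root into the two subtrees, tracks the pair $\bigl(f(T'),g(T')\bigr)$ counting ordinary versus ``extended'' top-sets, and reduces to the Fibonacci convolution $F_aF_b + F_{a-1}F_{b-1} = F_{a+b}$ (which does hold in the paper's indexing $F_0 = F_1 = 1$; it is the shifted form of the standard convolution identity). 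Both arguments are sound. What the paper's approach buys is brevity: it only needs one quantity, one recurrence, and no inclusion--exclusion. What your approach buys is canonicity and extra structure: it does not depend on a choice of cherry, the auxiliary pair $(f,g)$ exposes a refinement of the vertex count by subtree, and --- a small but real virtue --- you make explicit the step that $0/1$-vectors in the convex hull of $0/1$-vectors are automatically vertices, which the paper's proof uses silently when it identifies vertices of $R_T$ with top-vectors. The containment $G(T') \subseteq F(T')$ that collapses your inclusion--exclusion term is correctly justified and is indeed the load-bearing observation; without it the $g$-recursion would not close. One minor point worth spelling out in a final write-up: you implicitly use that every pair $(S_L, S_R) \in F(T_L)\times F(T_R)$ (resp.\ $G(T_L)\times G(T_R)$) is jointly realizable by a single disjoint path system in $T'$; this holds because the restrictions to $T_L$ and $T_R$ are edge-disjoint automatically and the two half-paths glue at $r'$, but it should be stated rather than assumed.
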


\begin{proof}
We proceed by induction on $n$. For the base cases, we note that
if $T$ is the 2-leaf tree, then $R_T = \text{conv}\begin{bmatrix} 0 & 1 \end{bmatrix}$, and
if $T$ is the 3-leaf tree, then $R_T = \text{conv}\begin{bmatrix} 0 & 1 & 0 \\ 0 & 0 & 1 \end{bmatrix}$.

Let $T$ be an $n$-leaf tree with $n \geq 4$. Let $l_1$ and $l_2$ be 
leaves of $T$ that are adjacent to the same internal node $a$, so that $a$ is a cherry node. 
Leaves $l_1$ and $l_2$ exist because every rooted binary tree with $n \geq 2$ leaves has a cherry.

Let $T'$ be the tree obtained from $T$ by deleting leaves $l_1$ and $l_2$
and their adjacent edges so that $a$ becomes a leaf. 
If $\P$ is a path system
 in $T$ with $a \notin \text{Top}(\P)$, then we can realize 
the top-vector of $\P$ without the $a$-coordinate as the top-vector of a 
path system in $T'$. Furthermore, any path system in 
$T'$ can be extended to a path system in $T$ without $a$ in its top-set. 
So the number of vertices of $R_T$ with $a$-coordinate equal to $0$ 
is the number of vertices of $R_{T'}$, which is $F_{n-1}$ by induction.

Let $a'$ be the direct ancestor of $a$ in $T$. 
Let $T''$ be the tree obtained from $T$ by deleting $l_1, l_2$ and $a$, and all edges adjacent to $a$, and merging the two remaining edges incident to $a'$ so that $a'$ is no longer a node. 
In the case where $a'$ is the root of $T$,
we simply delete it and its other incident edge to form $T''$.
If $\P$ is a path system  
in $T$ with $a \in \text{Top}(\P)$, then note that $a' \notin \text{Top}(\P)$. 
Furthermore, edge $aa'$ is not an edge in any path in $\P$. Therefore, we can realize 
the top-vector of $\P$ without the $a$- and $a'$-coordinates as the top-vector 
of a path system in $T''$. Furthermore, any path system in $T''$ 
can be extended to a path system in $T$ with $a$ in its top-set. 
So the number of vertices of $R_T$ with $a$-coordinate equal to $1$ is the 
number of vertices of $R_{T''}$, which is $F_{n-2}$ by induction.

Therefore, the total number of vertices of $R_T$ is $F_{n-2} + F_{n-1} = F_n$, as needed.
\end{proof}

In order to give a facet description of the CFN-MC polytope of a tree, 
we define several intermediary polytopes between the CFN polytope and 
the CFN-MC polytope, along with linear maps between them. 
The CFN polytope is the analogue of the CFN-MC polytope for the CFN model;
 it is obtained by taking the convex hull of the indicator vectors of path systems $\P$
in $T$ indexed by the \emph{edges} in the path system.
We will
trace the known description of the facets of the CFN polytope through 
these linear maps via Fourier-Motzkin elimination to arrive at the facet 
description of the CFN-MC polytope.  See \cite[Chapter 1]{ziegler1995} for
background on Fourier-Motzkin elimination.

Let $T $ be a rooted binary tree with $n$ leaves, oriented with 
the root as the highest node and the leaves as the lowest nodes. 
Then $T$ has $n-1$ internal nodes.  The internal nodes will now
be labeled  by $1, \ldots , n-1$. 

\begin{remark}
For the remainder of the paper, we have changed the convention of labeling the 
nodes so that $1, \ldots , n-1$ label the internal nodes of the
trees, whereas in Section \ref{sec:CFNMC} we used $1, \ldots , n$ to denote the leaves.
This is because of the importance that the internal nodes now play in the
combinatorics of the CFN-MC model, whereas in Section \ref{sec:CFNMC} the leaves were
the main objects of interest in analyzing and simplifying the parametrization.
\end{remark}

Let $v$ be a non-root node  in $T$. Denote by $e(v)$ the unique edge that 
has $v = d(e(v))$.  
Introduce a poset $\Int(T)$
whose elements are the internal nodes of $T$ and with relations $v \leq w$ 
if $v$ is a descendant of $w$.
The Haase diagram of $\Int(T)$ is the tree $T$
with leaf and edge incident to a leaf removed.
Recall that an order ideal of $\Int(T)$ is a subset of $\Int(T)$
that is downwards closed.
Let $I$ be an order ideal of $\Int(T)$ with $s$ elements.   Then the number of 
edges not below an element of $I$ is $2(n-s-1)$, since $T$ 
has $2n-2$ edges and each node in $I$ has exactly two edges directly beneath it. 

\begin{definition}
Let $T$ be a tree, $\Int(T)$ the associated poset, and $I$ an order ideal of $\Int(T)$.
Denote by $T-I$ the tree obtained by removing all nodes and edges 
descended from any node in $I$. The \textit{edge set } of $T - I$, denoted
$\Ecal ( T - I)$, is the set of all edges in $T$ that are not descended 
from an element of $I$. Notice $T-I$ includes all maximal nodes of $I$, 
and all edges that join a node in $I$ with an internal node outside of $I$.
\end{definition}

\begin{definition}
Let $\P$ be a path systems between leaves of $T$. 
Let $[\bx, \by]^{\P}_I$ be the point in $\R^I \oplus \R^{ \Ecal(T - I)}$ defined by
\[
x_i = \begin{cases}
1 & \text{ if } i \text{ is the top-most node in some path in } \P \\
0 & \text{ otherwise, }
\end{cases}
\]
for all $i \in I$ and
\[
y_j = \begin{cases}
1 & \text{ if } e(j) \text{ is an edge in some path in } \P \\
0 & \text{ otherwise, } 
\end{cases}
\]
for all $e(j) \in \Ecal(T - I)$. The polytope $R_T(I)$ is the convex hull of 
all $[ \bx, \by ]^{\P}_I$ for all path systems 
$\P$ in $T$. If $I$ is the set of all internal nodes 
of $T$, then this polytope is exactly $R_T$, and $[ \bx, \by ]^{\P}_I = \bx^{\P} = [\P]$.
\end{definition}

\begin{example} \label{ex:4leaf} Let $T$ be the 4-leaf tree pictured in Figure \ref{fig:4leafexample}.
Let the distinguished order ideal in the set of internal nodes of $T$ be $I = \{ 3 \}$. Then $R_T(I)$ is the convex hull of the following 6 vertices with coordinates corresponding to the labeled edge or node.

\[
\begin{blockarray}{ccccccc}
\begin{block}{c(cccccc)}
e(2) & 0 & 1 & 1 & 0 & 0 & 0 \\
e(3) & 0 & 1 & 1 & 0 & 0 & 0 \\
e(4) & 0 & 1 & 0 & 1 & 0 & 1 \\
e(5) & 0 & 0 & 1 & 1 & 0 & 1 \\
3 & 0 & 0 & 0 & 0 & 1 & 1 \\
\end{block}
\end{blockarray}
\]

The dotted lines in Figure \ref{fig:equivalentpaths} shows the paths through $T$ that realize the vertex $\begin{bmatrix} 1 & 1 & 1 & 0 & 0 \end{bmatrix}^{\text{T}}$ in $R_T(I)$.

\begin{figure}
\begin{subfigure}{.3\textwidth}
\begin{tikzpicture}[scale=.87]
\draw(0,0)--(2.5,2.5)--(5,0);
\draw(1,1)--(2,0);
\draw(4,1)--(3,0);
\draw[fill](0,0) circle [radius=.05];
\draw[fill](2,0) circle [radius=.05];
\draw[fill](3,0) circle [radius=.05];
\draw[fill](5,0) circle [radius=.05];
\draw[fill](2.5,2.5) circle [radius=.05];
\node[above] at (2.5,2.5) {1};
\draw[fill](1,1) circle [radius=.05];
\node[left] at (1,1) {2};
\draw[fill](4,1) circle [radius=.05];
\node[right] at (4,1) {3};
\draw[fill](4,1) circle [radius=.05];
\node[below] at (0,0) {4};
\node[below] at (2,0) {5};
\node[below] at (3,0) {6};
\node[below] at (5,0) {7};
\node[left] at (2.25,2.25) {$e(2)$};
\node[right] at (2.75,2.25) {$e(3)$};
\node[left] at (.5,.5) {$e(4)$};
\node[right] at (1.5,.5) {$e(5)$};
\node[left] at (3.5,.5) {$e(6)$};
\node[right] at (4.5,.5) {$e(7)$};
\end{tikzpicture}
\caption{Leaf and edge labels in the tree $T$}\label{fig:4leaf}
\end{subfigure} \hfill \begin{subfigure}{.6\textwidth}
\begin{center}
\begin{tikzpicture}[scale=.87]
\draw[dashed] (0,0)--(2.5,2.5)--(5,0);
\draw(1,1)--(2,0);
\draw(4,1)--(3,0);
\draw[fill](0,0) circle [radius=.05];
\draw[fill](2,0) circle [radius=.05];
\draw[fill](3,0) circle [radius=.05];
\draw[fill](5,0) circle [radius=.05];
\draw[fill](2.5,2.5) circle [radius=.05];
\draw[fill](1,1) circle [radius=.05];
\draw[fill](4,1) circle [radius=.05];
\draw[fill](4,1) circle [radius=.05];
\node[above] at (2.5,2.5){\quad};
\node[below] at (0,0) {\quad};
\end{tikzpicture}  \begin{tikzpicture}[scale=.87]
\draw[dashed] (0,0)--(2.5,2.5)--(4,1);
\draw[dashed](4,1)--(3,0);
\draw(4,1)--(5,0);
\draw(1,1)--(2,0);
\draw[fill](0,0) circle [radius=.05];
\draw[fill](2,0) circle [radius=.05];
\node[below] at (2,0) {};
\draw[fill](3,0) circle [radius=.05];
\draw[fill](5,0) circle [radius=.05];
\draw[fill](2.5,2.5) circle [radius=.05];
\draw[fill](1,1) circle [radius=.05];
\draw[fill](4,1) circle [radius=.05];
\draw[fill](4,1) circle [radius=.05];
\node[below] at (0,0) { };
\end{tikzpicture}
\end{center}
\caption{Two paths in $T$ that correspond to the same vertex of $R_T(I)$.}\label{fig:equivalentpaths}
\end{subfigure}
\caption{The four-leaf tree in Example \ref{ex:4leaf}}\label{fig:4leafexample}
\end{figure}
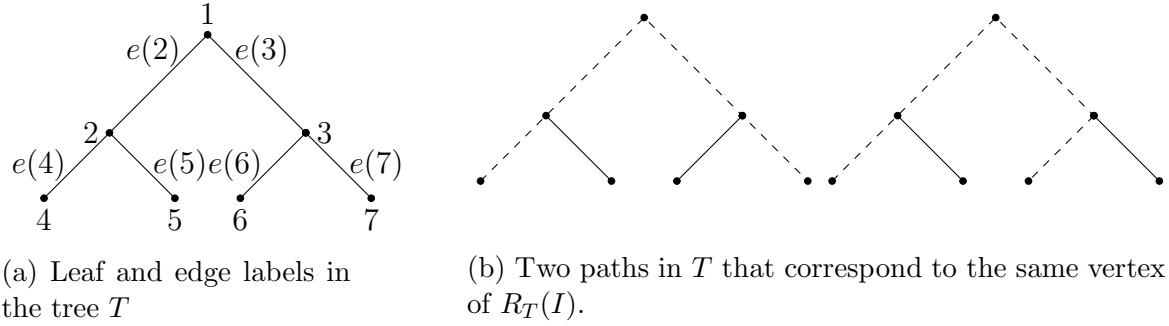

\end{example}

For any order ideal $I$ with maximal node $r$, let $a$ and $b$ be the 
direct
descendants of $r$. This scenario is pictured in Figure \ref{fig:localfoldingpicture}. Then we can define a linear map
\[
\phi_{I, r} : \R^{ ( I - \{r \} ) }\oplus \R^{  \Ecal (T - (I - \{ r\} ) ) } 
\rightarrow \R^{I} \oplus \R^{ \Ecal(T - I)},
\]
sending $\phi_{I,r} \big( (\bx ' , \by ') \big) = (\bx, \by)$ where
\[
\begin{cases}
x_i = x_i' & \text{ if } i \in I - \{r\} \\
y_j = y_j' & \text{ if } e(j) \in \Ecal (T - I) \\
x_r = \frac{-y_r ' + y_a' + y_b'}{2}.

\end{cases}
\]
Note that if $r$ is the root, then $y_r '$ is undefined. So we interpret the formula for $x_r$
as if $y_r' = 0$, and we have
$x_r = \frac{  y_a ' + y_b '}{ 2}$. But in this case, $R_{I - \{ r \} }$ lies in 
the hyperplane defined by $y_a ' = y_b '$. So $x_r = y_a' = y_b'$.
Here, $\bx '$ has elements indexed by nodes 
in $I - \{ r \}$ and $\by '$ has elements indexed by nodes in 
$\Ecal (T - (I - \{ r \} ) )$. Then $\bx$ has elements 
indexed by nodes in $I$ and $\by$ has elements index by nodes in $\Ecal(T - I)$. 

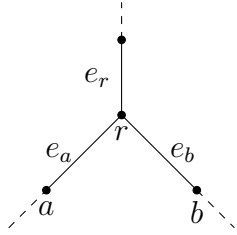
\begin{figure}[h]
\centering
\begin{tikzpicture}
\draw(1,1) -- (2,2);
\draw (2,2) -- (3,1);
\draw (2,2) -- (2,3);
\draw [dashed] (.5, .5) -- (1,1);
\draw[dashed] (3,1) -- (3.5, .5);
\draw[dashed] (2,3) -- (2, 3.5);
\draw[fill] (1,1) circle [radius = .05];
\node[below] at (1,1) {$a$};
\draw[fill] (2,2) circle [radius = .05];
\node[below] at (2,2) {$r$};
\draw[fill] (3,1) circle [radius = .05];
\node[below] at (3,1) {$b$};
\draw[fill] (2,3) circle [radius = .05];
\node[left] at (1.5, 1.5) {$e_a$};
\node[right] at (2.5, 1.5){$e_b$};
\node[left] at (2, 2.5) {$e_r$};
\end{tikzpicture}
\caption{The edges and nodes surrounding node $r$.} \label{fig:localfoldingpicture}
\end{figure}

\begin{prop}
The function $\phi_{I,r}$ maps  $R_T(I - \{ r \})$ onto $R_T(I)$.
\end{prop}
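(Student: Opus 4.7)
The plan is to reduce the claimed surjection to a vertex-by-vertex calculation and invoke the elementary principle that a linear map sends a convex hull to the convex hull of the images. Both $R_T(I-\{r\})$ and $R_T(I)$ are by definition convex hulls of vectors indexed by the same set of disjoint path systems $\P$ in $T$, so it suffices to verify that for each $\P$,
\[
\phi_{I,r}\bigl([\bx',\by']^{\P}_{I-\{r\}}\bigr)=[\bx,\by]^{\P}_{I}.
\]
Granted this, the identity $\phi_{I,r}(R_T(I-\{r\}))=R_T(I)$ is immediate.

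To carry out the check I would separate the three types of coordinate in the target. For $i\in I-\{r\}$ the formula says $x_i=x_i'$, and both sides are the indicator that $i$ is the top-most node of some path in $\P$, so these agree automatically. For $e(j)\in \Ecal(T-I)$ the formula says $y_j=y_j'$; here the two sides are the indicator that $e(j)$ belongs to a path of $\P$, and the only piece of bookkeeping is the inclusion $\Ecal(T-I)\subseteq \Ecal(T-(I-\{r\}))$, with the ``extra'' edges in the source being precisely $e(a)$ and $e(b)$ (which are absorbed into the new coordinate $x_r$). The nontrivial verification is the formula $x_r=\tfrac{1}{2}(-y_r'+y_a'+y_b')$, which must recover the indicator that $r$ is top-most in some path of $\P$. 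A three-way case analysis handles this: if $r$ belongs to no path in $\P$, then $y_r'=y_a'=y_b'=0$ and the expression gives $0$; if $r$ is the top-most node of a path, that path uses both edges $e(a)$ and $e(b)$ but does not continue past $r$, so $y_a'=y_b'=1$ and $y_r'=0$, giving $1$; if $r$ lies on a path but is not top-most, then the path passes through $r$ vertically, using $e(r)$ and exactly one of $e(a),e(b)$, so the expression evaluates to $0$. In every case the answer matches the required indicator.

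The case in which $r$ is the root of $T$ is handled separately with the modified formula $x_r=\tfrac{1}{2}(y_a'+y_b')$. Because any path reaching the root must descend through both of its two incident edges, every vertex of $R_T(I-\{r\})$ satisfies $y_a'=y_b'$, so the formula is well-defined and the same style of case analysis (with the ``not top-most'' case vacuous) confirms the identity.

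I do not expect a serious obstacle: the proof is combinatorial bookkeeping together with linearity of $\phi_{I,r}$. The one step that needs genuine attention is the case split for $x_r$, and the accompanying verification that the edges $e(a)$ and $e(b)$ which are present as coordinates of the source but not of the target are exactly the ones needed to synthesize the new top-set coordinate at $r$.
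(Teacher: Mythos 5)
Your proposal is correct and follows essentially the same approach as the paper: verify that $\phi_{I,r}$ sends $[\bx',\by']^{\P}_{I-\{r\}}$ to $[\bx,\by]^{\P}_I$ for every path system $\P$ via the same three-way case analysis on the local configuration at $r$, then conclude by linearity (equivalently, by the fact that a linear map carries the convex hull of a point set to the convex hull of the images). Your treatment is marginally more explicit about the root case, which the paper handles in the surrounding text rather than inside the proof itself, but the argument is the same.
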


\begin{proof}
We show that for all path systems  $\P$ in $T$, 
the image of $[ \bx ', \by ']^{\P}_{I - \{ r \}}$ under $\phi_{I,r}$ 
is $[\bx, \by]^{\P}_I$. If $r$ 
is a node in a path in $\P$, then the path includes exactly two edges 
about $r$. So we have the following cases.

Case 1: Suppose that $y_a' = y_b' = y_r' = 0$. Then $e(a), e(b)$ and $e(r)$ 
are not edges in any path in $\P$, so $r$ cannot be the highest node in any 
path in $\P$. After applying $\phi_{I,r}$, we have $x_r = 0$, $x_i = x_i'$ 
for all $i \in I - \{r \}$, and $y_j = y_j'$ for all $e(j) \in \Ecal(T - I)$. 
So, the image of $[ \bx ', \by ']^{\P}_{I - \{ r \}}$ under $\phi_{I,r}$ 
is $[\bx, \by]^{\P}_I$ in this case.

Case 2: Suppose that $y_a' = y_r' =1$ and $y_b' = 0$. In this case, 
the path in $\P$ containing $r$ passes through $r$ along $e(a)$ and 
then upwards out of $r$ along $e(r)$. So, $r$ is not the highest node in this path. 
Since all paths in $\P$ are disjoint, $r$ is not the highest 
node in \textit{any} path in $\P$. Applying $\phi_{I,r}$ gives
\[
x_r = \frac{y_a' + y_b' - y_r'}{2} = 0,
\]
as needed. So, the image of $[ \bx ', \by ']^{\P}_{I - \{ r \}}$ 
under $\phi_{I,r}$ is $[\bx, \by]^{\P}_I$ in this case. The case 
where $y_b' = y_r ' = 1$ and $y_a' = 0$ is analogous.

Case 3: Suppose that $y_a'  = y_b' = 1$ and $y_r' = 0$. In this case, 
the path in $\P$ containing $r$ comes up to $r$ along $e(a)$ and then 
back downwards along $e(b)$. So, $r$ is the highest node in this path. 
Applying $\phi_{I,r}$ gives
\[
x_r = \frac{y_a' + y_b' - y_r'}{2} = 1,
\]
as needed. So, the image of $[ \bx ', \by ']^{\P}_{I - \{ r \}}$ 
under $\phi_{I,r}$ is $[\bx, \by]^{\P}_I$ in this case.

So, every vertex of $R_T(I - \{r \})$ maps to a vertex 
of $R_T(I)$ under $\phi_{I,r}$. Furthermore, every vertex $[\bx, \by]^{\P}_I$ of $R_T(I)$ 
 is the image of $[\bx ', \by']^{\P}_{I - \{ r \}}$. 
The result holds by linearity of the map $\phi_{I,r}$.
\end{proof}

\begin{definition}
Let $I$ be an order ideal in the poset consisting of all internal 
nodes of $T$.  A node $v \in I$ is called a \textit{cluster node} if 
$v$ is connected by edges to three other internal nodes.  A connected set 
of cluster nodes of $T$ is called a \textit{cluster}. 
Given a cluster $C \subseteq I$, 
$N_I(C)$  denotes the \textit{neighbor set} of $C$, which is the set of 
all internal nodes of $T$ that lie in $I - C$ and are 
adjacent to some node in $C$. When $I$ is the set of all 
internal nodes of $T$, we denote the neighbor set by $N(C)$. 
Denote by $m(C)$ the maximal node of $C$.
\end{definition}

Note that the maximal node of a cluster always exists since the cluster is a connected
subset of the rooted tree $T$.

\begin{example}\label{ex:cluster}
Consider the tree $T$ in Figure \ref{fig:cluster}. Then the set of nodes marked 
with triangles, $\{b,c\}$ forms a cluster since $b$ and $c$ are both cluster nodes and are adjacent. 
The neighbor set of this cluster, $N(\{b,c\})$, is the set of nodes marked with squares, $\{ a, d, e, f\}$.
The maximal element is $m( \{b,c\}) = b$.
\end{example}

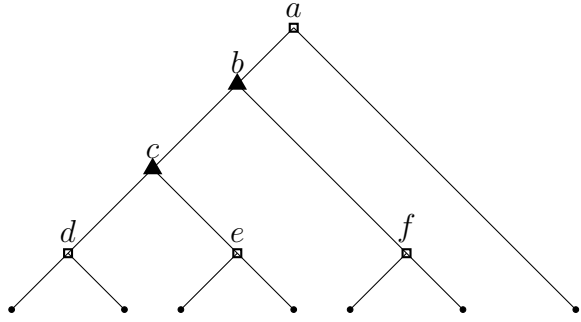
\begin{figure}
\begin{center}
\begin{tikzpicture}[scale=.75]
\draw (0,0) -- (5,5);
\draw(1,1) -- (2,0);
\draw(2.5,2.5) -- (5,0);
\draw (4,1) -- (3,0);
\draw (4,4) -- (8,0);
\draw(7,1) -- (6,0);
\draw(5,5) -- (10,0);
\draw[fill] (0,0) circle [radius = .05];
\draw[fill] (2,0) circle [radius = .05];
\draw[fill] (3,0) circle [radius = .05];
\draw[fill] (5,0) circle [radius = .05];
\draw[fill] (6,0) circle [radius = .05];
\draw[fill] (8,0) circle [radius = .05];
\draw[fill] (10,0) circle [radius = .05];
\draw[thick] ([xshift=-2pt,yshift=-2pt]1,1) rectangle ++(4pt,4pt);
\node [above] at (1,1) {$d$};
\draw [thick] ([xshift=-2pt,yshift=-2pt]4,1) rectangle ++(4pt,4pt);
\node[above] at (4,1) {$e$};
\draw[thick] ([xshift=-2pt,yshift=-2pt]7,1) rectangle ++(4pt,4pt);
\node [above] at (7,1) {$f$};
\draw[thick] ([xshift=-2pt,yshift=-2pt]5,5) rectangle ++(4pt,4pt);
\node[above] at (5,5) {$a$};
\node[fill=black,regular polygon, regular polygon sides=3,inner sep=1.5pt] at (2.5,2.5) {};
\node[above] at (2.5,2.5) {$c$};
\node[fill=black,regular polygon, regular polygon sides=3,inner sep=1.5pt] at (4,4) {};
\node [above] at (4,4) {$b$};
\end{tikzpicture}
\end{center}
\caption{An example of a cluster ${b,c}$, which are marked with triangles, and the elements of their neighbor set, which are marked with squares.} \label{fig:cluster}
\end{figure}

The main result of this section is Corollary \ref{cor:finalfacets}, which
gives a list of the facet defining inequalities of the polytopes $R_T$.
This result is obtained by proving the following more general results for the
polytopes $R_T(I)$.  This facet description depends on the underlying 
structure of the clusters in $T$.

\begin{thm} \label{thm:facets}
The polytope $R_T(I)$ is the solution to the following set of constraints:
\begin{itemize}[label=\raisebox{0.25ex}{\tiny$\bullet$}]
\item $y_s = y_t$, where edges $e(s)$ and $e(t)$ are joined to the root.
 \item $-y_i \leq 0$, $i$ maximal in $I$
\item $y_i - y_j - y_k \leq 0$, where $e(i), e(j), e(k)$ are three distinct edges that meet at a single node not in $I$,
\item $y_i + y_j + y_k \leq 2$, where $e(i), e(j), e(k)$ are three distinct edges that meet at a single node not in $I$,
\item $-x_i \leq 0$, for all $i \in I$,
\item $x_i + x_j \leq 1$ for all $i,j \in I$ with $i$ and $j$ adjacent
\item $x_i + y_i \leq 1$ for $i$ maximal in $I$
\item $2 \sum_{i \in C} x_i + \sum_{j \in N_I(C)} x_j + y_{m(C)} \leq \vert C \vert + 1 $ for all clusters $C \subset I$. 
\end{itemize}
\end{thm}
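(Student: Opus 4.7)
The plan is to proceed by induction on $|I|$, using the surjective linear maps $\phi_{I,r}: R_T(I - \{r\}) \to R_T(I)$ from the preceding proposition together with Fourier--Motzkin elimination to transfer the inequality description.

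For the base case $I = \emptyset$, the polytope $R_T(\emptyset)$ has only edge coordinates and coincides with the classical CFN polytope of $T$. The stated constraints collapse to the root equality $y_s = y_t$ together with the two triangle inequalities at each interior vertex, which is precisely the facet description of the CFN polytope given by Buczy\'nska and Wi\'sniewski \cite{buczynska2007}.

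For the inductive step, pick a maximal element $r$ of $I$ with children $a, b$ in $T$. The map $\phi_{I,r}$ drops the pair of coordinates $(y_a', y_b')$ and introduces $x_r$ subject to the affine relation $y_a' + y_b' = 2x_r + y_r$. I would parameterize $y_b' = 2x_r + y_r - y_a'$, substitute into the inductive inequality description of $R_T(I - \{r\})$, and then Fourier--Motzkin eliminate $y_a'$. The triangle inequalities at $r$ (present in the source since $r \notin I - \{r\}$) transform directly into $x_r \geq 0$ and a new cluster-type inequality at $r$. Inequalities at $a$ or $b$, whether they are triangle inequalities (if these vertices are outside $I$) or old cluster inequalities (if they lie in an existing cluster of $I - \{r\}$), combine with the substitution via Fourier--Motzkin to produce the pairwise adjacency inequality $x_r + x_a \leq 1$ (resp.~$x_r + x_b \leq 1$) when applicable and to extend an existing cluster inequality into one whose cluster now contains $r$. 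All inequalities that involve neither $y_a'$ nor $y_b'$ (triangle inequalities away from $r$, other cluster inequalities, and the root equality) pass through the projection unchanged.

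The main obstacle is the Fourier--Motzkin bookkeeping in the inductive step: for every pair of inequalities in the inductive description in which $y_a'$ appears with opposite signs, one must identify whether the resulting combination matches one of the inequalities listed in the theorem or is implied by them. This calls for a case analysis stratified by the position of $a$ and $b$ relative to $I - \{r\}$ and by the cluster structure near $r$ (new singleton cluster, extension of an existing cluster by one vertex, or merging of two clusters when both $a$ and $b$ belong to $I - \{r\}$). A useful simplification is that only a local neighborhood of $r$ in $T$ is affected by the projection, so that structural changes to the inequality description at each step are confined to vertices near $r$, keeping the case analysis manageable.
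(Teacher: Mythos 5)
Your outline matches the paper's proof exactly: induction on $|I|$ via $\phi_{I,r}$, the substitution $y_b' = 2x_r + y_r' - y_a'$, and Fourier--Motzkin elimination of $y_a'$, with the explicit case-by-case bookkeeping (which the paper carries out, producing seventeen inequality types and then discarding the redundant ones as positive combinations of others) being the real content. One small simplification you miss: since $I$ is an order ideal and $a,b$ are descendants of $r\in I$, any interior child of $r$ automatically lies in $I-\{r\}$, so your stratification by ``whether $a,b$ lie outside $I$'' collapses to whether they are leaves or interior, and the singleton cluster inequality at $r$ emerges from Fourier--Motzkin combination of the adjacency inequalities at $a$ and $b$ rather than directly from the triangle inequalities at $r$.
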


Note that if $m(C)$ is not a maximal node of $I$, then there is no coordinate $y_{m(C)}$.  
In this case, the final cluster inequality in Theorem \ref{thm:facets} reduces to
\[
2 \sum_{i \in C} x_i + \sum_{j \in N_I(C)} x_j  \leq \vert C \vert + 1.
\]
Note that we have chosen to write all of our inequalities with all
indeterminates on the left side and using all $\leq$ inequalities, as this will
facilitate our proof of Theorem \ref{thm:facets}.

\begin{example} \label{ex:facetexample}
Consider the tree $T$ in Figure \ref{fig:facetexample}. Note that the only cluster in $T$ is $\{ c \}$. Let $I \subset \Int(T)$ be the order ideal $\{b,c,d,e\}$. Then $R_T(I)$ lies in the hyperplane $y_b = y_f$ and has facets:

\[
\begin{array}{rrr}
y_f - y_g - y_h \leq 0, & \quad \quad &  x_b + x_c \leq 1,\\
-y_f + y_g - y_h \leq 0, &  &  x_c + x_d \leq 1, \\
-y_f - y_g + y_h \leq 0,&   & x_c + x_e \leq 1,\\
y_f + y_g + y_h \leq 2, &  & x_b + y_b \leq 1, \\
 x_b + 2x_c + x_d + x_e \leq 2 & &
\end{array}
\]
and $-x_i \leq 0$ for all $i \in I$.
\end{example}

\begin{figure}
\begin{center}
\begin{tikzpicture}[scale=.7]
\draw (0,0)--(5,5)--(10,0);
\draw (1,1)--(2,0);
\draw (2.5,2.5)--(5,0);
\draw(4,1)--(3,0);
\draw(3.5,3.5)--(7,0);
\draw(9,1)--(8,0);
\draw[fill] (0,0) circle [radius=.05];
\draw[fill] (1,1) circle [radius=.05];
\draw[fill] (2,0) circle [radius=.05];
\draw[fill] (2.5,2.5) circle [radius=.05];
\draw[fill] (4,1) circle [radius=.05];
\draw[fill] (3,0) circle [radius=.05];
\draw[fill] (5,0) circle [radius=.05];
\draw[fill] (5,5) circle [radius=.05];
\draw[fill] (3.5,3.5) circle [radius=.05];
\draw[fill] (7,0) circle [radius=.05];
\draw[fill] (9,1) circle [radius=.05];
\draw[fill] (8,0) circle [radius=.05];
\draw[fill] (10,0) circle [radius=.05];
\node[left] at (5,5) {$a$};
\node[left] at (3.5,3.5){$b$};
\node[left] at (2.5,2.5){$c$};
\node[left] at (1,1) {$d$};
\node[right] at(4,1) {$e$};
\node[right] at (9,1){$f$};
\node[left] at (8,0) {$g$};
\node[right] at (10,0){$h$};
\end{tikzpicture}
\end{center}
\caption{The tree $T$ in Example \ref{ex:facetexample}} \label{fig:facetexample}
\end{figure}
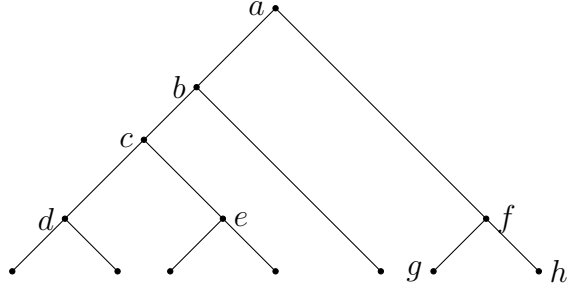

\begin{proof}[Proof of Theorem \ref{thm:facets}]
We proceed by induction on the size of the order ideal $I$.

When $\vert I \vert = 0$, $R_T({\emptyset})$ is the polytope associated to the CFN model,
 as described in \cite{sturmfels2005}. 
It follows from the results in \cite{ buczynska2007, sturmfels2005} 
that $R_T(\emptyset)$ has facets defined by $y_i - y_j - y_k \leq 0$ and 
$y_i + y_j + y_k \leq 2$ for all distinct $i,j,k$ such that 
$e(i), e(j),$ and $e(k)$ that meet at the same internal node.

Let $\vert I \vert \geq 1$ and let $1 , \dots , r$ be the maximal nodes of 
$I$. Suppose that $R_T({I - \{ r \}})$ has its facets defined by the proposed inequalities. About node $r$, we have the edges and  nodes depicted in Figure \ref{fig:localfoldingpicture}. Note that it is possible that $a$, $b$ or both are leaves. In the case that $a$ is a leaf, the inequalities below in which $x_a'$ is a term would not exist, and similarly for $b$ and $x'_b$.

We use Fourier-Motzkin elimination along with the linear map $\phi_{I,r}$ 
to show that the facets of $R_T(I)$ are defined by a subset of the proposed inequalities.

Recall that primed coordinates such as $y_a'$
indicate coordinates of $R_T(I - \{r\})$. 
In order to project $R_T({I - \{ r \} })$ onto $R_T(I)$, we 
``contract" onto $r$ by replacing $y_b'$ with $2x_r + y_r' - y_a'$, since under $\phi_{I,r}$,
\[
x_r = \frac{-y_r' + y_a' + y_b'}{2}.
\]
Then we use Fourier-Motzkin elimination to project out $y_a'$.

By the inductive hypothesis, the following are the inequalities in 
$R_T({I - \{ r \}})$ that involve $y_a'$ or $y_b'$. Note that these are 
the only types of inequalities that we need to consider, since any inequalities 
not involving $y_a'$ or $y_b'$ remain unchanged by Fourier-Motzkin elimination.

\begin{minipage}{.4\textwidth}
\begin{align*}
- y_a' &\leq 0, \\
- y_b' &\leq 0,\\
x_a ' + y_a' &\leq 1,\\
x_b' + y_b' &\leq 1,\\
2 \sum_{i \in C} x_i' + \sum_{j \in N_{I - \{ r \} }(C)} x_{j}' +  y_a' &\leq \vert C \vert +1,
\end{align*}
\end{minipage}\hfill\begin{minipage}{.4\textwidth}
\begin{align*}
y_a' - y_b' - y_r' & \leq 0,\\
-y_a' + y_b' - y_r' &\leq 0,\\
-y_a' - y_b' + y_r' &\leq 0,\\
y_a' + y_b' + y_r'& \leq 2,\\
2 \sum_{i \in D} x_i' + \sum_{j \in N_{I - \{ r \} }(D)} x_{j}' +  y_b' &\leq \vert D \vert +1,
\end{align*}
\end{minipage}
where $C$ ranges over all clusters that contain $a$ and are contained in the subtree beneath $a$, and $D$ ranges overall clusters that contain $b$ and are contained in the subtree beneath $b$. The same is true of $C$ and $D$ throughout the following discussion. Note that if $a$ (resp. $b$) is not a cluster node, then no such $C$ (resp. $D$) exists.

Applying $\phi_{I,r}$ yields the following inequalities, labeled by 
whether the coefficient of $y_a$ is positive or negative in order to facilitate Fourier-Motzkin elimination.

\begin{align}
-x_r & \leq 0 \tag{0}\\
 x_r + y_r & \leq 1 \tag{00} \\
y_a - y_r -2 x_r & \leq  0 \tag{$1_+$}\\
y_a + x_a &\leq  1 \tag{$2_+$}\\
y_a - y_r - x_r &\leq 0 \tag{$3_+$}\\
y_a + 2 \sum_{i \in C} x_i + \sum_{j \in N_{I - \{ r \} }(C)} x_{j} &\leq \vert C \vert +1 \tag{$4_+$}\\
-y_a & \leq 0 \tag{$1_-$}\\
-y_a + y_r + x_b + 2 x_r &\leq 1 \tag{$2_-$}\\
-y_a + x_r &\leq 0 \tag{$3_-$}\\
-y_a + y_r + 2 x_r + 2\sum_{i \in D} x_i + \sum_{j \in N_{I - \{ r \} }(D)} x_j & \leq \vert D \vert + 1  \tag{$4_-$}
\end{align}

If, without loss of generality, $a$ is an internal node and $b$ is a leaf, then inequalities $2_-$ and $4_-$ do not exist. If both $a$ and $b$ are leaves, then inequalities $2_+$, $4_+$, $2_-$ and $4_-$ do not exist.

We perform Fourier-Motzkin elimination to obtain the following
 $17$ types of inequalities, labeled by which of the above 
 inequalities where combined to obtain them. 
 The inequalities from $R_T({I -  \{ r \} })$ that did not contain $y_a'$ or $y_b'$
 also remain facet-defining inequliaties for $R_T(I)$.

\begin{align}
-x_r & \leq 0 \tag{0}\\
x_r + y_4 & \leq 1 \tag{00}\\
-2x_r - y_r & \leq 0 \tag{$1_+ 1_-$} \label{eqn8} \\
x_b & \leq 1 \tag{$1_+ 2_-$} \label{eqn3} \\
-x_r - y_r & \leq 0 \tag{$1_+ 3_-$} \label{eqn4} \\
2 \sum_{i \in D} x_i+ \sum_{j \in N_{I - \{ r \} }(D)} x_{j}&\leq \vert D \vert +1 \tag{$1_+ 4_-$} \label{eqn15} \\
x_a & \leq 1 \tag{$2_+ 1_-$} \label{eqn2} \\
2x_r + x_a + x_b + y_r & \leq 2 \tag{$2_+ 2_-$} \label{eqn9} \\
x_r + x_a & \leq 1 \tag{$2_+ 3_-$} \label{eqn7} \\
2\sum_{i \in D \cup \{ r \} } x_i + y_r + \sum_{j \in N_I(D \cup \{ r \})} x_j & \leq \vert D \vert +2 \tag{$2_+ 4_-$} \label{eqn14} \\
-x_r - y_r & \leq 0 \tag{$3_+ 1_-$} \label{eqn5} \\
x_r + x_b & \leq 1 \tag{$3_+ 2_-$} \label{eqn6} \\
-y_r & \leq 0 \tag{$3_+ 3_-$} \label{eqn1} \\
2\sum_{i \in D} x_i + \sum_{j \in N_I(D)} x_j  & \leq \vert D \vert +1 \tag{$3_+ 4_-$}\label{eqn13} \\
2 \sum_{i \in C} x_i + \sum_{j \in N_{I - \{ r \} }(C)} x_{j} &\leq \vert C \vert +1 \tag{$4_+ 1_-$} \label{eqn10} \\
2\sum_{i \in C \cup \{ r \}} x_i + y_r + \sum_{j \in N_I(C \cup \{ r \})} x_j & \leq \vert C \vert +2 \tag{$4_+ 2_-$} \label{eqn11} \\
2 \sum_{i \in C} x_i + x_r \sum_{j \in N_I(C)} x_{j} & \leq \vert C \vert +1 \tag{$4_+ 3_-$} \label{eqn12} \\
2x_r + 2 \sum_{i \in C} x_i + 2\sum_{j \in D} x_j + y_r + \sum_{k \in N_{I - \{ r \} }(C)} x_k + \sum_{l \in N_{I - \{ r \} }(D)} x_l & \leq \vert C \vert + \vert D \vert + 2 \tag{$4_+ 4_-$} \label{eqn16}
\end{align}

The inequalities encompassed by \ref{eqn11} (resp. \ref{eqn14}) give the 
proposed inequalities for all clusters of size greater than or equal to 2 
that contain $r$ and for which all other nodes are contained in the 
$a$-subtree (resp. $b$-subtree). The inequalities given by \ref{eqn12} (resp. \ref{eqn15}) 
are all of the proposed inequalities for clusters containing $a$ (resp. $b$) 
and not $r$. Inequality \ref{eqn9} gives the inequality for the cluster 
$\{ r \}$. Finally, the inequalities given by \ref{eqn16} encompass all 
clusters with the highest node $r$ that contain nodes in both the $a-$ and $b$-subtrees. 

Note also that inequalities $1_+ 1_-$, $1_+ 2_-$, $1_+ 3_-$, $1_+ 4_-$, 
$2_+1_-$, $3_+ 1_-$, $3_+ 4_-$ and $4_+ 1_-$ are all redundant as they 
are positive linear combinations of other inequalities on the list. For instance, 
inequality $1_+ 1_-$ can be obtained by adding together two copies of inequality $0$
and $3_+ 3_-$. Inequality $1_+ 4_-$ can be 
obtained by adding together inequalities $3_+4_-$ and 0. 

Note that if, without loss of generality, $a$ is an internal node and $b$ is a leaf,
then the irredundant inequalities $2_+2_-$,$2_+4_-$, $3_+2_-$, $3_+4_-$, $4_+2_-$ and $4_+4_-$ would not exist.
This is because $b$ is not an internal node and $r$ is not a cluster node in this case.
Similarly, if $a$ and $b$ are both leaves,
then the irredundant inequalities $2_+2_-$, $2_+3_-$, $2_+4_-$,
$3_+2_-$,$3_+4_-$, $4_+2_-$ and $4_+4_-$ would not exist.

The remaining 
inequalities, along with the others that are unchanged because they 
did not involve $y_a'$ and $y_b'$ are exactly those that we claimed 
would result from contracting onto $r$, as needed.

\end{proof}

\begin{cor}\label{cor:finalfacets}
The facet-defining inequalities of $R_T$ are:
\begin{itemize}[label=\raisebox{0.25ex}{\tiny$\bullet$}]
\item $x_i \geq 0$, for all $1 \leq i \leq n-1$,
\item $x_i + x_j \leq 1$, for all pairs of adjacent nodes, $i$ and $j$, and
\item $2 \sum_{i \in C} x_i + \sum_{j \in N_T(C)} x_j \leq \vert C \vert +1$ for all clusters $C$ in $T$.
\end{itemize}
\end{cor}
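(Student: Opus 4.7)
The plan is to invoke Theorem~\ref{thm:facets} with the special choice $I = \Int(T)$, the order ideal consisting of all interior vertices of $T$. Under the identification $[\bx,\by]_I^{\P} = \bx^{\P}$, we have $R_T(\Int(T)) = R_T$, so the facet description of $R_T$ is obtained by reading off which of the six families of inequalities in Theorem~\ref{thm:facets} survive when $I$ is maximal.

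When $I = \Int(T)$, every edge of $T$ is descended from some interior vertex (in particular, every edge lies below the root, which itself belongs to $I$), so $\Ecal(T - I) = \emptyset$ and there are no $y$-coordinates at all. This immediately renders vacuous the first three families in Theorem~\ref{thm:facets}: they concern either a relation among $y$-coordinates at the root or a triple of edges meeting at an interior vertex \emph{not} in $I$, and neither situation can occur once $I$ contains all interior vertices. The fourth and fifth families persist without change and give $x_i \geq 0$ for $1 \leq i \leq n-1$ and $x_i + x_j \leq 1$ for adjacent interior nodes $i,j$, matching the first two bullets of the corollary.

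The only case that requires a moment's thought is the cluster inequality $2 \sum_{i \in C} x_i + \sum_{j \in N_I(C)} x_j + y_{m(C)} \leq |C|+1$. Here the term $y_{m(C)}$ is simply absent, because $y_{m(C)}$ would be indexed by the edge $e(m(C))$ pointing from $m(C)$ toward the root, and this edge does not belong to $\Ecal(T-I) = \emptyset$ (equivalently, the only maximal vertex of $I = \Int(T)$ is the root, above which there is no edge). This is precisely the degenerate case flagged in the remark following Theorem~\ref{thm:facets}, and the inequality reduces to $2 \sum_{i \in C} x_i + \sum_{j \in N_I(C)} x_j \leq |C| + 1$. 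Since $N_I(C) = N_T(C)$ when $I = \Int(T)$ by the definition of the neighbor set, this is exactly the final bullet of the corollary, and every cluster $C$ in $T$ is accounted for. No obstacles remain; the corollary is a direct specialization of Theorem~\ref{thm:facets}.
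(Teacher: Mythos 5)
Your specialization of Theorem~\ref{thm:facets} to $I = \Int(T)$ is correct and cleanly handles the disappearance of the $y$-coordinates and the reduction of the cluster inequalities, and this is indeed how the paper begins its argument. However, the proposal stops at exactly the point where the real work starts, and the gap is substantive.

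Theorem~\ref{thm:facets} asserts only that $R_T(I)$ is the \emph{solution set} of the listed constraints; its proof (by Fourier--Motzkin elimination) explicitly produces and then discards redundant inequalities and never claims that the surviving ones are all facet-defining. By contrast, the corollary asserts that each listed inequality \emph{is} facet-defining. Specializing the theorem gives you one inclusion for free: since $R_T$ is cut out by the listed inequalities, every facet of $R_T$ must coincide with the zero set of one of them, so the facet-defining inequalities form a subset of the proposed list. What it does not give you is the reverse inclusion --- that none of the proposed inequalities is redundant, i.e.\ that each one is tight on a face of codimension exactly one. The paper's proof spends almost all of its length precisely on this: for each family $\{x_i = 0\}$, $\{x_i + x_j = 1\}$, and $\{2\sum_{c \in C} x_c + \sum_{j \in N(C)} x_j = |C|+1\}$, it constructs $n-1$ affinely (indeed linearly) independent vertices of $R_T$ lying on the proposed face, with the cluster case requiring a fairly delicate construction of the vectors $\bu_j$, $\bw_c$, and $\bz_k$. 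Your proof would need to supply this irredundancy argument --- or some substitute, such as verifying that no listed inequality is a nonnegative combination of the others plus the implied equalities --- before the corollary is established. As written, "the corollary is a direct specialization of Theorem~\ref{thm:facets}" is not correct, because the corollary is strictly stronger than what the theorem gives after specialization.
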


\begin{proof}
Let $r$ be the root of $T$.
Since $r$ is not a descendent of any edge, 
we interpret the coordinate $y_r$ in $R_T$ to be zero.
The fact that the facet defining inequalities of $R_T$ are a subset of the inequalities given in Corollary \ref{cor:finalfacets}
along with the inequality $x_r \leq 1$
follows directly from Theorem \ref{thm:facets}, 
since $x_r \leq 1$ is obtained by setting $y_r = 0$ in $x_r + y_r \leq 1$.
Note further that $x_r \leq 1$ is a redundant inequality,
since for any node $a$ adjacent to $r$, this inequality can be
obtained by summing $x_r + x_a \leq 1$ and $-x_a \leq 0$.
So indeed, the facet defining inequalities of $R_T$ are a subset of the proposed inequalities.

Now we must show that none of the proposed inequalities are 
redundant. To do this, we find $n-1$ affinely independent 
vertices of $R_T$ that lie on each of the proposed facets.

For all facets of the form $\{ \bx \mid x_i = 0 \}$, the $0$ vector, 
along with each of the standard basis vectors $\be_j$ such that 
$j \neq i$ are $n-1$ affinely independent vertices that lie on the face.
So, $\{ \bx \mid x_i = 0 \}$ is a facet of $R_T$.

Consider a face of the form $F = \{ \bx \mid x_i + x_j = 1 \}$ 
where $i$ and $j$ are adjacent nodes of $T$. Without loss of generality, 
let $i$ be a descendant of $j$. First, note that $\be_i, \be_j \in F$.

Let $k \neq i,j$ be an internal node of $T$. If $k$ is 
not a node in the $i$-subtree, then $\be_i + \be_k \in F$, since either 
$k$ is in the subtree of $T$ rooted at the descendant of $j$ not equal 
to $i$, or $k$ lies above $j$. In the first case, since the $i$- and 
$k$-subtrees are disjoint, we may choose any paths with highest nodes 
$i$ and $k$, which yield the desired vertex. In the second case, picking 
a path with highest node $i$, and a path with highest node $k$ that passes 
through the descendant of $j$ not equal to $i$ yields that $\be_i + \be_k$ is 
a vertex of $R_T$. Similarly, for all $k$ in the $i$-subtree, $\be_j + \be_k \in F$. 
Since every standard basis vector is in the linear span of 
\[
\{\be_i, \be_j\} \cup \{ \be_i + \be_k \mid k \neq j, k \text{ not in the } i\text{-subtree} \} \cup \{ \be_j + \be_k \mid k \neq i, k \text{ in the } i\text{-subtree} \},
\]
these $n-1$ vectors are linearly independent.

Finally, consider a face of the form $ F = \{ \bx \mid 2 \sum_{c \in C} x_c + \sum_{i \in N(C)} x_i = \vert C \vert + 1 \}$, for some cluster $C$ in $T$. Then $\vert N(C) \vert = \vert C \vert + 2$. First note that $\bu_j = \sum_{i \in N(C)} \be_i - \be_j$ is a vertex of $R_T$ for all $j \in N(C)$. If $j$ is the highest node of $N(C)$, then the $i$-subtrees for $i \in N(C), i \neq j$ are disjoint. So any two paths with highest nodes $i, k \in N(C)$, $i \neq k \neq j$ are disjoint. If $j$ is not the highest node in $N(C)$, let $k$ be the highest node. Then we may use any paths with highest nodes $i$ for all $i \in N(C)$ with $i \neq j,k$, and then a path with highest node $k$ that passes through $j$. Since the path between $k$ and $j$ contains only $j,k$ and elements of $C$, this path does not pass through any $i$-subtree for $i \in N(C)$, $i \neq j,k$. So, these paths are disjoint, as needed. So, $\{ \bu_j \mid j \in N(C)\}$ is a set of $\vert C \vert +2$ linearly independent vertices of $R_T$ that lie on $F$.

For all $c \in C$, let $\bw_c = \be_c + \sum_{i \in A_c} \be_i$ where $A_c$ is a set of $\vert C \vert -1$ elements of $N(C)$ such that (1) if $i \in N(C)$ is adjacent to $c$, then $i \not\in A_c$, (2) there exist $i,j \not\in A_c$ that are in the left and right subtrees beneath $c$, respectively, and (3) if $i$ is the highest node in $N(C)$, then $i \not\in A_c$. Note that at least one such set exists for all $c \in C$. Then $\bw_c$ is a vertex of $R_T$ since it results from the path system containing a path with highest node $c$ that passes through $i$ and $j$, where $i,j$ are the descendants of $c$ not in $A_c$ that exist by condition (2), and a path with highest node $k$ for all $k \in A_c$. Furthermore, $\bw_c \in F$ for all $c \in C$.

Note that $\{ \bu_j \mid j \in N(C) \} \cup \{ \bw_c \mid c \in C \}$ is a linearly independent set, since $\{ \bu_j \mid j \in N(C) \} $ is a linearly independent set of vectors that have all coordinates corresponding to elements of $C$ equal to 0, and each $\bw_c$ has a unique nonzero coordinate corresponding to $c \in C$.

Let $k$ be an internal node of $T$ such that $k \not\in C \cup N(C)$. If $k$ is a descendant of $j$ for some $j \in N(C)$ that is not the highest node of $N(C)$, then $\bz_k = \bu_j + \be_k$ is a vertex of $R_T$ that lies on $F$. Otherwise, $k$ is either a descendant of only the highest node, $i$, of $N(C)$, or not a descendant of any element of $N(C)$. In either of these cases, $\bz_k = \bu_i + \be_k$ is a vertex of $R_T$ that lies on $F$.

Also, $\{ \bu_j \mid j \in N(C) \} \cup \{ \bw_c \mid c \in C \} \cup \{ \bz_k \mid k \not\in C \cup N(C)\}$ is a linearly independent set as each element of $\{ \bu_j \mid j \in N(C) \} \cup \{ \bw_c \mid c \in C \}$ has coordinates corresponding to nodes not in $C$ or $N(C)$ equal to 0, and each $\bz_k$ has a unique nonzero coordinate corresponding to $k \not\in N(C) \cup C$. This set also has cardinality $\vert C \vert + 2 + \vert C \vert + n - 2\vert C \vert - 3 = n-1$. So, since we have found $n-1$ linearly independent vertices of $R_T$ that lie on $F$, $F$ is a facet of $R_T$. 
\end{proof}

We conclude this section with the remark that the number of facets of $R_T$ varies widely for different tree topologies.
For a tree with $n$ leaves and no cluster nodes, there are $2n-3$ facets of $R_T$ corresponding to each non-negativity
condition and each of the facets arising from adjacent nodes.
In contrast, the following is an example of a construction of trees
with exponentially many facets.

\begin{example}\label{ex:exponentialfacets}
Let $m$ be a positive integer. We construct a tree $T_m$ with $4m+5$ leaves as follows. 
Begin with a path, or ``spine", of length $m$. 
To the top node of this spine, attach a single pendant leaf; this top node becomes the root of $T_m$. 
Attach a balanced 4-leaf tree descended from every node of the spine, 
with two attached to the node at the bottom of the spine.
There are $2m + 1$ cluster nodes in $T_m$: 
the nodes that are in the spine and the root of each of the balanced 4-leaf trees descended from the spine.
Figure \ref{fig:exponentialfacets} depicts this tree for $m=3$. 

Let $S$ be the set of all nodes in the spine, 
and let $A$ be any set of nodes immediately descended from a spine node.
Then $S\cup A$ is a cluster.
Clusters of this form account for $2^{m+1}$ facets of $R_T$ for 
this $(4m+5)$-leaf tree.

\begin{figure}
\begin{center}
\begin{tikzpicture}[scale=.3]
\draw(0,0)--(15,15)--(30,0);
\draw (1,1)--(2,0);
\draw(2.5,2.5)--(5,0);
\draw(4,1)--(3,0);
\draw(6,6)--(12,0);
\draw(9.5,2.5)--(7,0);
\draw(8,1)--(9,0);
\draw(11,1)--(10,0);
\draw(9.5,9.5)--(19,0);
\draw(16.5,2.5)--(14,0);
\draw(15,1)--(16,0);
\draw(18,1)--(17,0);
\draw(13,13)--(26,0);
\draw(23.5,2.5)--(21,0);
\draw(22,1)--(23,0);
\draw(25,1)--(24,0);
\draw[fill] (6,6) circle [radius = .2];
\draw[fill] (9.5,9.5) circle [radius = .2];
\draw[fill] (13,13) circle [radius = .2];
\end{tikzpicture}
\end{center}
\caption{The tree construction for $T_3$ described in Example \ref{ex:exponentialfacets}. ``Spine" nodes are marked with a circle.} \label{fig:exponentialfacets}
\end{figure}
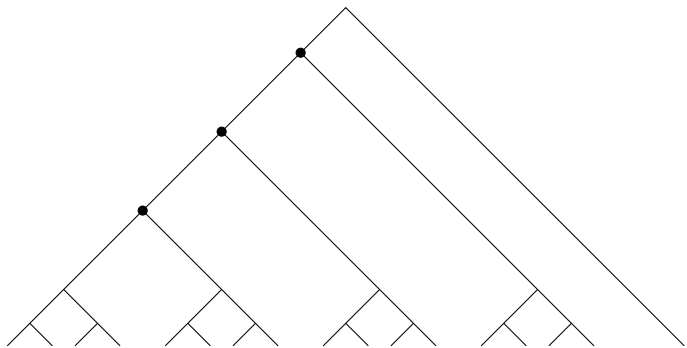

\end{example}


\section{Generators of the CFN-MC Ideal}

The aim of this section is to prove the following theorem.

\begin{thm}\label{thm:generatorsmain}
For any rooted binary phylogenetic tree $T$, the CFN-MC ideal $I_T$ has a Gr\"obner basis consisting of homogeneous quadratic binomials with squarefree initial terms.
\end{thm}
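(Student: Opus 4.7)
The plan is to prove Theorem \ref{thm:generatorsmain} by induction on the number of leaves of $T$, splitting into two cases according to whether the tree admits an internal edge at which the monomial parametrization factors compatibly. If $T$ has an interior edge $e$ both of whose endpoints are non-cluster nodes, the top-set of a system of disjoint paths decomposes along $e$ into two independent top-sets on the subtrees $T_1, T_2$ obtained by cutting $e$, and the restrictions are compatible only in the top-indicator values at the two endpoints of $e$. This is precisely the situation accommodated by the toric fiber product construction of \cite{sullivant2007}: $I_T$ is realized as a toric fiber product of $I_{T_1}$ and $I_{T_2}$ matched along the two top-coordinates at the endpoints of $e$. By the transfer theorem for toric fiber products, a quadratic Gr\"obner basis with squarefree initial terms for $I_{T_1}$ and $I_{T_2}$ lifts to one for $I_T$. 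Combined with the inductive hypothesis, this reduces the theorem to the base case in which every interior edge of $T$ is incident to a cluster node, i.e.\ the cluster trees identified in the introduction as requiring new techniques.

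For the cluster-tree case, the plan is to exhibit an explicit collection of quadratic binomials, following the \emph{Lift} and \emph{Swap} terminology previewed in Example \ref{ex:parametrization} and Proposition \ref{prop:liftable}. A Lift binomial pairs two compatible path systems whose union can be redistributed by pushing a topmost vertex one step toward the root in one system and deleting it from the other, preserving the multiset of $t_i$ factors in the parametrization \eqref{eqn:finalparametrization}. A Swap binomial pairs two compatible path systems that differ by a local reroute around a single cluster node. First I would verify that both kinds of binomials lie in $I_T$ by checking equality of their images directly on \eqref{eqn:finalparametrization}. Next, using the vertex/facet description of $R_T$ in Theorem \ref{thm:facets} together with the columns of the matrix $A_T$, I would show that the integer lattice of syzygies among these columns is spanned by the exponent vectors of the Lift and Swap binomials, establishing that they generate $I_T$.

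To upgrade generation to a Gr\"obner basis with squarefree initial terms, I would introduce a term order tailored to the tree — for instance a lexicographic order that favors $r$-variables whose top-sets contain vertices higher in the tree, breaking ties by cardinality — so that every Lift and every Swap binomial has a canonical leading monomial. Squarefreeness of the leading monomials is then automatic because each binomial is multilinear and distinguishes its two variables. Verifying the Gr\"obner basis property via Buchberger's criterion then amounts to showing that every S-pair between two generators (whether Lift--Lift, Swap--Swap, or mixed) reduces to zero after a bounded sequence of further Lift and Swap rewrites localized in a neighborhood of the two supports.

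The hard part will be the S-pair reductions in the cluster case, specifically when two Swap binomials involve path systems whose supports overlap deeply inside a single large cluster. In that regime the natural local rewriting may require intermediate path systems that are not themselves supports of either original binomial, so the standard local Buchberger argument does not close immediately. Overcoming this should combine two ingredients: the cluster inequality $2\sum_{i \in C} x_i + \sum_{j \in N(C)} x_j \leq |C| + 1$ from Theorem \ref{thm:facets}, which tightly constrains which top-vectors can co-occur along a cluster, and the rigid ``topmost vertex'' structure of disjoint path systems, which limits how many inequivalent intermediates can arise in any single reduction. The expected outcome is that each problematic S-pair collapses through a short chain of Lift/Swap reductions supported on the same cluster, closing the Buchberger check and completing the proof.
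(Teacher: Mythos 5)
Your reduction to the cluster-tree case via toric fiber products is essentially the paper's first step (the paper decomposes at an interior \emph{node} $v$ adjacent to exactly two other interior nodes, matching along a single coordinate, rather than at an interior edge, but the structural idea and the appeal to the Sullivant transfer theorem are the same). The genuine gap is in the cluster-tree case, which is also where the real difficulty lies.

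First, your reading of ``Lift'' and ``Swap'' does not match what the paper actually does. In the cluster-tree case the paper deletes the root $\rho$ to obtain a \emph{bicluster tree} $T'$, observes that $I_{T'}$ is itself a toric fiber product of two smaller cluster trees $U_1,U_2$, and then defines $\mathrm{Root}(T)$ (binomials of $I_{T'}$ lifted back to $I_T$ by optionally attaching a root path) and $\mathrm{Swap}(\rho)$ (binomials of the form $r_{1[\P]}r_{0[\Q]}-r_{0[\P]}r_{1[\Q]}$ that move a path through the root from one factor to the other, where $\P,\Q$ are root-leaf traversable). Neither of these is ``pushing a topmost vertex one step toward the root'' or ``a local reroute around a single cluster node'' — your descriptions are of different-looking moves that you would still need to check are even in $I_T$. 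Second, the inductive mechanism in the paper is a specific notion of a \emph{liftable term order}: a block order separating root-augmentable from non-root-augmentable variables. Proposition~\ref{prop:Tprime} constructs such an order on $I_{T'}$ from liftable orders on $I_{U_1}, I_{U_2}$, and Proposition~\ref{prop:liftable} propagates it to $I_T$. Your proposed lexicographic-by-height order is not this order, and there is no reason to expect a height-based lex order to interact well with the root-augmentability structure that the reduction argument hinges on.

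Third, your plan for establishing that the binomials \emph{generate} $I_T$ — ``show that the integer lattice of syzygies among these columns is spanned by the exponent vectors'' — does not work. For a toric ideal, a lattice basis of $\ker_{\mathbb Z}(A_T)$ gives only a \emph{lattice basis ideal}, which is in general a proper subset of $I_T$; generation requires a Markov basis, a strictly stronger property that cannot be read off from the lattice alone. The paper avoids this entirely: it directly shows every binomial of $I_T$ reduces to zero under the candidate set, which simultaneously gives generation and the Gr\"obner property without ever invoking Buchberger's criterion. Finally, you yourself flag that the S-pair reductions for overlapping swaps inside a large cluster are ``the hard part,'' and you state only an \emph{expected outcome}; this is exactly the part that would constitute the proof, and it is left unargued. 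The cluster inequality from Theorem~\ref{thm:facets} plays no role in the paper's argument, and I don't see how it would control the S-pair combinatorics in the way you hope.

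So: the first half is fine, but the cluster-tree half as written is not a proof. The missing content is precisely the liftable-order machinery (bicluster tree decomposition, block order on root-augmentability, Propositions~\ref{prop:Tprime} and~\ref{prop:liftable}), and the lattice-spanning step would have to be replaced by an actual reduction or Markov basis argument.
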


To accomplish this, we show that for most trees $T$, the CFN-MC ideal is the toric fiber product of the ideals of two smaller trees. In these cases, we can use results from \cite{sullivant2007} to describe the generators of $I_T$ in terms of the generators of the ideals of  these smaller trees. We then handle the case of trees for which $I_T$ is not a toric fiber product; such trees are called \textit{cluster trees}.

For simplicity of notation, we switch to denoting the top-vector associated to a path system $\P$ by $[\P]$. As before, note that it is possible to have two different path systems $\P$ and $\Q$ for which $[\P] = [\Q]$.  We often make use of the following notion of restriction of a path system to a subtree.

\begin{definition}
Let $T$ be a tree and let $T'$ be a subtree of $T$. Let $\P$ be a path system in $T$. Then the \emph{restriction} of $\P$ to $T'$ is the path system $\P'$ in $T'$ obtained by the following procedure for each path $P \in \P$. If the top-most node of $P$ is not in $\Int(T')$, delete $P$. Otherwise, intersect the edges of $P$ with the edges of $T'$ to obtain a path $P'$, and add $P'$ to $\P'$. 
\end{definition}

Note that if $\P'$ is the restriction of $\P$ to $T'$, then $[\P']$ is equal to $[\P]$ on each coordinate in $\Int(T')$.
\subsection{Toric Fiber Products} \label{sec:tfp}

Let $T$ be a tree that has an internal node $v$ that is adjacent to exactly two other internal nodes. There are two cases for the position of $v$ within $T$, both of which provide a natural way to divide $T$ into two smaller trees, $T'$ and $T''$. 

If $v$ is the root, then let $T'$ be the tree with $v$ as a root in which the right subtree of $v$ is equal to the right subtree of $T$ and the left subtree of $v$ is a single edge. Let $T''$ be the tree with $v$ as a root in which the left subtree of $v$ is equal to the left subtree of $T$ and the right subtree of $v$ is a single edge. This decomposition is pictured in Figure \ref{fig:tfp1}.

If $v$ is not the root, then $v$ is adjacent to two internal nodes and a leaf in $T$. Let $T'$ be the tree consisting of all non-descendants of $v$ (including $v$ itself) with a cherry added below $v$. Let $T''$ be the tree consisting of $v$ and all of its descendants. This decomposition is pictured in Figure \ref{fig:tfp2}.

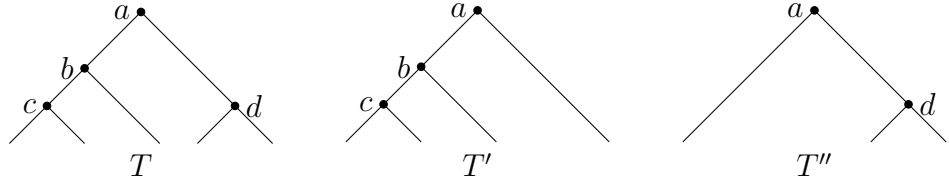
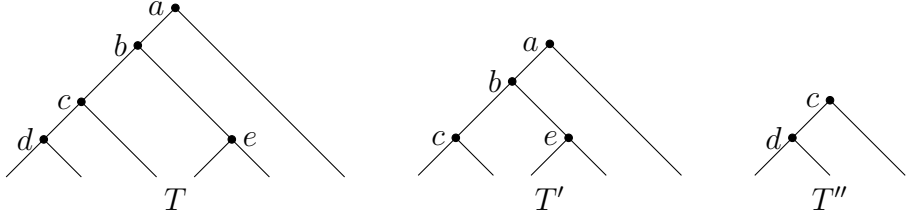
\begin{figure}

\begin{subfigure}{\textwidth}

\centering
\begin{tikzpicture}[scale=.5]
\draw (0,0)--(3.5,3.5)--(7,0);
\draw(1,1)--(2,0);
\draw(2,2)--(4,0);
\draw(6,1)--(5,0);
\draw[fill] (1,1) circle [radius=.1];
\node[left] at (1,1) {$c$};
\draw[fill] (2,2) circle [radius=.1];
\node[left] at (2,2) {$b$};
\draw[fill] (3.5,3.5) circle [radius=.1];
\node[left] at (3.5,3.5) {$a$};
\draw[fill] (6,1) circle [radius=.1];
\node[right] at (6,1) {$d$};
\node[below] at (3.5,0){$T$};
\end{tikzpicture} \qquad \begin{tikzpicture}[scale=.5]
\draw(0,0)--(3.5,3.5)--(7,0);
\draw(1,1)--(2,0);
\draw(2,2)--(4,0);
\draw[fill] (1,1) circle [radius=.1];
\node[left] at (1,1) {$c$};
\draw[fill] (2,2) circle [radius=.1];
\node[left] at (2,2) {$b$};
\draw[fill] (3.5,3.5) circle [radius=.1];
\node[left] at (3.5,3.5) {$a$};
\node[below] at (3.5,0){$T'$};
\end{tikzpicture} \qquad \begin{tikzpicture}[scale=.5]
\draw (0,0)--(3.5,3.5)--(7,0);
\draw(6,1)--(5,0);
\draw[fill] (3.5,3.5) circle [radius=.1];
\node[left] at (3.5,3.5) {$a$};
\draw[fill] (6,1) circle [radius=.1];
\node[right] at (6,1) {$d$};
\node[below] at (3.5,0){$T''$};
\end{tikzpicture}
\caption{Splitting $T$ into $T'$ and $T''$ where distinguished node ($a$) is the root.} \label{fig:tfp1}
\end{subfigure}
\begin{subfigure}{\textwidth}

\centering
\begin{tikzpicture}[scale=.5]
\draw (0,0)--(4.5,4.5)--(9,0);
\draw(3.5,3.5)--(7,0);
\draw(1,1)--(2,0);
\draw(2,2)--(4,0);
\draw(6,1)--(5,0);
\draw[fill] (4.5,4.5) circle [radius=.1];
\node[left] at (4.5,4.5) {$a$};
\draw[fill] (1,1) circle [radius=.1];
\node[left] at (1,1) {$d$};
\draw[fill] (2,2) circle [radius=.1];
\node[left] at (2,2) {$c$};
\draw[fill] (3.5,3.5) circle [radius=.1];
\node[left] at (3.5,3.5) {$b$};
\draw[fill] (6,1) circle [radius=.1];
\node[right] at (6,1) {$e$};
\node[below] at (4.5,0){$T$};
\end{tikzpicture} \qquad \begin{tikzpicture}[scale=.5]
\draw(0,0)--(3.5,3.5)--(7,0);
\draw(1,1)--(2,0);
\draw(2.5,2.5)--(5,0);
\draw(4,1)--(3,0);
\draw[fill] (1,1) circle [radius=.1];
\node[left] at (1,1) {$c$};
\draw[fill] (2.5,2.5) circle [radius=.1];
\node[left] at (2.5,2.5) {$b$};
\draw[fill] (3.5,3.5) circle [radius=.1];
\node[left] at (3.5,3.5) {$a$};
\draw[fill] (4,1) circle [radius=.1];
\node[left] at (4,1) {$e$};
\node[below] at (3.5,0){$T'$};
\end{tikzpicture} \qquad \begin{tikzpicture}[scale=.5]
\draw(0,0)--(2,2)--(4,0);
\draw(1,1)--(2,0);
\draw[fill] (1,1) circle [radius=.1];
\draw[fill](2,2) circle [radius=.1];
\node[left] at (1,1){$d$};
\node[left] at (2,2){$c$};
\node[below] at (2,0) {$T''$};
\end{tikzpicture}
\caption{Splitting $T$ into $T'$ and $T''$ where distinguished node ($c$) is not the root.} \label{fig:tfp2}
\end{subfigure}
\caption{Decomposition of $T$ into $T'$ and $T''$ via a node adjacent to exactly two internal nodes}
\end{figure}

In either case, notice that $[\P]$ is the top-vector of a path system in $T$ if and only if the restrictions of $[\P]$, $[\P']$ and $[\P'']$ to $T'$ and $T''$ respectively are top-vectors of path systems in $T'$ and $T''$ that agree on $v$. The following lemma makes this observation precise.

\begin{lemma}\label{lem:pathsintfp}
Suppose that $T$ has an internal node $v$ that is adjacent to exactly two other internal nodes. 
Let $T'$ and $T''$ be the induced trees defined above depending upon the position of $v$ within $T$.
Let $\P$ be a path system in $T'$ and let $\Rcal$ be a path system in $T''$. 
Then there exists a path system $\P \vee \Rcal$  in $T$ such that $[\P \vee \Rcal]_i = [\P]_i$ for each $i \in \Int(T')$ 
and $[\P \vee \Rcal]_j = [\Rcal]_j$ for each $j \in \Int(T'')$.
\end{lemma}

\begin{proof}
First, consider the case where $v$ is the root. 
Then $T'$ is the tree with root $v$ whose left subtree is equal to that of $T$ and whose right subtree is a single leaf.
Similarly, $T''$ is the tree with root $v$ whose right subtree is equal to that of $T$ and whose left subtree is a single leaf. Let $\P$ be a path system in $T'$ and let $\Rcal$ be a path system in $T''$.

 If $[\P]_v = [\Rcal]_v = 0$, then no path in $\P$ or $\Rcal$ passes through $v$. 
 So each path in $\P$ and $\Rcal$ is also a path in $T$.
So we let $\P \vee \Rcal = \P \cup \Rcal$,
where the edge set of each path is a subset of the edges of $T$.

If $[\P]_v = [\Rcal]_v = 1$, then let $\bar{P}$ be the path of $\P$ whose top-most node is $v$
and let $\bar{R}$ be the path of $\Rcal$ whose top-most node in $v$.
Let $\bar{PR}$ be the path in $T$ with edge set equal to that of $P$ 
on the left subtree of $T$ and that of $R$ on the right subtree of $T$.
This is a path in $T$ with top-most node $v$.
In this case, let $\P \vee \Rcal = (\P \cup \Rcal \cup \{\bar{PR}\}) \setminus \{ P, R\}$,
where the edge set of each path is a subset of the edges of $T$.

Now consider the case where $v$ is not the root. Let $\ell$ be the leaf of $T$ that is adjacent to $v$.
Then $T'$ consists of all non-descendants of $v$ and two leaves below $v$. One of these leaves is $\ell$;
let $m$ be the other leaf below $v$ in $T'$.
The tree $T''$ consists of $v$ and all of its descendants. Let $\P$ be a path system in $T'$ and let $\Rcal$ be a path system in $T''$.

First, suppose $[\P]_v = [\Rcal]_v = 0$. Since $[\P]_v = 0$, we may assume that any path
in $\P$ that passes through $v$ passes through $\ell$ and not $m$. Since $v$ is the root
of $T''$, and $[\Rcal]_v = 0$, no path in $\Rcal$ passes through $v$ or $\ell$.
So we let $\P \vee \Rcal = \P \cup \Rcal$,
where the edge set of each path is a subset of the edges of $T$.

Now suppose $[\P]_v = [\Rcal]_v = 1$. Then $\P$ contains the path
in $T'$ between leaves $m$ and $\ell$. So we let
$\P \vee \Rcal = (\P \cup \Rcal) \setminus \{ P \}$, where the 
edge set of each path is a subset of the edges of $T$.
\end{proof}

One implication of Lemma \ref{lem:pathsintfp} is that the matrix $A_T$ of $I_T$ can be obtained by pairing together all columns in $A_{T'}$ and $A_{T''}$ that agree on $v$, and consolidating the rows corresponding to $v$ and the homogenizing rows of ones from each. This translates exactly to the operation on toric ideals known as the \textit{toric fiber product}, which was introduced in \cite{sullivant2007}.

Let $I_T \subset \K[\ur]$, $I_{T'} \subset \K[\ux]$, $I_{T''} \subset \K[\uy]$. Consider the map $\xi_{I_{T'},I_{T''}}$ from $\K[\ur]$ to $\K[\ux] \otimes_{\K} \K[\uy]$ defined as follows. For any path system $\P$ in $T$, let $\P'$ and $\P''$ be the restrictions of $\P$ to $T'$ and $T''$ respectively. Then
\[
\xi_{I_{T'},I_{T''}} (r_{[\P]}) = x_{[\P']} \otimes y_{[\P'']}.
\]
Following the notation of \cite{sullivant2007}, the kernel of $\xi_{I_{T'},I_{T''}}$ is the \textit{toric fiber product} $I_{T'} \times_{\A} I_{T''}$. Here, $\A$ is the matrix
\[
\A = \begin{bmatrix}
1 & 1 \\
0 & 1
\end{bmatrix},
\]
where the first row corresponds to the homogenizing row of ones, and the second row corresponds to the shared node $v$ of $T'$ and $T''$. By Lemma \ref{lem:pathsintfp}, we can join any path systems in $T'$ and $T''$ that whose top-vectors agree on $v$ to create a path system in $T$. 

\begin{prop}\label{prop:tfp}
Suppose that $T$ has an internal node $v$ that is adjacent to exactly two other internal nodes. Let $T'$ and $T''$ be the induced trees defined above depending upon the position of $v$ within $T$. Then $I_T  \cong I_{T'} \times_{\A} I_{T''}$.
\end{prop}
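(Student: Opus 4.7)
The plan is to deduce the claim as a direct consequence of the definition of the toric fiber product from \cite{sullivant2007}, once we verify the combinatorial fact that valid top-vectors of $T$ correspond bijectively to pairs of valid top-vectors of $T'$ and $T''$ whose $v$-coordinates agree.

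First, I will match the data of the three parametrizations. The matrices $A_T$, $A_{T'}$, $A_{T''}$ have rows indexed by a homogenizing row together with $\Int(T)$, $\Int(T')$, $\Int(T'')$, respectively. The two rows shared between $A_{T'}$ and $A_{T''}$ are the homogenizing row and the row indexed by $v$, matching the two rows of $\A$. By the general definition of the toric fiber product, $I_{T'} \times_{\A} I_{T''}$ is the kernel of a monomial map whose generating indeterminates correspond to compatible pairs of columns in $A_{T'}$ and $A_{T''}$, where compatibility means agreement on the rows of $\A$, and the image monomial is the concatenation of the two columns with one copy of each shared row recorded. It therefore suffices to produce a bijection between top-vectors of $T$ and such compatible pairs that identifies the monomial images, which reduces to a purely combinatorial statement about path systems.

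Second, I will verify the bijection. Given a collection of disjoint paths $\P$ in $T$, I project its top-vector $[\P]$ onto $\Int(T')$ and $\Int(T'')$ separately and exhibit path systems $\P'$ in $T'$ and $\P''$ in $T''$ whose top-vectors equal these projections; the two projections match at the $v$-coordinate by construction. Conversely, given a compatible pair $([\P'], [\P''])$ with $[\P']_v = [\P'']_v$, I glue the underlying path systems along $v$: path-fragments of $\P'$ reaching $v$ (along the added pendant edge or cherry) splice with path-fragments of $\P''$ reaching $v$, while paths not incident to $v$ carry over unchanged. When $[\P']_v = [\P'']_v = 1$, the length-two path through the added cherry in $T'$ (or the path using the single pendant edge at $v$ in the root case) combines with the path topped at $v$ in $T''$ to form a single $T$-path also topped at $v$.

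The main obstacle is showing that this splicing always yields a disjoint collection of paths in $T$, and this is exactly where the hypothesis on $v$ is used. Because $v$ is either the root of $T$ or adjacent to a leaf of $T$, at most one path-fragment of $\P'$ meets $v$ from the auxiliary side and at most one path-fragment of $\P''$ meets $v$ from its matching side, so the splice is canonical and the resulting paths remain pairwise edge-disjoint. Once this gluing is verified, the column of $A_T$ indexed by $[\P]$ equals the fibered concatenation of the columns of $A_{T'}$ and $A_{T''}$ indexed by $[\P']$ and $[\P'']$, so the monomial parametrizations agree and $I_T = \ker \xi_{I_{T'}, I_{T''}} = I_{T'} \times_{\A} I_{T''}$.
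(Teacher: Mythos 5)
Your proposal is correct and follows essentially the same route as the paper: both hinge on the combinatorial fact (which the paper states in the discussion just before the proposition and you verify inside the proof) that top-vectors of $T$ correspond bijectively to pairs of top-vectors of $T'$ and $T''$ agreeing at $v$, with the hypothesis on $v$ guaranteeing the splice yields disjoint paths. The paper's formal proof instead emphasizes the algebraic bookkeeping — noting that $t_0$ and $t_v$ appear with even exponent in the fiber-product parametrization so they may be replaced by their square roots — which is the same normalization you capture with ``one copy of each shared row recorded.''
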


\begin{proof}
The monomial map of which $I_T$ is the kernel is given by
\begin{align*}
\psi_T : \ & k[\ur]  \rightarrow  k[t_0, \dots, t_{n-1}]\\
&r_{[\P]}  \mapsto  t_0 \prod_{[\P]_i =1} t_i
\end{align*}

Let $S$ be the two-leaf tree rooted at $v$, and let $\K[\uz] = \K[z_0, z_1]$ be its associated polynomial ring. Denote by $\bar{\P}$ the restriction of $\P$ to $S$, by $\P'$ the restriction of $\P$ to $T'$ and by $\P''$ the restriction of $\P$ to $T''$. Then we have the identity
\begin{equation}\label{eqn:tfpidentity}
\psi_{S}(z_{[\bar{\P}]}) \psi_T (r_{[\P]}) = \psi_{T'}(x_{[\P']}) \psi_{T''}(y_{[\P'']}).
\end{equation}

The map defining the toric fiber product $I_{T'} \times_{\A} I_{T''}$ can be written
\begin{align*}
\xi_{I_{T'}, I_{T''}}: \ &k[\ur] \rightarrow k[t_0, \dots, t_{n-1}] \\
& r_{[\P]} \mapsto \big(t_0 \prod_{[\P']_i = 1 } t_i \big) \big(t_0 \prod_{[\P'']_i = 1} t_i\big)
\end{align*}

Notice that $t_0$ and $t_v$ can only appear in the image of $\xi_{I_{T'}, I_{T''}}$ with exponent 2. Therefore we may replace these variables by their square roots in the formula for the image of each $r_{[\P]}$ in the map $\xi_{I_{T'}, I_{T''}}$ without changing the kernel. This yields the same map as $\psi_T$. Therefore,
\[
I_T = \ker \psi_T \cong \ker \xi_{I_{T'}, I_{T''}} = I_{T'} \times_{\A} I_{T''}. \qedhere
\]\end{proof}

Let $\G_1$ be a Gr\"obner basis for $I_{T'}$ with weight 
vector $\omega_1$, and let $\G_2$ be a Gr\"obner basis for 
$I_{T''}$ with weight vector $\omega_2$. From these, we 
define several sets of polynomials in $\K[\ur]$ that together 
form a Gr\"obner basis for $I_T$ with respect to some weighted monomial order. 
Let $\P_1,\dots,\P_d$ and $\Q_1,\dots, \Q_d$ be path systems in $T'$
such that $f = \prod_{i=1}^d x_{[\P_i]} - \prod_{i=1}^d x_{[\Q_i]} \in \G_1$.
We arrange these  so that $[\P_i]_v = [\Q_i]_v$ for all $i$.
 Note that $f$ can always be written in this form
since the parameter $t_v$ must appear with the same power
in the image of each monomial under $\psi_{T'}$ in order for $f$ to be in its kernel. 
Let $R(f)$ denote the set of all $d$-tuples $(\Rcal_1', \ldots, \Rcal_d')$
of path systems in $T''$ such that $[\Rcal_i]_v = [\P_i]_v$.

By Lemma \ref{lem:pathsintfp}, for any path systems $\P$ in $T'$ and $\Rcal$ in $T''$
with $[\P]_v = [\Rcal]_v$, we can find a path system $\P \vee \Rcal$ in $T$ such that
$[\P \vee \Rcal]_i = [\P]_i$ for each $i \in \Int(T')$ and $[\P \vee \Rcal]_j = [\Rcal]_j$ for
each $j \in \Int(T'')$.

 Define the set
\[
\Lift(f) = \left \{ \prod_{i=1}^d r_{[\P_i \vee \Rcal_i]} - \prod_{i=1}^d r_{[\Q_i \vee \Rcal_i]}  :
(\Rcal_1, \ldots, \Rcal_d) \in R(f)
\right \}.
\]
 Then let
\[
\Lift(\G_1) = \cup_{f \in \G_1} \Lift(f),
\]
and similarly define $\Lift(\G_2)$. 

We now define another family of polynomials that is contained in the Gr\"obner basis for $I_T$.
Let $[\P_1], \dots, [\P_r]$ be the top-vectors of paths in $T'$ with $v$-coordinate 0 and let $[\Q_1], \dots, [\Q_s]$ be the top-vectors of paths in $T''$ with $v$-coordinate 0. (Note that these $\P_i$ and $\Q_i$ are unrelated to those in the previous paragraph). Define the set $\Quad_0(T)$ to be the set of all $2 \times 2$ minors of the matrix $M_0(T)$ with $(i,j)$th entry equal to $r_{[\P_i \vee \Q_j]}$. Define $\Quad_1(T)$ and $M_1(T)$ analogously over all top-vectors in $T'$ and $T''$ with $v$-coordinate equal to 1. Elements of $\Quad_k$ are of the form
\[
r_{[\P_i \vee \Q_j]}r_{[\P_{i'} \vee \Q_{j'}]} - r_{[\P_i \vee \Q_{j'}]} r_{[\P_{i'} \vee \Q_j]},
\]
where $[\P_i], [\Q_j], [\P_{i'}]$ and $[\Q_{j'}]$ all take value $k$ on their $v$-coordinate. Define
\[
\Quad(T) = \Quad_0(T) \cup \Quad_1(T).
\]

Let $\omega$ be a weight vector on $\K[\ur]$ so that $\Quad(T)$ is a Gr\"obner basis for the ideal generated by all elements of $\Quad(T)$. Such a weight vector exists by the proof of Proposition 2.6 in \cite{sullivant2007}. Since the $\A$-matrix of the toric fiber product is invertible, Theorem 2.9 of \cite{sullivant2007} implies the following proposition. Denote by $\xi_{I_{T'},I_{T''}}^*$ the pullback of $\xi_{I_{T'},I_{T''}}$. In other words, $\xi_{I_{T'},I_{T''}}^*$ is a map from the Cartesian products of the affine spaces associated to $\K[\ux]$ and $\K[\uy]$ to the affine space associated to $\K[\ur]$. If $\P$ and $\Q$ are path systems in $T'$ and $T''$ respectively whose top-vectors agree on $v$, then the $[\P \vee \Q]$ coordinate of $\xi_{I_{T'},I_{T''}}^*(\alpha, \beta)$ is $\alpha_{[\P]} + \beta_{[\Q]}$.

\begin{prop}\label{prop:tfpgrobner}
Suppose that $T$ has an internal node $v$ that is adjacent to exactly two other nodes. Let $T'$ and $T''$ be the induced trees defined above depending upon the position of $v$ within $T$. Then $\Lift(\G_1) \cup \Lift(\G_2) \cup \Quad(T)$ is a Gr\"obner basis for $I_T$ with respect to weight vector $\xi_{I_{T'},I_{T''}}^*(\omega_1, \omega_2) + \epsilon \omega$ for sufficiently small $\epsilon>0$.
\end{prop}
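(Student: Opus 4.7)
The plan is to reduce the statement to Theorem~2.9 of \cite{sullivant2007}, whose hypotheses are put in place by Proposition~\ref{prop:tfp}. That theorem produces a Gr\"obner basis of a toric fiber product $I_1 \times_{\A} I_2$ as the union of (i) lifts of given Gr\"obner bases of the two factors and (ii) a set of binomials encoding compatibility along the shared $\A$-grading; when $\A$ is invertible (as is our $2 \times 2$ matrix here), those compatibility binomials are just $2 \times 2$ determinantal relations among monomials indexed by pairs sharing a $v$-coordinate. I will argue that $\Lift(\G_1)$, $\Lift(\G_2)$, and $\Quad(T)$ are exactly the three pieces prescribed by that theorem under the identification of Proposition~\ref{prop:tfp}, and that $\xi_{I_{T'},I_{T''}}^*(\omega_1, \omega_2) + \epsilon \omega$ is the lifted weight required by its proof.

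First I would make the dictionary explicit. Under Proposition~\ref{prop:tfp} each indeterminate $r_{[\P]}$ of $\K[\ur]$ corresponds to a pair $(x_{[\P']}, y_{[\P'']})$ of indeterminates with $[\P']_v = [\P'']_v$, and every monomial of $\K[\ux] \otimes \K[\uy]$ of the form $\prod x_{[\P_i]} \otimes \prod y_{[\Rcal_i]}$ with matched $v$-coordinates corresponds to a unique monomial $\prod r_{[\P_i \cup \Rcal_i]}$ of $\K[\ur]$. For $f = \prod x_{[\P_i]} - \prod x_{[\Q_i]} \in \G_1$ with $[\P_i]_v = [\Q_i]_v$, the members of $\Lift(f)$ range over all choices of compatible $T''$-extensions $\Rcal_i$; this is precisely the Lift operation of \cite{sullivant2007}, and symmetrically for $\G_2$.

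Next I would verify that $\Quad(T)$ matches the quadratic compatibility set of the theorem. For $k \in \{0,1\}$, the monomials $r_{[\P_i \cup \Q_j]}$ with $[\P_i]_v = [\Q_j]_v = k$ form a rectangular grid $M_k$, and the $2 \times 2$ minors of $M_k$ are exactly the binomials that swap one $T''$-extension for another between two $T'$-path systems sharing the $v$-value (and vice versa). Invertibility of $\A$ ensures that these swap binomials are the only compatibility relations beyond the lifted factor relations, so $\Quad(T) = \Quad_0(T) \cup \Quad_1(T)$ realises the ``quadrics of $\A$'' of \cite{sullivant2007}. The weight $\omega$ is chosen precisely so that $\Quad(T)$ is a Gr\"obner basis of the ideal it generates, and then the $\epsilon$-perturbation lexicographically refines the pullback $\xi_{I_{T'},I_{T''}}^*(\omega_1, \omega_2)$ of the factor weights -- the standard recipe of the theorem.

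The main obstacle is bookkeeping rather than substance: I must verify that the identification $I_T \cong I_{T'} \times_{\A} I_{T''}$ of Proposition~\ref{prop:tfp} respects the gradings and weight orders in the form the toric fiber product machinery expects. The delicate step is that the homogenizing variable $t_0$ and the shared-node variable $t_v$ appear with exponent $2$ in $\xi_{I_{T'},I_{T''}}$, and one must pass to their square roots to recover $\psi_T$, exactly as in the proof of Proposition~\ref{prop:tfp}; this is a monomial rescaling on the codomain and does not affect the kernel or the term-order argument, so Theorem~2.9 applies without change. Once this identification is in hand, the proposition follows directly from \cite[Theorem~2.9]{sullivant2007}.
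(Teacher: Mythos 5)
Your proposal is correct and takes essentially the same approach as the paper: the paper's proof of Proposition~\ref{prop:tfpgrobner} is simply the observation preceding it that, by Proposition~\ref{prop:tfp} and the invertibility of $\A$, Theorem~2.9 of \cite{sullivant2007} applies directly. Your writeup just spells out the bookkeeping -- the variable dictionary, matching $\Lift$ and $\Quad$ to the sets in the cited theorem, and the harmlessness of the square-root rescaling of $t_0, t_v$ -- that the paper leaves implicit.
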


In particular, note that since the Lift operation preserves degree, and since the elements of $\Quad(T)$ are quadratic, if $\G_1$ and $\G_2$ consist of quadratic binomials, then $I_T$ has a Gr\"obner basis consisting of quadratic binomials.


\subsection{Cluster Trees}

Trees that do not have an internal node that is adjacent to 
exactly two other internal nodes do not have the toric fiber 
product structure described in the previous section. These are the 
trees whose internal nodes are comprised of one large cluster and its neighbor nodes. 
In this case, we exploit the toric fiber product structure of a subtree, 
and describe a method for lifting the Gr\"obner basis for the subtree 
to a Gr\"obner basis for the entire tree that maintains the degree of the Gr\"obner basis elements.

\begin{definition}
A rooted binary tree $T$ is called a \textit{cluster tree} if there exists a cluster $C$ in $T$ such that every internal node of $T$ is either in $C$ or in $N(C)$. Note that if $T$ is a cluster tree, then $C$ is uniquely determined by $T$.
\end{definition}

Equivalently, if $T$ has $n$ leaves, then $T$ is a cluster tree if and only if $T$ has a cluster of size $(n-3)/2$. Note that this implies that if $T$ is a cluster tree, then $T$ has an odd number of leaves. It also follows from the definition of a cluster tree that the root of $T$ must be adjacent on one side to a single leaf.

\begin{example}
The following tree is an example of a cluster tree with $\{\rho'\}$ as its distinguished cluster.
\begin{center}
\begin{tikzpicture}[scale=.5]
\draw(0,0)--(4,4);
\draw(1,1)--(2,0);
\draw(2.5,2.5)--(5,0);
\draw(4,1)--(3,0);
\draw(4,4)--(8,0);
\draw[fill](0,0) circle [radius=.1];
\draw[fill](2,0) circle [radius=.1];
\draw[fill](3,0) circle [radius=.1];
\draw[fill](5,0) circle [radius=.1];
\draw[fill](8,0) circle [radius=.1];
\draw[fill](2.5,2.5) circle [radius=.1];
\node[left] at (2.5,2.5) {$\rho'$};
\draw[fill](1,1) circle [radius=.1];
\node[left] at (1,1) {$a$};
\draw[fill](4,1) circle [radius=.1];
\node[right] at (4,1) {$b$};
\draw[fill](4,4) circle [radius=.1];
\node[left] at (4,4) {$\rho$};

\end{tikzpicture}
\end{center}
\end{example}

Let $T$ be a cluster tree with root $\rho$. Consider the tree $T'$ obtained from $T$ by deleting $\rho$ and its adjacent edges. Let $\rho'$ be the root of $T'$. Then by Proposition \ref{prop:tfp}, the CFN-MC ideal of $T'$, $I_{T'}$ is the toric fiber product $I_{U_1} \times_{\Acal} I_{U_2}$ where $U_1$ and $U_2$ are the cluster trees with root $\rho'$ and maximal clusters given by the left and right subtrees of $\rho'$ respectively. So we call $T'$ a \emph{bicluster tree}.

\begin{example}
From the previous example, $T'$, $U_1$ and $U_2$ are as follows.
\begin{center}
\begin{tikzpicture}[scale=.5]
\draw(0,0)--(2.5,2.5)--(5,0);
\draw(1,1)--(2,0);
\draw(4,1)--(3,0);
\draw[fill](0,0) circle [radius=.1];
\draw[fill](2,0) circle [radius=.1];
\draw[fill](3,0) circle [radius=.1];
\draw[fill](5,0) circle [radius=.1];
\draw[fill](2.5,2.5) circle [radius=.1];
\node[left] at (2.5,2.5) {$\rho'$};
\draw[fill](1,1) circle [radius=.1];
\node[left] at (1,1) {$a$};
\draw[fill](4,1) circle [radius=.1];
\node[right] at (4,1) {$b$};
\node[below] at (2.5,-.5) {$T'$};
\end{tikzpicture}$\qquad \qquad$ \begin{tikzpicture}[scale=.5]
\draw(0,0)--(2.5,2.5)--(5,0);
\draw(1,1)--(2,0);
\draw[fill](0,0) circle [radius=.1];
\draw[fill](2,0) circle [radius=.1];
\draw[fill](5,0) circle [radius=.1];
\draw[fill](2.5,2.5) circle [radius=.1];
\node[left] at (2.5,2.5) {$\rho'$};
\draw[fill](1,1) circle [radius=.1];
\node[left] at (1,1) {$a$};
\node[below] at (2.5,-.5) {$U_1$};
\end{tikzpicture}$\qquad \qquad$ \begin{tikzpicture}[scale=.5]
\draw(0,0)--(2.5,2.5)--(5,0);
\draw(4,1)--(3,0);
\draw[fill](0,0) circle [radius=.1];
\draw[fill](3,0) circle [radius=.1];
\draw[fill](5,0) circle [radius=.1];
\draw[fill](2.5,2.5) circle [radius=.1];
\node[left] at (2.5,2.5) {$\rho'$};
\draw[fill](4,1) circle [radius=.1];
\node[right] at (4,1) {$b$};
\node[below] at (2.5,-.5) {$U_2$};
\end{tikzpicture}
\end{center}
\end{example}

We are interested in defining when we can add a path with highest node at $\rho$ to a path system in the larger cluster tree, $T$. This motivates the following definition of \emph{root-augmentability}.

\begin{definition}
A path system $\P$ is \textit{root-augmentable} if there exists a path $P'$ between the leaves of $T$ that has the root as its top-most node and is disjoint from all paths in $\P$. In other words, $[\P]$ has root-coordinate equal to 0, but setting it equal to 1 would still yield a valid top-vector.
\end{definition}

Let $T$ be a cluster tree and $\P$ be a path system $T$. Then $\P$ is root-augmentable if and only if this path system does not already have a path with highest node $\rho$ and the restriction of the path system to $T'$ has the following property.

\begin{definition}
A path system $\P$ in a bicluster tree $T'$ is \textit{root-leaf traversable} if there exists a path from the root to some leaf that does not include any internal node that is the top-most node of some path in $\P$. 
\end{definition}

Since $T'$ is a bicluster tree, in order for a path system in $T'$ to be root-leaf traversable, one must be able to add a path from $\rho'$ through the clusters of either $U_1$ or $U_2$ to a leaf. Note that a path system is root-leaf traversable if and only if removing all of the maximal nodes of paths in the path system leaves $\rho'$ in the same connected component as some leaf of $T'$. Therefore, root-leaf traversability is well-defined over classes of path systems with the same top-set. We often say that $[\P]$ is root-leaf traversable if $\P$ is root-leaf traversable.

 Note that we cannot use the same definition for root-augmentability and root-leaf traversability. Indeed, in a cluster tree, any path system all of whose paths do not contain the root must be root-leaf traversable. Root-augmentability should be thought of as the non-trivial notion of root-leaf traversability for cluster trees.

We can now define a special type of term order on the polynomial ring of the CFN-MC ideal of a cluster tree, and its analogue for that of a bicluster tree.

\begin{definition}
Let $S$ be a cluster tree with CFN-MC ideal $I_S \subset k[\ux]$. A term order $<$ on $k[\ux]$ is \emph{liftable} if
\begin{enumerate}
\item $I_S$ has a $<$-Gr\"obner basis consisting of degree 2 binomials, and
\item $<$ is a block order on $I_S$ with blocks
\[
\{x_{[\P]} \mid \P \text{ not root augmentable}\} > \{x_{[\P]} \mid \P \text{ root augmentable}\} 
\]
where the order induced on each block is graded. 
\end{enumerate}
\end{definition}

\begin{definition}
Let $S$ be a bicluster tree with CFN-MC ideal  $I_S \subset k[\ux]$. A term order $<$ on $k[\ux]$ is \emph{liftable} if
\begin{enumerate}
\item $I_S$ has a $<$-Gr\"obner basis consisting of degree 2 binomials, and
\item $<$ is a block order on $I_S$ with blocks
\[
\{x_{[\P]} \mid \P \text{ not root-leaf traversable}\} > \{x_{[\P]} \mid \P \text{ root-leaf traversable}\} 
\]
where the order induced on each block is graded. 
\end{enumerate}
\end{definition}

If $\G$ is the $<$-Gr\"obner basis for the CFN-MC ideal of a cluster or bicluster tree for a liftable turn order $<$, then $\G$ is \emph{liftable}.

\begin{definition}
Let $S$ be a cluster tree with CFN-MC ideal $I_S \subset k[\ux]$. Let $<$ be a term order on $k[\ux]$. Let $f = \prod_{i=1}^d x_{[\P_i]} - \prod_{i=1}^d x_{[\Q_i]}$ be a binomial in $I_S$ whose leading term is $\prod_{i=1}^d x_{[\P_i]}$. We say that $f$ satisfies the \emph{liftability property} with respect to $<$ if
\[
| \{ \P \mid \P \text{ not root-augmentable} \} | \geq | \{ \Q \mid \Q \text{ root augmentable} \} |.
\]
We define the liftability property when $S$ is a bicluster tree analogously with respect to root-leaf traversability. When the monomial order has been previously specified, we just say that the polynomial satisfies the liftability property.
\end{definition}

Note that a term order $<$ is liftable if and only if it induces a quadratic Gr\"obner basis all of whose elements satisfy the liftability property with respect to $<$.

Let $I_{U_1} \subset k[\ux]$, $I_{U_2} \subset k[\uy]$, $I_{T'} \subset k[\uz]$ and $I_T \subset k[\ur]$. Note that if $T$ is the smallest cluster tree with five leaves, then $U_1$ and $U_2$ are both trees with three leaves, so $I_{U_1}$ and $I_{U_2}$ are the zero ideal. Therefore, they vacuously have liftable Gr\"obner bases. By induction, let $\omega_1, \omega_2$ be weight vectors that induce liftable orders on $I_{U_1}$ and $I_{U_2}$, respectively. Let $\G_1$ be the liftable Gr\"obner basis for $I_{U_1}$ and $\G_2$ the liftable Gr\"obner basis for $I_{U_2}$.   Let $\a$ be the weight vector on $k[\uz]$ defined by $\a (z_{[\P]}) = 1$ if $\P$ is not root-leaf traversable and $\a(z_{\P}) = 0$ if $\P$ is root-leaf traversable. 

\begin{prop}\label{prop:Tprime}
Let $T'$ be a bicluster tree.
Let $U_1$ and $U_2$ be the unique cluster trees obtained from the left and right subtrees of $T'$
such that $I_{T'} = I_{U_1} \times_{\Acal} I_{U_2}$.
There exist a weight vector $\omega$ on $k[\uz]$ and $\epsilon, k > 0$ such that  $\xi_{U_1,U_2}^* (\omega_1, \omega_2) + \epsilon \omega + k \a$ induces a liftable order on $I_{T'}$
\end{prop}

\begin{proof}
By Lemma \ref{lem:pathsintfp}, we can write any path system in the bicluster tree $T'$ as $\P \vee \Q$ where $\P$ is a path system in $U_1$, $\Q$ is a path system in $U_2$, and the top-vectors of $\P$ and $\Q$ agree on the root of $T'$. Let $f$ be an element of the Gr\"obner basis for $I_{T'}$ described in Proposition \ref{prop:tfpgrobner}. If $f = z_{[\P_1 \vee \Q_1]} z_{[\P_2 \vee \Q_2]} - z_{[\P_1 \vee \Q_2]} z_{[\P_2 \vee \Q_1]} \in \Quad_i(T')$, then 
\begin{align*}
\xi_{U_1,U_2}^*(\omega_1, \omega_2)(z_{[\P_1 \vee \Q_1]} z_{[\P_2 \vee \Q_2]}) &= \omega_1 (x_{[\P_1]}) + \omega_2 (y_{[\Q_1]}) + \omega_1 (x_{[\P_2]}) + \omega_2 (y_{[\Q_2]})\\
&= \xi_{U_1,U_2}^*(\omega_1, \omega_2)(z_{[\P_1 \vee \Q_2]} z_{[\P_2 \vee \Q_1]}).
\end{align*}

We find $\epsilon > 0$ and weight vector $\omega$ to ``break ties" for leading monomials of each element of $\Quad_i(T')$. If $f$ is a $2 \times 2$ minor of $M_1(T')$, then every variable in $f$ is not root-leaf traversable. So the choice of leading monomial does not affect the liftability property. If $f$ is a $2 \times 2$ minor of $M_0(T')$, then there is only one case in which the number of root-leaf traversable variables in the two monomials of $f$ varies. Without loss of generality, let $\P_1, \Q_1$ be root augmentable and $\P_2, \Q_2$ not. Then $\P_2 \vee \Q_2$ is not root-leaf traversable, while $\P_1 \vee \Q_1, \P_1 \vee \Q_2$ and $\P_2 \vee \Q_1$ are. So, we must select an $\omega$ so that $z_{[\P_1 \vee \Q_1]} z_{[\P_2 \vee \Q_2]}$ is chosen as the leading monomial of $f$.

We define this weight vector $\omega$ by assigning its values on the entries of $M_0(T')$. Arrange path systems $\A_1, \dots, \A_r$ in $U_1$ so that if $\A_i$ is root augmentable and $\A_j$ is not, then $i<j$. Arrange path systems $\B_1, \dots, \B_s$ in $U_2$ so that if $\B_i$ is root augmentable and $\B_j$ is not, then $i<j$.

Define $\omega(z_{[\A_i \vee \B_j]}) = 2^{i+j}$ for all $i$ and $j$. Let $i_1 < j_1$ and $i_2< j_2$. Then 
\begin{align*}
\omega(z_{[\A_{i_1} \vee \B_{j_2}]} z_{[\A_{j_1} \vee \B_{i_2}] }) & = 2^{i_1 + j_2} + 2^{i_2 + j_1}\\
&\leq 2^{j_1 + j_2 - 1} + 2^{j_1 + j_2 -1} \\
&= 2(2^{j_1 + j_2 -1} )\\
&= 2^{j_1 + j_2} \\
&< 2^{i_1 + i_2} + 2^{j_1 + j_2} \\
& = \omega(z_{[\A_{i_1} \vee \B_{i_2}]} z_{[\A_{j_1} \vee \B_{j_2}]}) 
\end{align*}

So $\omega$ chooses the correct leading term of $f \in \Quad_1(T)$. We can allow $\omega$ to be any weight vector on the entries of $M_1(T')$ that chooses leading terms as in Proposition 2.6 of \cite{sullivant2007}. Pick $\epsilon$ to be small enough so that for all $g \in \Lift(\G_1) \cup \Lift(\G_2)$,
\[
LT_{\xi_{U_1,U_2}^*(\omega_1, \omega_2)}(g) = LT_{\xi_{U_1,U_2}^*(\omega_1, \omega_2) + \epsilon \omega}(g).
\]

Now we must add $k \a$ for some $k \geq 0$ to ensure that the correct leading term is chosen for each $f \in \Lift(\G_1) \cup \Lift(\G_2)$. Without loss of generality, let 

\[
f = z_{[\P_1 \vee \Rcal_1]} z_{[\P_2 \vee \Rcal_2]} - z_{[\Q_1 \vee \Rcal_1]}z_{[\Q_2 \vee \Rcal_2]} \in \Lift(\G_1).
\]

An analysis of all possible cases shows that the only instance in which the terms of $f$ have a varying number of root-leaf traversable indices but $\xi_{U_1,U_2}^*(\omega_1, \omega_2)$ may not select the correct leading term occurs when, without loss of generality, $\P_1, \Rcal_1$ and $\Q_2$ are not root augmentable and $\P_2, \Rcal_2$ and $\Q_1$ are root augmentable. In this case, $\P_1 \vee \Rcal_1$ is not root-leaf traversable and $\P_2 \vee \Rcal_2, \Q_1 \vee \Rcal_1$ and $\Q_2 \vee \Rcal_2$ are, but $x_{[\P_1]} x_{[\P_2]} - x_{[\Q_1]} x_{[\Q_2]}$ may not have $x_{[\P_1]} x_{[\P_2]}$ as its leading term. Suppose that under the weight vector $\xi_{U_1,U_2}^*(\omega_1, \omega_2)$, $z_{[\Q_1 \vee \Rcal_1]}z_{[\Q_2 \vee \Rcal_2]}$ is the leading term of $f$. Adding sufficiently many copies of $\a$ will change this, but we must show that adding $\a$ does not change the Gr\"obner basis $\G = \Lift(\G_1) \cup \Lift(\G_2) \cup \Quad(T')$.

Define the binomial $\overline{f} = z_{[\Q_1 \vee \Rcal_1]}z_{[\Q_2 \vee \Rcal_2]} -  z_{[\P_1 \vee \Rcal_2]} z_{[\P_2 \vee \Rcal_1]}$.  Note that $\bar{f} \in \Lift(\G_1)$ with $ z_{[\Q_1 \vee \Rcal_1]}z_{[\Q_2 \vee \Rcal_2]}$ as the leading term under $\xi_{U_1,U_2}^*(\omega_1, \omega_2) + \epsilon \omega$, and both terms of $\overline{f}$ have all root-leaf traversable indices. Since $f$ and $\overline{f}$ have the same leading term, $\G - \{f\}$ is still a Gr\"obner basis under $\xi_{U_1,U_2}^*(\omega_1, \omega_2) + \epsilon \omega$. Let $\G'$ be $\G$ with all such $f \in \Lift(\G_1) \cup \Lift(\G_2)$ that violate the liftability property removed. Then every binomial in $\G'$ satisfies the liftability property, and $\G'$ is still a Gr\"obner basis.

Let $g = m_1 - m_2 \in I_{T'}$ be a binomial. Then there exists a sequence $g_1, \dots, g_r \in \G'$ so that  $g$ reduces to 0 upon division by the elements of this sequence in order. Suppose that $m_1$ is the leading term of $g$ in the order induced by $\xi_{U_1,U_2}^*(\omega_1, \omega_2) + \epsilon \omega$, but $m_2$ is the leading term in the order induced by $\xi_{U_1,U_2}^*(\omega_1, \omega_2) + \epsilon \omega + \a$. Then $m_2$ has more variables whose indices are not root-leaf traversable than $m_1$. We claim that division by the same $g_1, \dots , g_r$, possibly in a different order, still reduces $g$ to 0. To divide $g$ by one of $g_1, \dots, g_r$, pick a $g_i$ whose leading term divides $m_2$. One must exist because all of the $g_i$ satisfy the liftability property, so it is impossible to divide $m_1$ by any $g_i$ and decrease the number of root-leaf traversable variables in it. So, we may choose an element of $\G'$ to proceed with the reduction of $g$, and $\G'$ is still a Gr\"obner basis for the weight order induced by $\xi_{U_1,U_2}^*(\omega_1, \omega_2) + \epsilon \omega + \a$.
\end{proof}

For any path system $\P$ in $T$, let $\P'$ be the path system in $T'$ obtained from $\P$ by deleting any path in $\P$ that contains the root $\rho$ of $T$. Define two maps $\psi, \psi': k[\ur] \rightarrow k[\uz]$ by
\[
\psi(r_{[\P]}) = z_{[\P']}
\]
and
\[
\psi'(r_{[\P]}) = \begin{cases}
z_{[\P']} & \text{ if $[\P]_\rho = 1$, and}\\
1 & \text{ if $[\P]_\rho = 0$.}
\end{cases}
\]

Let $<$ be a monomial order on $k[\uz]$ whose existence is established by Proposition \ref{prop:Tprime} and let $\G_<$ be the liftable Gr\"obner basis that it induces on $I_{T'}$. Then define a monomial order $\prec$ on $k[\ur]$ by $\ur^{\bb} \prec \ur^{\bc}$ if and only if
\begin{itemize}[label=\raisebox{0.25ex}{\tiny$\bullet$}]
\item $\psi(\ur^{\bb}) < \psi(\ur^{\bc})$, or
\item $\psi(\ur^{\bb}) = \psi(\ur^{\bc})$ and $\psi'(\ur^{\bb}) < \psi'(\ur^{\bc})$.
\end{itemize}

In  words, to determine which of two monomials is bigger, we delete the root and see which is bigger in the order on $T'$. If those are equal, then we only look at the indices with the root-coordinate equal to 1, and then delete the root from those and see which is bigger in the order on $T'$. 

Denote by $\Fcal$ the Gr\"obner basis for $I_{T'}$ induced by the term order $<$ on $k[\ux]$. Let $f = z_{[\P_1]} z_{[\P_2]} - z_{[\Q_1]} z_{[\Q_2]} \in \Fcal$. Define the set $\Root(f)$ to be the set of all possible binomials in $I_T$ that result from treating $\P_1, \P_2, \Q_1$ and $\Q_2$ as path systems in $T$, with or without an added path with top-most node $\rho$. Specifically,
\[
\text{Root}(f) = \{ r_{i_1 [ \P_1]} r_{i_2 [\P_2]} - r_{j_1 [\Q_1]} r_{j_2 [\Q_2]}  \}
\]
where $i_1, i_2, j_1, j_2 \in \{0,1\}$ are such that
\begin{itemize}[label=\raisebox{0.25ex}{\tiny$\bullet$}]
\item $i_1 + i_2 = j_1 + j_2$, and
\item $i_1 = 1$ only if $\P_1$ is root-leaf traversable, and similarly for $i_2, j_1, j_2$.
\end{itemize}

Denote by $\Root(T) = \bigcup_{f \in G_{<}} \Root(f)$. Define the set
\[
\text{Swap}(\rho) = \{ r_{1 [\P]} r_{0[\Q]} - r_{0[\P]}r_{1[\Q]} \}
\]
where $\P$ and $\Q$ range over all root-leaf traversable path systems in $T'$. 
Define the set $\G_{\prec} = \Root(T) \cup \Swap(\rho)$.
For the sake of brevity, we use an underline
to indicate the leading term of a polynomial.

\begin{prop}\label{prop:liftable}
The term order $\prec$ described above is liftable. In particular, $G_{\prec}$ is a Gr\"obner basis for $I_T$ with respect to $\prec$.
\end{prop}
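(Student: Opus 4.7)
The strategy is to verify both liftability conditions for $\prec$ simultaneously by showing that $\G_{\prec}$ is a $\prec$-Gröbner basis for $I_T$ consisting of quadratic binomials and that $\prec$ has the required block structure. The key device is the ring homomorphism $\psi \colon k[\ur] \to k[\uz]$, which transfers ideal membership from $T$ down to $T'$, enabling use of the liftable Gröbner basis $\Fcal$ for $I_{T'}$ guaranteed by Proposition \ref{prop:Tprime}, with $\psi'$ acting as a tie-breaker that records which variables carry a root path.

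After checking the easy containment $\G_{\prec} \subseteq I_T$ (Swap binomials parametrize identically on both sides, and each lift in $\Root(f)$ of an element $f \in \Fcal$ inherits the parametric balance from $f \in I_{T'}$ together with the condition $i_1 + i_2 = j_1 + j_2$ which matches the $t_\rho$-powers), I would verify that $\psi$ maps $I_T$ into $I_{T'}$: agreement of two $T$-parametrizations forces agreement of the corresponding $T'$-parametrizations after stripping the $t_0$ and $t_\rho$ contributions. Now given any binomial $g = m_1 - m_2 \in I_T$ with $m_1 \succ m_2$, I would split into cases. If $\psi(m_1) \neq \psi(m_2)$, then $\psi(m_1) > \psi(m_2)$ under the $T'$-order, and since $\Fcal$ is a Gröbner basis for $I_{T'}$, the leading term of some $f = z_{[\P_1]}z_{[\P_2]} - z_{[\Q_1]}z_{[\Q_2]} \in \Fcal$ divides $\psi(m_1)$; pulling back picks out two factors of $m_1$ that assemble into a member of $\Root(f) \subseteq \Root(T)$ whose $\prec$-leading term divides $m_1$. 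If $\psi(m_1) = \psi(m_2)$, reindex so that $m_1 = \prod_k r_{i_k[\P_k]}$ and $m_2 = \prod_k r_{j_k[\P_k]}$ share a common multiset of $T'$-top sets. The condition $\psi'(m_1) > \psi'(m_2)$ under the graded $T'$-order, combined with $\sum_k i_k = \sum_k j_k$, yields indices $k, \ell$ with $i_k = 1, j_k = 0, i_\ell = 0, j_\ell = 1$ and $z_{[\P_k]} > z_{[\P_\ell]}$; the binomial $r_{1[\P_k]} r_{0[\P_\ell]} - r_{0[\P_k]} r_{1[\P_\ell]} \in \Swap(\rho)$ then has leading term $r_{1[\P_k]}r_{0[\P_\ell]}$ dividing $m_1$.

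The main obstacle is the combinatorial bookkeeping in the second case: one must extract a witnessing pair $(k,\ell)$ from the strict inequality $\psi'(m_1) > \psi'(m_2)$, confirm that both $[\P_k]$ and $[\P_\ell]$ are root-leaf traversable in $T'$ (guaranteed automatically by the facts that $i_k = 1$ and $j_\ell = 1$, since these make $r_{1[\P_k]}$ and $r_{1[\P_\ell]}$ valid variables), and check that the $\prec$-leading term of the Swap binomial is the one appearing in $m_1$ rather than in $m_2$. Once the Gröbner basis property is established, the quadratic condition is immediate: every element of $\Root(T) \cup \Swap(\rho)$ is visibly a quadratic binomial. The block structure on $\prec$ then follows by tracking categories through $\psi$ and $\psi'$: a variable $r_{[\P]}$ is not root-augmentable exactly when either $[\P]_\rho = 1$ (so $\psi'(r_{[\P]}) \neq 1$) or the restriction of $\P$ to $T'$ is not root-leaf traversable (so $\psi(r_{[\P]})$ lies in the larger block of $k[\uz]$), while root-augmentable variables satisfy $\psi'(r_{[\P]}) = 1$ and have $\psi$-image in the smaller block, so the block separation and graded refinement within blocks are inherited from the analogous structure in the $T'$-order.
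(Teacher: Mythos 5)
Your overall strategy coincides with the paper's: map $I_T$ down to $I_{T'}$ via $\psi$, use the liftable Gröbner basis $\Fcal$ guaranteed by Proposition~\ref{prop:Tprime} to handle comparisons in $T'$, then use $\psi'$ together with $\Swap(\rho)$ to break ties at the root. Your verification that $\G_\prec \subseteq I_T$ and that $\psi$ carries $I_T$ into $I_{T'}$, and your two-case split on whether $\psi(m_1) = \psi(m_2)$, are all consonant with the paper's reduction argument, as is your treatment of the tie-breaking case via the multiplicative property of monomial orders.

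However, there is a genuine gap in your first case, and it occurs precisely at the step you wave through with ``pulling back picks out two factors of $m_1$ that assemble into a member of $\Root(f)$.'' Having located $f = z_{[\P_1]}z_{[\P_2]} - z_{[\Q_1]}z_{[\Q_2]} \in \Fcal$ with leading term dividing $\psi(m_1)$, the two corresponding factors of $m_1$ are $r_{i_1[\P_1]}r_{i_2[\P_2]}$ for specific $i_1, i_2 \in \{0,1\}$. For this monomial to be the leading term of some element of $\Root(f)$, you must exhibit $j_1, j_2 \in \{0,1\}$ with $j_1 + j_2 = i_1 + i_2$ and $j_k = 1$ only if $\Q_k$ is root-leaf traversable. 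The existence of such $j_1, j_2$ is not automatic: it requires that the number of root-leaf traversable collections among $\{\Q_1, \Q_2\}$ be at least $i_1 + i_2$, and since $i_1 + i_2$ can be as large as the number of root-leaf traversable collections among $\{\P_1, \P_2\}$, you need the trailing term of $f$ to carry at least as many root-leaf traversable variables as the leading term. This is exactly what the \emph{liftability} of the order on $I_{T'}$ from Proposition~\ref{prop:Tprime} guarantees (the block structure places non-root-leaf-traversable variables above root-leaf-traversable ones, so a $\prec$-leading term can only have more, not fewer, of the former), and the paper invokes it explicitly at this step. You identify the analogous care needed for the $\Swap$ case but omit it in the $\Root$ case, where it is in fact the more delicate point; without it, your reduction may get stuck. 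A secondary, smaller concern: your closing claim that the block separation is ``inherited from the analogous structure in the $T'$-order'' is asserted rather than verified, and a direct verification as stated would run into trouble when two variables $r_{[\P]}, r_{[\Q]}$ have $\P', \Q'$ both root-leaf traversable with $[\P]_\rho = 1$, $[\Q]_\rho = 0$, and $z_{[\P']} < z_{[\Q']}$; the paper navigates this by checking the liftability property only on the elements of $\G_\prec$ via a short case analysis, which you should do as well.
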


\begin{proof}
First, we show that $G_{\prec}$ constitutes a Gr\"obner basis. Since, as explained in Section 3, $I_T$ is toric, it suffices to show that every binomial in $I_T$ can be reduced via the elements of $G_{\prec}$. Let $\underline{\prod_{i=1}^d r_{[\P_i]}} - \prod_{i=1}^d r_{[\Q_i]} \in I_T$. Then if we arrange the terms in each monomial as a table with the vector representing each $[\P_i]$ (resp. $[\Q_i]$) as a row, the column sums of each of these tables are equal. By the definition of $\prec$, we have
\[
\prod_{i=1}^d \psi(r_{[\P_i]}) \geq \prod_{i=1}^d \psi(r_{[\Q_i]}).
\]
For all $\P_i, \Q_i$, we have $\psi(r_{[\P_i]}) = z_{[\P'_i]}$ and $\psi(r_{[\Q_i]}) = z_{[\Q'_i]}$.

We can use the elements of $\Fcal$ to reduce $\prod_{i=1}^d z_{[\P'_i]}  - \prod_{i=1}^d z_{[\Q'_i]}$ in $I_{T'}$. The properties of the order on $I_{T'}$ induced by $<$ guarantee that (without loss of generality) if we divide by $z_{[\P'_1]}z_{[\P'_2]} - z_{[\Rcal'_1]}z_{[\Rcal'_2]}$ in this reduction, then the number of $\Rcal'_1$ and $\Rcal'_2$ that are root-leaf traversable is at least the number of $\P'_1$ and $\P'_2$ that are root-leaf traversable. Therefore, there is a corresponding element $\underline{r_{i_1 [\P'_1]}r_{i_2 [\P'_2]}} - r_{j_1 [\Rcal'_1]}r_{j_2 [\Rcal'_2]} \in \text{Root}(z_{[\P'_1]}z_{[\P'_2]} - z_{[\Rcal'_1]}z_{[\Rcal'_2]})$ with $r_{i_1 [\P'_1]}r_{i_2 [\P'_2]} = r_{[\P_1]}r_{[\P_2]}$. Note that by definition of $\prec$, $r_{i_1 [\P'_1]}r_{i_2 [\P'_2]}$ is indeed the leading term of this binomial.

This Gr\"obner basis reduction using elements $\text{Root}(T)$ ends in a binomial of the form
\[ \prod_{k=1}^d r_{i_k [\Rcal_k]} - \prod_{k=1}^d r_{j_k [\Rcal_k]}
\]
where $\sum_{k=1}^d i_k = \sum_{k=1}^d j_k$. At this point, we can use elements of $\text{Swap}(\rho)$ to match the columns of each monomial that correspond to the root. Note that it follows from the multiplicative property of monomial orders that we can always reduce the leading term this way by dividing by some element of $\text{Swap}(\rho)$. 

Now we can check that $\prec$ is a liftable term order on the elements of $G_{\prec}$. Any binomial in $\Swap(\rho)$ has one term that is root-augmentable and one that is not. So the choice of leading term of elements of $\Swap(\rho)$ does not affect the liftability property.

Let $f = r_{[\P_1]}r_{[\P_2]} - r_{[\Q_1]}r_{[\Q_2]} \in \Root(T)$. Then in particular, $\psi(r_{[\P_1]}r_{[\P_2]}) \neq \psi(r_{[\Q_1]}r_{[\Q_2]})$. There are several cases.

If $[\P_1]_{\rho} = [\P_2]_{\rho} = [\Q_1]_{\rho} = [\Q_2]_{\rho} = 1$, then neither monomial in $f$ has a root-augmentable term. So the leading term of $f$ does not affect the liftability property.

If $[\P_1]_{\rho} = [\Q_1]_{\rho}  = 1$ and $[\P_2]_{\rho} = [\Q_2]_{\rho}  = 0$, without loss of generality, then $[\P_1]$ and $[\Q_1]$ are both not root-augmentable, and $[\P_1']$ and $[\Q_1']$ both \textit{are} root-leaf traversable. If $[\P_2]$ and $[\Q_2]$ are both root-augmentable or are both not root-augmentable, then the choice of leading term of $f$ does not affect the liftability property. Suppose that $[\P_2]$ is not root-augmentable and $[\Q_2]$ is. Then under $\psi$, $z_{[\P_1']}z_{[\P_2']} > z_{[\Q_1']}z_{[\Q_2']}$ since $z_{[\P_1']}z_{[\P_2']} $ has one root-augmentable term and $z_{[\Q_1']}z_{[\Q_2']}$ has two root-augmentable terms.

If $[\P_1]_{\rho} = [\P_2]_{\rho} = [ \Q_1]_{\rho} = [\Q_2]_{\rho} = 0$, then the number of root-augmentable terms in either monomial in $f$ is the same as the number of root-leaf traversable terms in each under $\psi$. So, since $<$ is liftable, the monomial with the fewest root-augmentable terms is chosen as the leading term of $f$, as needed.
\end{proof}

\begin{proof}[Proof of Theorem \ref{thm:generatorsmain}]
If $T$ is the tree with three leaves, then $I_T = \langle 0 \rangle$ and the result holds vacuously. Let $T$ have $n > 3$ leaves. If $T$ is a cluster tree, then by induction on $n$, we may apply Proposition \ref{prop:liftable}, and $I_T$ has a liftable Gr\"obner basis. By definition of a liftable term order, this Gr\"obner basis consists of quadratic binomials. Otherwise, $I_T$ splits as a toric fiber product. So Proposition \ref{prop:tfpgrobner} and induction on $n$ imply that $I_T$ has a quadratic Gr\"obner basis with squarefree initial terms. \end{proof}

\begin{cor}\label{cor:triangulation}
The CFN-MC polytope has a regular unimodular triangulation and is normal.
\end{cor}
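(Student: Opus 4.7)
The plan is to deduce this corollary directly from Theorem \ref{thm:generatorsmain} via the standard dictionary between squarefree initial ideals of toric ideals and regular unimodular triangulations of the associated polytope. Recall that $I_T$ is the toric ideal associated to the matrix $A_T$ whose columns are the indicator vectors of top-sets of disjoint path systems in $T$ (augmented with a homogenizing row of ones), and $R_T$ is the convex hull of these columns.

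First, I would invoke the well-known theorem of Sturmfels (\cite[Corollary~8.9]{sturmfels1996}) which states that a weight vector $\omega$ on $\K[\ur]$ induces a regular triangulation $\Delta_\omega$ of the polytope $R_T$, and this triangulation is unimodular if and only if the initial ideal $\init_\omega(I_T)$ is squarefree. Theorem \ref{thm:generatorsmain} provides a Gr\"obner basis $\G$ of $I_T$ whose leading monomials are squarefree, so for the corresponding weight vector $\omega$ (which can be chosen to represent the term order that picks out $\G$) we obtain that $\init_\omega(I_T)$ is generated by squarefree monomials. Consequently, the regular triangulation $\Delta_\omega$ of $R_T$ is unimodular.

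Next, normality of $R_T$ follows immediately from the existence of a unimodular triangulation. Specifically, any lattice polytope that admits a unimodular triangulation is normal (in the sense that the semigroup generated by the columns of $A_T$ equals $\Z_{\geq 0}$-span intersected with the cone $\R_{\geq 0} A_T \cap \Z A_T$); see, e.g., \cite[Chapter~8]{sturmfels1996}. Thus $R_T$ is normal.

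The argument is essentially a citation chain, so there is no substantial obstacle: the work has already been done in proving Theorem \ref{thm:generatorsmain}, and the corollary is a formal consequence of the standard correspondence between squarefree Gr\"obner degenerations of toric ideals and regular unimodular triangulations of the underlying polytope.
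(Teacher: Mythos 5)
Your argument follows the same route as the paper's proof: invoke the Sturmfels correspondence (squarefree initial ideal of a toric ideal $\iff$ regular unimodular triangulation of the associated polytope), and then deduce normality from the existence of a unimodular triangulation. One small point: the statement of Theorem~\ref{thm:generatorsmain} in the body of the paper only asserts a quadratic Gr\"obner basis, without explicitly claiming squarefree initial terms. The paper's proof of the corollary closes this gap with a short structural observation: the only nontrivial quadratic binomials in a toric ideal coming from a $0/1$ matrix must have the form $ab-cd$ (a form $a^2-bc$ is impossible because $2\cdot\mathrm{col}(a)$ has entries in $\{0,2\}$, forcing $\mathrm{col}(b)=\mathrm{col}(c)=\mathrm{col}(a)$), so both monomials of every quadratic Gr\"obner basis element are automatically squarefree. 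You instead cite the squarefreeness directly; this is defensible because the proof of Theorem~\ref{thm:generatorsmain} (and the summary theorem in the introduction) does establish squarefree initial terms, but spelling out the $0/1$ observation makes the corollary self-contained in the way the authors intended.
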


\begin{proof}
By Theorem \ref{thm:generatorsmain}, the CFN-MC ideal has a quadratic Gr\"obner basis. 
Elements of this Gr\"obner basis correspond to elements of the kernel of a  $0/1$ matrix. 
The only quadratic binomials that  could be generators of a toric ideal have the
form $a^2 - bc$ or $ab - cd$ for some indeterminates $a,b,c,d$ in the polynomial ring.
However, the type $a^2 - bc$ is not possible in a toric ideal whose associated matrix is
a $0/1$ matrix.
Since Theorem \ref{thm:generatorsmain} shows that $I_T$ has a quadratic
Gr\"obner basis, and  the leading term of each element of the Gr\"obner basis is squarefree, 
so the leading term ideal of $I_T$ with respect to the liftable term order 
$\prec$ is generated by squarefree monomials. Therefore, it is the 
Stanley-Reisner ideal of a regular unimodular triangulation of $R_T$ \cite[Theorem~8.3]{sturmfels1996}.
\end{proof}


\section{Ehrhart Function of the CFN-MC Polytope}\label{sec:ehrhart}
In this section, we show that for a rooted binary tree $T$, 
the Hilbert series of the CFN-MC ideal $I_T$ depends only on the 
number of leaves of $T$ and not on the topology of $T$. To accomplish this, 
we use Ehrhart theory of the polytopes $R_T$. Our approach is inspired 
by the work of Buczynska and Wisniewski, who proved a similar result for 
ideals arising from the CFN model without the molecular clock \cite{buczynska2007}, 
and of Kubjas, who gave a combinatorial proof of the same result  \cite{kubjas2013}. 
We provide a brief review of some definitions in Ehrhart theory below, 
and refer the reader to \cite{beck2007} for a more complete treatment of this material.

Let $P \subset \R^n$ be any $n$-dimensional polytope with integer vertices. If $P$ has volume $V$, then 
the \emph{normalized volume} of $P$ is $n! V$.
Recall that the \emph{Ehrhart function}, $i_P(m)$, counts the integer points in dilates of $P$; that is,
\[
i_P(m) = \# (\Z^n \cap mP),
\]
where $mP$ denotes the $m$th dilate of $P$. The Ehrhart function is, in fact, a polynomial in $m$. We further define the \emph{Ehrhart series} of $P$ to be the generating function
\[
\text{Ehr}_P(t) = \sum_{m \geq 0} i_P(m) t^m.
\] 
When $P$ is full-dimensional in the ambient space $\R^n$, the Ehrhart series is of the form
\[
\text{Ehr}_P(t) = \frac{h^*(t)}{(1-t)^{n+1}},
\]
where $h^*(t)$ is a polynomial in $t$ of degree at most $n$. Furthermore, recall that since $R_T$ has a regular unimodular triangulation, the Ehrhart series of $R_T$ is equal to the Hilbert series of $I_T$ \cite[Chapter~8]{sturmfels1996}. We now introduce \emph{alternating permuations} and the \emph{Euler zig-zag numbers}, which enumerate these combinatorial objects. We show that these numbers give the normalized volume of the CFN-MC polytopes.

\begin{definition}
A permutation on $n$ letters $a_1, \dots, a_n$ is \emph{alternating} if $a_1 < a_2 > a_3 < a_4 > \dots$. The \emph{Euler zig-zag number} $E_n$ is the number of alternating permutations on $n$ letters.
\end{definition}

In other words, a permutation is alternating if its descent set is exactly the set of even numbers less than $n$. For example, in one-line notation, the permutation $13254$ is alternating, while $13245$ is not since its fourth position is not a descent. The reader should note that some texts refer to these as reverse-alternating permutations and to permutations with descent set equal to the \emph{odd} numbers less than $n$ as alternating. However, the relevant results are not affected by which terminology we choose. 

Alternating permutations are fascinating combinatorial objects in their own right. For instance, the exponential generating function for the Euler zig-zag numbers satisfies
\[
\sum_{n \geq 0} E_n \frac{x^n}{n!} = \tan x + \sec x.
\]
Furthermore, the Euler zig-zag numbers satisfy the recurrence
\[
2 E_{n+1} = \sum_{k=0}^n \binom{n}{k} E_k E_{n-k}
\]
for $n \geq 1$ with initial values $E_0 = E_1 = 1$. The sequence of Euler zig-zag numbers beginning with $E_0$ begins $1, 1, 1, 2, 5, 16, 61, 272, ...$ and can be found in the Online Encyclopedia of Integer Sequences with identification number A000111 \cite{oeis}. For a thorough treatment of topics related to alternating permutations and the Euler zig-zag numbers, we refer the reader to Stanley's ``Survey of Alternating Permutations" \cite{stanley2010}. The goal of this section is to prove the following theorem.

\begin{thm}\label{thm:volumemain}
For any rooted binary tree $T$ with $n$ leaves, the normalized volume of $R_T$ is $E_{n-1}$, the $(n-1)$st Euler zig-zag number.
\end{thm}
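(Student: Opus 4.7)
The plan is to compute the normalized volume of $R_T$ on a single convenient tree and then transfer to arbitrary trees via tree-independence.

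First, I would carry out the computation on the \emph{caterpillar} $T_{\mathrm{cat}}$ on $n \geq 3$ leaves, in which the $n-1$ interior nodes $v_1, \ldots, v_{n-1}$ form a path with $v_1$ the root and $v_{n-1}$ a cherry. The caterpillar has no cluster nodes, so by Theorem~\ref{thm:facets} (specialized to $I = \mathrm{Int}(T)$) the polytope $R_{T_{\mathrm{cat}}}$ is cut out by the inequalities $x_i \geq 0$ together with $x_i + x_{i+1} \leq 1$ for $i = 1, \ldots, n-2$. This is precisely the chain polytope of the \emph{fence} (or zig-zag) poset on the $n-1$ elements $v_1,\ldots,v_{n-1}$, whose only covering relations are between consecutive $v_i$ and $v_{i+1}$.

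Second, I would invoke Stanley's theorem that the normalized volume of the chain polytope of a finite poset $P$ equals the number of linear extensions $|\mathcal{L}(P)|$. The fence poset on $m$ elements has exactly $E_m$ linear extensions, which is a classical combinatorial interpretation of the Euler zig-zag numbers (reading off the alternating pattern of up-steps and down-steps in a linear extension). This gives that the normalized volume of $R_{T_{\mathrm{cat}}}$ equals $E_{n-1}$.

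Third, to transfer to arbitrary binary trees, I would apply Theorem~\ref{thm:hilbertseries}, which says that the Hilbert series of $I_T$ depends only on $n$. Since $R_T$ admits a regular unimodular triangulation by Corollary~\ref{cor:triangulation}, the Hilbert series of $I_T$ coincides with the Ehrhart series of $R_T$, so the $h^*$-polynomial of $R_T$ is tree-independent. Its evaluation at $t=1$ is the normalized volume, and combining with the caterpillar computation then yields $E_{n-1}$ for every binary tree on $n$ leaves.

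The main obstacle is the last step: if Theorem~\ref{thm:hilbertseries} is proved only after Theorem~\ref{thm:volumemain} in Section~6, then I would have to establish tree-independence of the normalized volume directly. The natural route, following Buczynska--Wisniewski and Kubjas in the CFN setting, is to produce for each nearest-neighbor-interchange pair $T \sim T'$ an explicit volume-preserving bijection between the maximal faces of the Gr\"obner triangulations of $R_T$ and $R_{T'}$, or equivalently between the two sets of standard monomials of the initial ideals with respect to the liftable term order $\prec$. The wrinkle relative to the CFN case is the presence of the cluster inequalities, which force extra bookkeeping whenever the support of a local move meets a cluster and hence falls outside the toric fiber product regime of Section~\ref{sec:tfp}.
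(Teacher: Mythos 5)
Your proposal is correct and is essentially the paper's own argument: both handle the caterpillar case by identifying $R_{C_n}$ with a poset polytope of the zig-zag poset $P_{n-1}$ and invoking Stanley's linear-extensions formula (you observe that $R_{C_n}$ \emph{is} the chain polytope of $P_{n-1}$ outright, while the paper constructs the unimodular affine map $\bx \mapsto D\bx + \a$ onto the order polytope $\mathcal{O}(P_{n-1})$ --- since the two poset polytopes are Ehrhart-equivalent with normalized volume $E_{n-1}$, either identification works, and yours skips the coordinate change), and both then transfer to arbitrary trees by the tree-independence of the Ehrhart polynomial. Your worry about circularity is moot: in the paper Theorem~\ref{thm:hilbertseries} is proved \emph{before} the proof of Theorem~\ref{thm:volumemain}, via the explicit lattice-point bijection $\phi^{T,T'}_m\colon mR_T\cap\Z^{n-1}\to mR_{T'}\cap\Z^{n-1}$ for NNI-neighbors $T,T'$ (Corollary~\ref{cor:latticepointbijection}), so you may invoke it exactly as you propose and never need the fallback of matching Gr\"obner simplices or standard monomials.
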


The proof of  Theorem \ref{thm:volumemain} has two parts. 
First, we give a unimodular affine isomorphism between the CFN-MC polytope associated to the \emph{caterpillar tree} and the order polytope of the so-called ``zig-zag poset", which is known to have the desired normalized volume \cite{stanley2010}. The second, and more difficult, part is to 
show that the volume and Ehrhart polynomial of the CFN-MC polytope are the 
same for any $n$-leaf tree by giving a bijection between the lattice points 
in $(mR_T \cap \Z^{n-1})$ and $(mR_{T'} \cap \Z^{n-1})$ where $T$ and $T'$ 
are related by a single tree rotation.  Since any two binary trees on
$n$ leaves are connected by a sequences of rotations, this 
proves the theorem.

\subsection{Caterpillar Trees}
For a class of trees known as caterpillar trees, we can find a unimodular affine map between the CFN-MC polytope and the order polytope of a well-understood poset. 

\begin{definition}
A \textit{caterpillar tree} $C_n$ on $n$ leaves is the unique rooted tree topology with exactly one cherry. 
\end{definition}

\begin{definition}
The \textit{zig-zag poset} $P_n$ is the poset on underlying set $\{p_1, \dots, p_n\}$ with  the cover relations $p_i < p_{i+1}$ for $i$ odd and $p_{i} > p_{i+1}$ for $i$ even. Note that these are exactly the inequalities that appear in the definition of an alternating permutation. The \textit{order polytope} of the zig-zag poset $\mathcal{O}(P_n)$ is the set of all $\bv \in \R^n$ that satisfy $0 \leq v_i \leq 1$ for all $i$ and $v_i \leq v_j$ if $p_i < p_j$ in $P_n$.
\end{definition}

Order polytopes for arbitrary posets have been the object of considerable study, and are discussed in detail in \cite{stanley1986}. For instance, the order polytope of $P_n$ is also the convex hull of all $(v_1, \dots, v_n) \in \{0,1\}^n$ that correspond to labelings of $P_n$ that are weakly consistent with the partial order on $\{p_1, \dots, p_n\}$. 

In the case of $\mathcal{O}(P_n)$, the facet defining inequalities are those of the form
\begin{align} \begin{split}
-v_i  \leq 0 & \text{ for $i \leq n$ odd} \\
v_i  \leq 1 & \text{ for $i \leq n$ even}\\
v_i - v_{i+1}  \leq 0 & \text{ for $i \leq n-1$ odd, and} \\
-v_i + v_{i+1}  \leq 0 & \text{ for $ i  \leq  n-1$ even.} \end{split} \label{eqn:orderpolytopeineqs}
\end{align}
Note that the inequalities of the form $-v_i \leq 0$ for $i$ even and $v_i \leq 1$ for $i$ odd are redundant. 

Every order polytope has a unimodular triangulation whose simplices are in bijection with linear extensions of the underlying poset \cite{stanley1986}. In the case of the zig-zag poset $P_n$, the linear extensions of $P_n$ are in bijection with alternating permutations, since we can simply take each alternating permutation to be a labeling of the poset \cite{stanley2010}. These facts together imply the following proposition.

\begin{prop}
The order polytope $\mathcal{O}(P_n)$ has normalized volume $E_n$, the $n$th Euler zig-zag number.
\end{prop}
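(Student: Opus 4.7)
The plan is to assemble the statement directly from the two standard facts recalled in the paragraph preceding the proposition, so this is essentially a verification rather than a new argument. First I would invoke Stanley's canonical triangulation of $\mathcal{O}(P_n)$ from \cite{stanley1986}: this triangulation is unimodular, meaning that every maximal simplex has normalized volume equal to $1$, and its maximal simplices are in bijection with the linear extensions of the underlying poset $P_n$. Consequently the normalized volume of $\mathcal{O}(P_n)$ is exactly the number of linear extensions of the zig-zag poset on $n$ elements.

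Next I would identify the set of linear extensions of $P_n$ with the set of alternating permutations of $\{1,\dots,n\}$. A linear extension of $P_n$ is a bijection $\sigma : \{p_1,\dots,p_n\} \to \{1,\dots,n\}$ satisfying $\sigma(p_i) < \sigma(p_j)$ whenever $p_i < p_j$ in $P_n$. Setting $a_i = \sigma(p_i)$, the cover relations
\[
p_1 < p_2 > p_3 < p_4 > \cdots
\]
defining $P_n$ translate to
\[
a_1 < a_2 > a_3 < a_4 > \cdots ,
\]
which is precisely the defining condition for $a_1 a_2 \cdots a_n$ to be an alternating permutation. Thus the number of linear extensions of $P_n$ equals $E_n$ by definition.

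Combining the two paragraphs yields the normalized volume $E_n$ as required. There is no serious obstacle here; the only point requiring any care is keeping the convention straight on what counts as ``alternating'' (the paper uses the convention $a_1 < a_2 > a_3 < \cdots$, matching the cover relations of $P_n$ as defined), but once the conventions line up the proof reduces to citing \cite{stanley1986} for the triangulation and reading off the bijection from the definitions of $P_n$ and $E_n$.
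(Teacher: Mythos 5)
Your argument is correct and matches the paper's reasoning: the proposition is obtained directly from Stanley's unimodular triangulation of the order polytope (whose simplices are indexed by linear extensions) together with the bijection between linear extensions of $P_n$ and alternating permutations, which is exactly what the paper states in the paragraph just before the proposition.
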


\begin{example}
Consider the zig-zag poset $P_4$ pictured in Figure \ref{fig:zigzag}. The matrix whose columns are the vertices of $\mathcal{O}(P_4)$ and the facet-defining hyperplanes of $\mathcal{O}(P_4)$ are given below. The volume of $\mathcal{O}(P_4)$ is $E_4 = 5.$

\begin{minipage}{.4\textwidth}
\[
\begin{bmatrix}
0 & 1 & 0 & 0 & 0 & 1 & 1 & 0 \\
1 & 1 & 0 & 1 & 1 & 1 & 1 & 0 \\
0 & 0 & 0 & 1 & 0 & 1 & 0 & 0\\
1 & 1 & 1 & 1 & 0 & 1 & 0 & 0
\end{bmatrix}
\]
\end{minipage}
\begin{minipage}{.3\textwidth}
\begin{align*}
-v_1 & \leq 0 \\
v_2 & \leq 1\\
-v_3 & \leq 0 \\
v_4 & \leq 1
\end{align*}
\end{minipage}\begin{minipage}{.3\textwidth}
\begin{align*}
v_1 - v_2 & \leq 0 \\
-v_2 + v_3 & \leq 0 \\
v_3 - v_4 & \leq 0
\end{align*}
\end{minipage}

\begin{figure}
\begin{center}
\begin{tikzpicture}
\node at (0,0) {};
\node at (0,2) {};
\draw(1,0.5)--(2,1.5)--(3,0.5)--(4,1.5);
\draw[fill] (1,.5) circle [radius=.05];
\node[left] at (1,.5) {$p_1$};
\draw[fill] (2,1.5) circle [radius=.05];
\node[left] at (2,1.5) {$p_2$};
\draw[fill] (3,.5) circle [radius=.05];
\node[left] at (3,.5) {$p_3$};
\draw[fill] (4,1.5) circle [radius=.05];
\node[left] at (4,1.5) {$p_4$};
\end{tikzpicture}
\end{center}
\caption{The zig-zag poset $P_4$}
\label{fig:zigzag}
\end{figure}
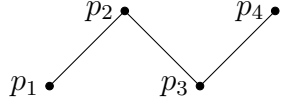

\end{example}

\begin{prop}\label{prop:cattoorderpolytope}
Let $D$ be the $n \times n$ diagonal matrix with $D_{ii} = 1$ if $i$ is odd and $D_{ii} = -1$ if $i$ is even. Let $\a$ be the vector in $\R^n$ with $a_i = 0$ if $i$ is odd and $a_i = 1$ if $i$ is even. The rigid motion of $\R^n$ defined by
\[
\phi(\bx) = D \bx + \a
\]
is a unimodular affine isomorphism from $R_{C_{n+1}}$ to $\mathcal{O}(P_n)$.
\end{prop}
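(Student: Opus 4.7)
Since $D$ is diagonal with entries $\pm 1$ and $\a \in \Z^n$, the affine map $\phi$ is a bijection of $\R^n$ with integer linear part of determinant $\pm 1$; so $\phi$ is automatically unimodular, and the content of the proposition is that $\phi(R_{C_{n+1}}) = \mathcal{O}(P_n)$. My plan is to verify this equality of polytopes by matching up facet-defining inequalities, using the facet description of $R_{C_{n+1}}$ from the Corollary following Theorem \ref{thm:facets} on one side and the inequalities (\ref{eqn:orderpolytopeineqs}) on the other.

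First I would label the $n$ interior vertices of $C_{n+1}$ as $1, 2, \ldots, n$ along the spine of the caterpillar (from the root down to the cherry), so that interior vertices $i$ and $i+1$ are adjacent. The key combinatorial observation is that $C_{n+1}$ has no cluster nodes: each non-cherry interior vertex has exactly one interior child (its other child being a leaf), and the cherry has none, so no interior vertex is incident to three other interior vertices. Hence the cluster inequalities in the Corollary are vacuous, and $R_{C_{n+1}}$ is cut out by the $2n-1$ inequalities
\[
x_i \geq 0 \quad (1 \leq i \leq n), \qquad x_i + x_{i+1} \leq 1 \quad (1 \leq i \leq n-1).
\]

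Next I would substitute $x_i = v_i$ for $i$ odd and $x_i = 1 - v_i$ for $i$ even, which is precisely $\phi^{-1}$, into each of these inequalities. The constraint $x_i \geq 0$ becomes $-v_i \leq 0$ when $i$ is odd and $v_i \leq 1$ when $i$ is even. The constraint $x_i + x_{i+1} \leq 1$ becomes $v_i - v_{i+1} \leq 0$ when $i$ is odd and $-v_i + v_{i+1} \leq 0$ when $i$ is even. Comparing with (\ref{eqn:orderpolytopeineqs}), this is exactly the facet description of $\mathcal{O}(P_n)$, so $\phi(R_{C_{n+1}}) = \mathcal{O}(P_n)$.

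I do not foresee any serious obstacle: the proof is a direct verification once one makes the structural observation that caterpillars have no clusters. The only minor thing to be careful about is the parity convention at the endpoints of the spine, but since the two endpoint interior vertices each contribute only a non-negativity inequality (and only one adjacency inequality on their interior side), the computation goes through regardless of the parity of $n$.
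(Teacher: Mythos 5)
Your proof is correct and follows essentially the same approach as the paper: verify unimodularity from $\det D = \pm 1$, then match the facet description of $R_{C_{n+1}}$ (non-negativity and adjacency inequalities only, since caterpillars have no clusters) with the inequalities (\ref{eqn:orderpolytopeineqs}) via the coordinate substitution. You make explicit the observation that the caterpillar has no cluster nodes, which the paper leaves implicit, but the argument is the same.
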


\begin{proof}
First note that $\det D = \pm 1$, so $\phi$ is a unimodular affine isomorphism. The image of $\bx$ under $\phi$ is coordinate-wise by
\[
\phi(\bx)_i = \begin{cases}
x_i & \text{ if $i$ is odd, and} \\
1- x_i & \text{if $i$ is even.}
\end{cases}
\]
By Corollary, \ref{cor:finalfacets}, the facet-defining inequalities of $R_{C_{n+1}}$ are of the form $-x_i \leq 0$ for $i \leq n$ and $x_i + x_{i+1} \leq 1$ for $i \leq n-1$. Substitution $\phi(\bx)$ into each of these equations yields exactly the inequalities in Equation (\ref{eqn:orderpolytopeineqs}), as needed.
\end{proof}

\begin{cor}\label{cor:cattreevolume}
The Ehrhart functions of the CFN-MC polytope $R_{C_{n+1}}$ and the order polytope $\mathcal{O}(P_n)$ are equal for all $n$. This further implies that the normalized volume of $R_{C_{n+1}}$ is the $n$th Euler zig-zag number, $E_n$.
\end{cor}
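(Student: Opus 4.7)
The plan is to lift the unimodular affine isomorphism $\phi$ from Proposition \ref{prop:cattoorderpolytope} to a bijection of lattice points in every dilate, and then read off the volume from the leading coefficient of the common Ehrhart polynomial.

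First I would observe that $\phi(\bx) = D\bx + \a$ is not a linear map, so I cannot directly claim $\phi(mR_{C_{n+1}}) = m\mathcal{O}(P_n)$. Instead, for each positive integer $m$, I define the affine map $\phi_m(\bx) := D\bx + m\a$. If $\bx \in mR_{C_{n+1}}$, write $\bx = m\by$ with $\by \in R_{C_{n+1}}$; then $\phi_m(\bx) = m(D\by + \a) = m\phi(\by) \in m\mathcal{O}(P_n)$, and the same scaling argument in reverse shows $\phi_m$ is a bijection $mR_{C_{n+1}} \to m\mathcal{O}(P_n)$.

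Next I would check that $\phi_m$ sends lattice points to lattice points bijectively. The matrix $D$ has integer entries and determinant $\pm 1$, so $D \in \mathrm{GL}_n(\Z)$; the translation vector $m\a$ lies in $\Z^n$ since $\a \in \{0,1\}^n$. Hence $\phi_m$ and its inverse both preserve $\Z^n$, and therefore $\phi_m$ restricts to a bijection $\Z^n \cap mR_{C_{n+1}} \to \Z^n \cap m\mathcal{O}(P_n)$. Counting both sides gives $i_{R_{C_{n+1}}}(m) = i_{\mathcal{O}(P_n)}(m)$ for every $m \geq 0$, so the Ehrhart polynomials (and equivalently, via Corollary \ref{cor:triangulation} together with \cite[Chapter 8]{sturmfels1996}, the Hilbert series of $I_{C_{n+1}}$) agree with those of $\mathcal{O}(P_n)$.

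Finally, the normalized volume of any $d$-dimensional lattice polytope is $d!$ times the leading coefficient of its Ehrhart polynomial, so equality of Ehrhart polynomials yields equality of normalized volumes. Since $\mathcal{O}(P_n)$ is $n$-dimensional with normalized volume $E_n$ (its unimodular triangulation is indexed by linear extensions of $P_n$, which are exactly the alternating permutations on $n$ letters), the same holds for $R_{C_{n+1}}$. There is no real obstacle here: the work was done in establishing Proposition \ref{prop:cattoorderpolytope}, and the only subtle point is to dilate before applying the affine map so that the translation scales by $m$, keeping everything integral.
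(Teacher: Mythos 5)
Your proof is correct and follows essentially the same route as the paper: push the unimodular affine isomorphism of Proposition \ref{prop:cattoorderpolytope} through to a lattice-point bijection on every dilate, then read off normalized volume from the leading Ehrhart coefficient. You are in fact slightly more careful than the paper's own (terse) proof, which asserts that $\phi$ itself is ``a lattice-point preserving transformation from $mR_{C_{n+1}}$ to $m\mathcal{O}(P_n)$'' without noting that the translation vector $\a$ must be scaled to $m\a$ for this to hold when $m>1$; your introduction of $\phi_m$ fixes that gap.
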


\begin{proof}
The Ehrhart functions $i_{R_{C_{n+1}}}(m)$ and $i_{\mathcal{O}(P)}(m)$ are equal because $\phi$ is a lattice-point preserving transformation from $mR_{C_{n+1}}$ to $m\mathcal{O}(P_n)$. The leading coefficient of the Ehrhart polynomial of a polytope is the volume of that polytope. This volume is $\frac{E_n}{n!}$ for $\mathcal{O}(P_n)$ and so, for $R_{C_{n+1}}$ as well \cite{stanley2010}. So the normalized volume of $R_{C_{n+1}}$ is $E_n$.
\end{proof}

\subsection{The Ehrhart Function and Rotations}

We give an explicit bijection between the lattice points in the $m$-th 
dilate of $R_T$ and of $R_{T'}$ where $m \in \Z_+$ and $T$ and $T'$ differ by 
one rotation. 
This shows that the Ehrhart polynomials of $R_T$ and $R_{T'}$ are the same. 
Since any tree can be obtained from any other tree by a finite sequence of 
rotations, this along with Corollary \ref{cor:cattreevolume} proves Theorem \ref{thm:volumemain}.

Let $b$, $c$, $e$ be three consecutive nodes of a tree $T$, where $c$ is a descendant of $b$, and $e$ is a descendant of $c$. Note that node $e$ need not be an internal node of $T$. There is a unique 
\emph{rotation}
 associated to the triple $(b,c,e)$; namely, this move prunes $c$, the edge $ce$ and the 
 $e$-subtree from below $b$, and reattaches these on the other edge 
immediately below $b$ to yield a new tree, $T'$. 
This rotation, and its effect on the internal structure of $T$ is 
depicted in Figure \ref{fig:NNI}. Note that it is also possible for $b$ to be the root, in which case node $a$ in this figure
does not exist. A rotation splits the vertices of 
$R_T$ into two natural categories: the ones that are also vertices of $R_{T'}$ and the ones that are not.

\begin{figure}
\begin{center}
\begin{tikzpicture}
\draw (2,2)--(5,5);
\draw (2,2)--(1.5,1);
\draw (2,2)--(2.5,1);
\draw (3,3)--(4,2);
\draw (4,2)--(4.5,1);
\draw (4,2)--(3.5,1);
\draw (4,4)--(6,2);
\draw (6,2)--(5.5,1);
\draw (6,2)--(6.5,1);
\draw[dotted] (1.5,1)--(1.25,.5);
\draw[dotted] (2.5,1)--(2.75,.5);
\draw[dotted] (3.5,1)--(3.25,.5);
\draw[dotted] (4.5,1)--(4.75,.5);
\draw[dotted] (5.5,1)--(5.25,.5);
\draw[dotted] (6.5,1)--(6.75,.5);
\draw[dotted] (5,5)--(5.5,5.5);
\draw[fill] (5,5) circle  [radius =.05];
\node[left] at (5,5) {$a$};
\draw[fill] (4,4) circle  [radius =.05];
\node[left] at (4,4) {$b$};
\draw[fill] (3,3) circle  [radius =.05];
\node[left] at (3,3) {$c$};
\draw[fill] (2,2) circle  [radius =.05];
\node[left] at (2,2) {$d$};
\draw[fill] (4,2) circle  [radius =.05];
\node[left] at (4,2) {$e$};
\draw[fill] (6,2) circle  [radius =.05];
\node[right] at (6,2) {$f$};
\node at (4,0) {$T$};
\end{tikzpicture} $\qquad$ \begin{tikzpicture}
\draw (2,2)--(5,5);
\draw (2,2)--(1.5,1);
\draw (2,2)--(2.5,1);
\draw (5,3)--(4,2);
\draw (4,2)--(4.5,1);
\draw (4,2)--(3.5,1);
\draw (4,4)--(6,2);
\draw (6,2)--(5.5,1);
\draw (6,2)--(6.5,1);
\draw[dotted] (1.5,1)--(1.25,.5);
\draw[dotted] (2.5,1)--(2.75,.5);
\draw[dotted] (3.5,1)--(3.25,.5);
\draw[dotted] (4.5,1)--(4.75,.5);
\draw[dotted] (5.5,1)--(5.25,.5);
\draw[dotted] (6.5,1)--(6.75,.5);
\draw[dotted] (5,5)--(5.5,5.5);
\draw[fill] (5,5) circle  [radius =.05];
\node[left] at (5,5) {$a$};
\draw[fill] (4,4) circle  [radius =.05];
\node[left] at (4,4) {$b$};
\draw[fill] (5,3) circle  [radius =.05];
\node[left] at (5,3) {$c$};
\draw[fill] (2,2) circle  [radius =.05];
\node[left] at (2,2) {$d$};
\draw[fill] (4,2) circle  [radius =.05];
\node[left] at (4,2) {$e$};
\draw[fill] (6,2) circle  [radius =.05];
\node[right] at (6,2) {$f$};
\node at (4,0) {$T'$};
\end{tikzpicture} 
\end{center}
\caption{A rotation performed by pruning $c$ and its right subtree and reattaching in the right subtree of $b$.}
\label{fig:NNI}
\end{figure}
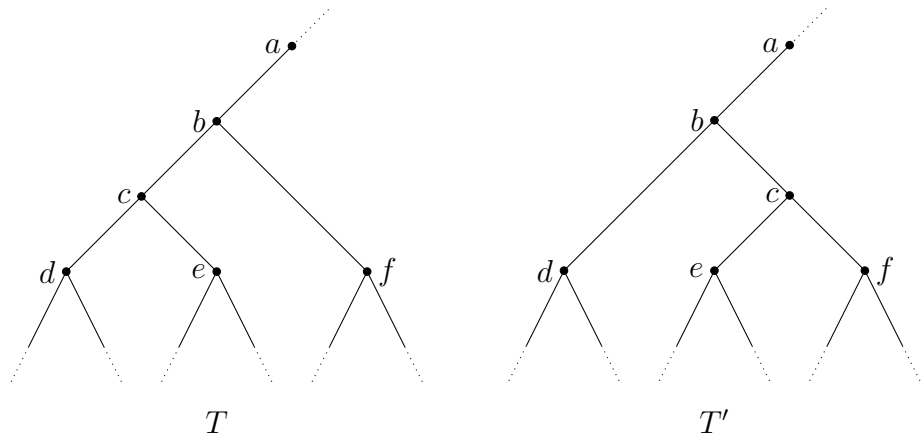

\begin{definition}
Let the tree $T'$ be obtained from $T$ by the rotation associated to $(b,c,e)$ as pictured in Figure \ref{fig:NNI}. A vertex of $R_T$ is \textit{maintaining} if it is also a vertex of $R_{T'}$. A vertex of $R_T$ is \textit{nonmaintaining} if it is not a vertex of $R_{T'}$.
\end{definition}

We use the following definition to give a characterization of the maintaining and nonmaintaining vertices of $R_T$.

\begin{definition}
Let $S$ be the top-set of a path system in $T$. Let $x$ be an internal node of $T$. Then $x$ is \textit{blocked} in $S$ if for every path from $x$ to a leaf descended from $x$, there exists a $y \in S$ that lies on this path.
\end{definition}

Note that if $x$ is blocked in $S$, then for every path system $\P$ that realizes $S$, we cannot add another path to $\P$ with top-most node above $x$ that passes through $x$.

\begin{example} Let $T$ be the tree pictured in Figure \ref{fig:blocked} with a path system $\P$ drawn in bold. Note that up to a swap of the leaves below node $h$, $\P$ is the only path system in $T$ that realizes top-set $\{a,d,e,g\}$.

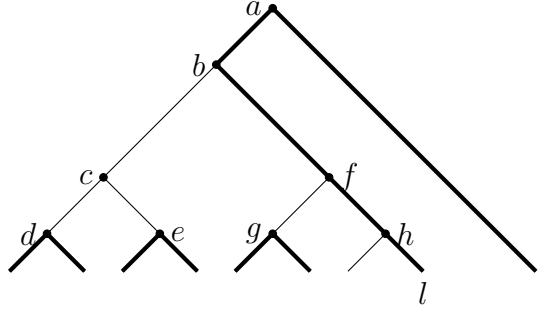
\begin{figure}
\begin{center}
\begin{tikzpicture}[scale=.5]
\draw (0,0)--(7,7)--(14,0);
\draw (1,1)--(2,0);
\draw (2.5,2.5)--(5,0);
\draw (4,1)--(3,0);
\draw (5.5,5.5)--(11,0);
\draw (10,1)--(9,0);
\draw (8.5,2.5)--(6,0);
\draw (7,1)--(8,0);
\draw[fill] (7,7) circle [radius = .1];
\node[left] at (7,7) {$a$};
\draw[fill] (5.5,5.5) circle [radius = .1];
\node[left] at (5.5,5.5) {$b$};
\draw[fill] (2.5,2.5) circle [radius = .1];
\node[left] at (2.5,2.5) {$c$};
\draw[fill] (1,1) circle [radius = .1];
\node[left] at (1,1) {$d$};
\draw[fill] (4,1) circle [radius = .1];
\node[right] at (4,1) {$e$};
\draw[fill] (8.5,2.5) circle [radius = .1];
\node[right] at (8.5,2.5) {$f$};
\draw[fill] (7,1) circle [radius = .1];
\node[left] at (7,1) {$g$};
\draw[fill] (10,1) circle [radius = .1];
\node[right] at (10,1) {$h$};
\node[below] at (11,0) {$l$};
\draw[ultra thick] (0,0)--(1,1)--(2,0);
\draw[ultra thick] (3,0)--(4,1)--(5,0);
\draw[ultra thick] (6,0)--(7,1)--(8,0);
\draw[ultra thick] (11,0)--(5.5,5.5)--(7,7)--(14,0);
\end{tikzpicture}
\end{center}
\caption{Nodes $a,c,d,e$ and $g$ are the nodes that are blocked in top-set $\{a,d,e,g\}$.}
\label{fig:blocked}
\end{figure}

By definition, $a,d,e$ and $g$ are all blocked in $\{a,d,e,g\}$. Furthermore, node $c$ is blocked in $\{a,d,e,g\}$ since any path from $c$ to a leaf descended from $c$ passes through either $d$ or $e$, but $d,e \in \{a,d,e,g\}$. On the other hand, $f$ is not blocked in $\{a,d,e,g\}$, since there is a path from $f$ to leaf $l$ that only passes through $h$, and $h \not\in \{a,d,e,g\}$. Similarly, $b$ is not blocked in $\{a,d,e,g\}$.
\end{example}

\begin{prop} \label{prop:blocked} Let $[\P]$ be a vertex of $R_T$ with associated top-set $V$. Let $a,b,c,d,e$ and $f$ be as in tree $T$ in Figure \ref{fig:NNI}.
\begin{enumerate}[label=(\roman*)]
\item If $b,c \not\in V$, then $[\P]$ is maintaining.
\item If $b \in V$, then $[\P]$ is maintaining if and only if $d$ is not blocked in $V$.
\item If $c \in V$, then $[\P]$ is maintaining if and only if $f$ is not blocked in $V$.
\end{enumerate}
\end{prop}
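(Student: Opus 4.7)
The plan is to carry out a local analysis in the region of $T$ where the NNI move acts, since the move only replaces the edges $bf, cd$ of $T$ by $bd, cf$ in $T'$ while leaving the subtrees rooted at $d, e, f$ and the portion of $T$ above $b$ untouched. For any path system $\P$ realizing top-set $V$, each internal vertex uses $0$ or exactly $2$ of its incident edges, so the edge usages at $b$ and $c$ fall into a short list of patterns. Moreover, $b \in V$ and $c \in V$ cannot hold simultaneously: if $b$ is a top, the path with top $b$ uses $\{bc, bf\}$ at $b$ and therefore also uses $bc$ at $c$, leaving only one of $\{cd, ce\}$ free at $c$, which rules out $c$ being a top. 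So the three cases are disjoint and exhaustive.

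For Case (i), the admissible $(b,c)$ usage patterns are: both isolated; $b$ uses $\{ab, bf\}$ with $c$ isolated; $b$ uses $\{ab, bc\}$ with $c$ using $\{bc, cd\}$; or $b$ uses $\{ab, bc\}$ with $c$ using $\{bc, ce\}$. In each pattern I would exhibit an explicit local rewiring in $T'$ keeping the top-set fixed: use the same path system; reroute the segment $b \to f$ as $b \to c \to f$; reroute $b \to c \to d$ as $b \to d$; or leave $b \to c \to e$ unchanged. Since the rest of the tree is untouched and the rewiring is forced to preserve both endpoints of each modified path, disjointness and the top-set are both preserved, so $[\P]$ is maintaining.

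For Case (ii), let $\pi \in \P$ be the path with top $b$; it uses $\{bc, bf\}$ at $b$ and continues into the $f$-subtree via $bf$, and into either the $d$-subtree (sub-case ii-a, via $cd$) or the $e$-subtree (sub-case ii-b, via $ce$) through $c$. In sub-case (ii-a) the $d$-subtree leg of $\pi$ is itself a $V$-avoiding descent from $d$, so $d$ is automatically not blocked in $V$, and a direct swap gives $\pi' \in \P'$ whose legs are $b \to d \to \cdots$ (copying $\pi$'s old $d$-subtree descent) and $b \to c \to f \to \cdots$ (threading $c$ into what was $\pi$'s $f$-subtree leg). In sub-case (ii-b) the edge $cd$ is unused in $\P$, and my plan is to show, by induction down any $V$-avoiding descent from $d$, that every intermediate vertex on it must be isolated in $\P$: it is not a top (not in $V$) and cannot be a pass-through since its parent edge is forced unused by the previous step. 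Hence ``$d$ not blocked in $V$'' is exactly the condition for inserting a new leg $b \to d \to \cdots$ in $T'$ without disturbing $\P$; given one, keep $\pi$'s $e$-subtree leg via $ce$ (which still exists in $T'$) and drop $\pi$'s $f$-subtree leg to obtain a valid $\pi'$. Conversely, if $V$ is realized in $T'$, the $bd$-leg of the top-$b$ path there immediately witnesses that $d$ is not blocked in $V$.

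Case (iii) is handled by an essentially symmetric analysis, with the path $\sigma$ with top $c$ (which uses $\{cd, ce\}$ in $T$ but must use $\{ce, cf\}$ in $T'$) playing the role of $\pi$ and with the $f$-subtree and the condition ``$f$ not blocked in $V$'' replacing their counterparts. The two sub-cases are $b$ isolated (where the same isolation-propagation argument applies in the $f$-subtree) and $b$ uses $\{ab, bf\}$ via some auxiliary path $\tau$, in which case $\tau$'s $f$-subtree leg already witnesses that $f$ is not blocked, and in $T'$ one reroutes $\tau$ through $bd$ using $\sigma$'s old $d$-subtree leg (whose edges are freed by dropping $\sigma$) and reroutes $\sigma$ through $cf$ using $\tau$'s old $f$-subtree leg. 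The main obstacle I expect is the edge-disjointness verification after each local rewiring, and in particular the inductive ``free-edges'' argument in sub-cases (ii-b) and (iii-a); once that is in hand the remaining bookkeeping is routine.
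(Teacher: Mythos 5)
Your proposal is correct and follows essentially the same local-rewiring strategy as the paper's proof: case analysis on the edge-usage pattern at $b$ and $c$, followed by explicit path reconstructions in $T'$. The main difference is that you make explicit the ``isolation propagation'' induction showing that the vertices along a $V$-avoiding descent from $d$ (resp.\ $f$) are all isolated in $\P$ once the edge $cd$ (resp.\ $bf$) is unused; the paper compresses this into the assertion ``we may assume that $\hat P$ is contained in $P$,'' so your version usefully fills in that elided justification.
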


\begin{proof}
To prove (i), let $b,c \not\in V$. Then since all paths in $\P$ are disjoint, there can be at most one $P \in \P$ that is not contained entirely in the $d$-, $e$- or $f$-subtrees, or in $T$ without the $b$-subtree. If no such $P$ exists, then $\P$ is still a path system that realizes $V$ in $T'$, as needed. In particular, if $b$ is the root of $T$, then no such path can exist since in this case, every path is contained in the $b$-subtree. Now suppose that such a $P \in \P$ does exist. Then it must be the case that $b$ is the direct descendent of some node $a$. We can modify P to be a path $P'$ in $T'$ as follows.

If $ab, bc, cd \in P$, then let $P'$ be the path in $T'$ obtained from $P$ by replacing edges $bc$ and $cd$ with edge $bd$, and leaving all others the same. 
 If $ab, bc, ce \in P$, then $P' = P$ is also a path in $T'$, and we do not need to modify it. 
  If $ab, bf \in P$, then let $P'$ be the path in $T'$ obtained by replacing edge $bf$ in $P$ with edges $bc$ and $cf$, and leaving all others the same. 
Since $b,c \not\in V$, these are the only cases. Then $(\P - \{P\})\cup\{P'\}$ is a path system in $T'$ and $[(\P - \{P\})\cup\{P'\}] = [\P]$. So $[\P]$ is maintaining.

To prove (ii), let $b \in V$. Suppose that $[\P]$ is maintaining. Then there exists a path system $\P'$ in $T'$ such that $[\P'] = [\P]$. Let $P \in \P'$ have top-most node $b$. Then $bd \in P$ and $P$ includes a path $\bar{P}$ from $d$ to a leaf descended from $d$. Since all paths in $\P'$ are disjoint, no path in $\P'$ has its top-most node along $\bar{P}$. So $d$ is not blocked in $V$.

Suppose that $d$ is not blocked in $V$. Then there exists a path $\hat{P}$ from $d$ to a leaf descended from $d$ with none of its nodes in $V$. Let $\P$ be the path system in $T$ that realizes $V$, and let $P \in \P$ have top-most node $b$. We may assume that $\hat{P}$ is contained in $P$. $P$ also includes a path $\bar{P}$ from $f$ to a leaf descended from $f$. Let $P' = \hat{P} \cup \bar{P} \cup \{bd, bc, cf\}$. Then $\P' = (\P - \{P\})\cup\{P'\}$ is a path system in $T'$ with $[\P'] = [\P]$. So $[\P]$ is maintaining. 

To prove (iii), let $c \in V$. Suppose that $[\P]$ is maintaining. Then there exists a path system $\P'$ in $T'$ such that $[\P'] = [\P]$. Let $P \in \P'$ have top-most node $c$. Then $cf \in P$ and $P$ includes a path $\bar{P}$ from $f$ to a leaf descended from $f$. Since all paths in $\P'$ are disjoint, no path in $\P'$ has top-most node along $\bar{P}$. So $f$ is not blocked in $V$.

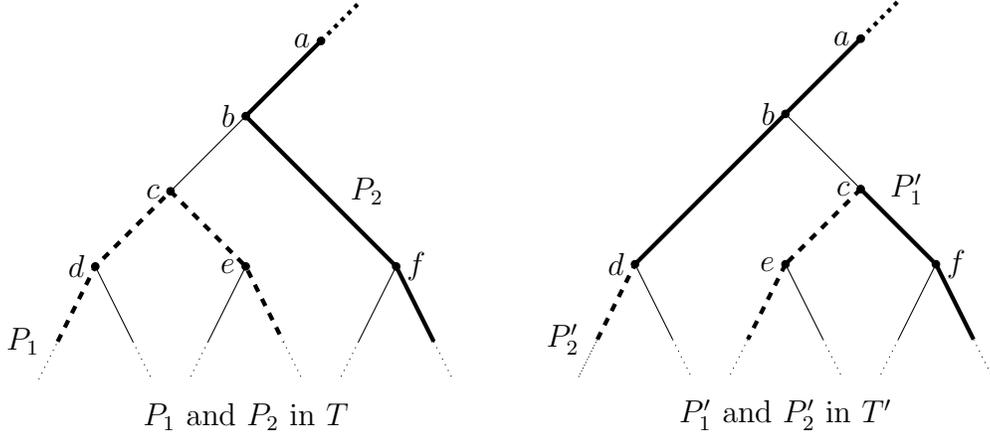
\begin{figure}
\begin{center}
\begin{tikzpicture}
\draw[dotted](1.25,.5)--(1.5,1);
\draw[dotted](4.5,1)--(4.75,.5);
\draw[ultra thick, dashed] (1.5,1)--(2,2)--(3,3)--(4,2)--(4.5,1);
\draw[ultra thick] (5,5)--(4,4)--(6,2)--(6.5,1);
\node[left] at (1.4,1) {$P_1$};
\node[right] at (5.25,3) {$P_2$};
\draw (3,3)--(5,5);
\draw (2,2)--(2.5,1);
\draw (4,2)--(3.5,1);
\draw (4,4)--(6,2);
\draw (6,2)--(5.5,1);
\draw (6,2)--(6.5,1);
\draw[dotted] (2.5,1)--(2.75,.5);
\draw[dotted] (3.5,1)--(3.25,.5);
\draw[dotted] (5.5,1)--(5.25,.5);
\draw[ultra thick, dotted](5,5)--(5.5,5.5);
\draw[dotted] (6.5,1)--(6.75,.5);
\draw[fill] (5,5) circle  [radius =.05];
\node[left] at (5,5) {$a$};
\draw[fill] (4,4) circle  [radius =.05];
\node[left] at (4,4) {$b$};
\draw[fill] (3,3) circle  [radius =.05];
\node[left] at (3,3) {$c$};
\draw[fill] (2,2) circle  [radius =.05];
\node[left] at (2,2) {$d$};
\draw[fill] (4,2) circle  [radius =.05];
\node[left] at (4,2) {$e$};
\draw[fill] (6,2) circle  [radius =.05];
\node[right] at (6,2) {$f$};
\node at (4,0) {$P_1$ and $P_2$ in $T$};
\end{tikzpicture} $\qquad$ \begin{tikzpicture}
\draw[dotted](1.25,.5)--(1.5,1);
\draw[ultra thick, dashed] (1.5,1)--(1.5,1)--(2,2);
\draw[ultra thick] (2,2)--(5,5);
\draw (3,3)--(5,5);
\draw (2,2)--(2.5,1);
\node[left] at (1.4,1) {$P_2'$};
\node[right] at (5.25,3) {$P_1'$};
\draw[ultra thick, dashed] (5,3)--(4,2);
\draw (4,2)--(4.5,1);
\draw [ultra thick, dashed] (4,2)--(3.5,1);
\draw (4,4)--(5,3);
\draw [ultra thick] (5,3)--(6,2);
\draw (6,2)--(5.5,1);
\draw [ultra thick] (6,2)--(6.5,1);
\draw[dotted] (1.5,1)--(1.25,.5);
\draw[dotted] (2.5,1)--(2.75,.5);
\draw[dotted] (3.5,1)--(3.25,.5);
\draw[dotted] (4.5,1)--(4.75,.5);
\draw[dotted] (5.5,1)--(5.25,.5);
\draw[dotted] (6.5,1)--(6.75,.5);
\draw[ultra thick, dotted] (5,5)--(5.5,5.5);
\draw[fill] (5,5) circle  [radius =.05];
\node[left] at (5,5) {$a$};
\draw[fill] (4,4) circle  [radius =.05];
\node[left] at (4,4) {$b$};
\draw[fill] (5,3) circle  [radius =.05];
\node[left] at (5,3) {$c$};
\draw[fill] (2,2) circle  [radius =.05];
\node[left] at (2,2) {$d$};
\draw[fill] (4,2) circle  [radius =.05];
\node[left] at (4,2) {$e$};
\draw[fill] (6,2) circle  [radius =.05];
\node[right] at (6,2) {$f$};
\node at (4,0) {$P_1'$ and $P_2'$ in $T'$};
\end{tikzpicture} 
\end{center}
\caption{Proposition \ref{prop:blocked} (iii): In this case, $\bar{P_1}$ contains all dashed edges descended from $d$. $\hat{P_1}$ contains all dotted edges descended from $e$. $\bar{P}$ contains all thick solid edges descended from $f$. $\hat{P_2}$ contains all thick solid edges above $b$.}
\label{fig:blockediii}
\end{figure}

Suppose that $f$ is not blocked in $V$. Let $P_1 \in \P$ have top-most node $c$. 
Since $f$ is not blocked in $V$, there exists a path $\bar{P}$ from $f$ to a leaf
descended from $f$ such that no node on $\bar{P}$ is in $V$.
Note that this path may be contained in some path $P_2 \in \P$. 
If such $P_2$ exists, it must have top-most node above $b$. So $bf \in P_2$ in this case. Let $\hat{P_2} = P_2 - (\bar{P} \cup \{bf\})$. These paths are illustrated in Figure \ref{fig:blockediii}.

Furthermore, $P_1$ contains a path $\bar{P_1}$ from $d$ to a leaf descended from $d$, and $\hat{P_1}$ from $e$ to a leaf descended from $e$.

Let $P_1'$ be the path in $T'$ with top-most node $c$,
\[
P_1' = \hat{P_1} \cup \bar{P} \cup \{ce, cf\}.
\]

If there exists a $P_2 \in \P$ that contains $f$, let $P_2'$ be the path in $T'$,
\[
P_2' = \bar{P_1} \cup \hat{P_2} \cup \{bd\}.
\]

Then $\P' = (\P - \{P_1,P_2\}) \cup \{P_1',P_2'\}$, or $(\P - \{P_1\}) \cup \{ P_1' \}$ if no such $P_2$ exists, is a path system in $T'$ $[\P'] = [\P]$.  So $\bv$ is maintaining.
\end{proof}

For simplicity, if the $b$-coordinate of $[\P]$ is equal to 1 (ie. $[\P]_b = 1$) and $[\P]$ is nonmaintaining, we say that $[\P]$ is $b$-nonmaintaining, and similarly for node $c$. 

\begin{prop} \label{prop:nonmaintainingbij}
The $b$-nonmaintaining vertices of $R_T$ are in bijection with the $c$-nonmaintaining vertices of $R_{T'}$. Similarly, the $c$-nonmaintaining vertices of $R_T$ are in bijection with the $b$-nonmaintaining vertices of $R_{T'}$
\end{prop}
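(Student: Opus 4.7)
The plan is to define two explicit involutions on top-sets and verify each sends nonmaintaining top-sets of the claimed type in $R_T$ to nonmaintaining top-sets of the claimed type in $R_{T'}$. Concretely, I propose $\phi(V) := (V \setminus \{b\}) \cup \{c\}$ as the candidate bijection from $b$-nonmaintaining vertices of $R_T$ to $c$-nonmaintaining vertices of $R_{T'}$, and $\psi(V) := (V \setminus \{c\}) \cup \{b\}$ as the candidate from $c$-nonmaintaining vertices of $R_T$ to $b$-nonmaintaining vertices of $R_{T'}$. Each is an involution on underlying finite sets, so bijectivity reduces to showing the image lies in the correct target class in both directions.

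The first step is to apply Proposition \ref{prop:blocked} to the reverse NNI from $T'$ to $T$. In $T'$, the positions of $d$ and $f$ relative to the triple $(b,c,e)$ are swapped compared to $T$, so a vertex of $R_{T'}$ is $b$-nonmaintaining iff $b \in V'$ and $f$ is blocked in $V'$, and is $c$-nonmaintaining iff $c \in V'$ and $d$ is blocked in $V'$. Because the $d$- and $f$-subtrees are preserved by the NNI and neither $b$ nor $c$ lies in either subtree, the predicates ``$d$ is blocked'' and ``$f$ is blocked'' agree on $V$ and on $\phi(V)$ (respectively $V$ and $\psi(V)$). Hence the blocking data transfer cleanly along both proposed bijections, and the claims reduce to showing that $\phi(V)$ and $\psi(V)$ are realizable as top-sets in $T'$.

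For $\phi$, start from a path system $\P$ realizing $V$ in $T$ and let $P_b$ denote the path with top $b$. The crux observation is that the $c$-side of $P_b$ must use $ce$, not $cd$: if $P_b$ used $cd$, then its $d$-subtree portion would be a leaf-reaching path from $d$ avoiding every other element of $V$ (by edge-disjointness of $\P$), contradicting the blocking of $d$. Thus $P_b$ runs from some leaf in the $e$-subtree up through $e, c, b, f$ down to some leaf in the $f$-subtree, and replacing the two edges $bc$ and $bf$ of $P_b$ by the single $T'$-edge $cf$ produces a top-$c$ path $P_c'$ in $T'$ on the same endpoints. The remaining paths transfer unchanged: no other path passes through $b$ (both non-upward edges at $b$ are used by $P_b$), and a parallel count at $c$ shows no other path uses $cd$, so every other path uses only edges of $T \cap T'$ and is edge-disjoint from $P_c'$. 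The construction for $\psi$ is parallel: with $c \in V$ and $f$ blocked, no path of $\P$ can use $bf$ (otherwise its $f$-subtree portion would violate $f$-blockedness), and the top-$c$ path $P_c$ (which uses $cd$ and $ce$) converts into a top-$b$ path $P_b'$ in $T'$ by replacing its edge $cd$ with the two $T'$-edges $bd$ and $bc$. Inverses are handled by the same arguments with $T$ and $T'$ swapped.

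The main obstacle is the edge-disjointness verification for the transferred path systems in $T'$. The combinatorial linchpin is that a top vertex $v$ monopolizes the two non-upward edges at $v$, preventing any other path of $\P$ from passing through $v$ or using the upward edge at $v$; combined with the blocking hypothesis that forces $P_b$ through $ce$ rather than $cd$ (respectively, forbids any path through $bf$), this is precisely what makes the naive single-edge swap between $T$ and $T'$ collision-free.
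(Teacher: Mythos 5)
Your proof is correct and takes essentially the same route as the paper: apply Proposition \ref{prop:blocked} (noting that the roles of $d$ and $f$ swap when the proposition is read on $T'$), observe that a $b$-nonmaintaining top-set forces the top-$b$ path to descend through $ce$ rather than $cd$, and then perform the local edge swap $\{bc,bf\}\leftrightarrow\{cf\}$ (and $\{cd\}\leftrightarrow\{bd,bc\}$ for the other direction) to transfer the path system. You supply more of the bookkeeping that the paper leaves implicit — in particular the verification that no other path of $\P$ touches any edge incident to $b$ or $c$, so that the rest of the path system carries over unchanged and remains edge-disjoint, and the observation that the blocking predicates for $d$ and $f$ depend only on the unchanged $d$- and $f$-subtrees. (One minor wording quibble: $\phi$ and $\psi$ are mutual inverses on the relevant families of sets rather than literal involutions, but your subsequent use of them is correct.)
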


\begin{proof}
Let $[\P]$ be a $b$-nonmaintaining vertex of $R_T$. Then by Proposition \ref{prop:blocked}, $d$ is blocked in the top-set $V$ of $[\P]$. So the path $P \in \P$ with top-most node $b$ passes through the $e$-subtree of $T$. Let $P'$ be the path in $T'$ given by
\[
P' = (P - \{bc,bf\}) \cup \{ cf \}.
\]
Then $\P'=(\P - \{P\}) \cup P')$ is a path system in $T'$ that matches $[\P]$ on all coordinates other than the $b$- and $c$-coordinates, and that has $b$-coordinate equal to 0 and $c$-coordinate equal to 1. Since $d$ is blocked in $\P'$, $[\P']$ is $c$-nonmaintaining in $T'$. (Note that the node labels do not match those of Proposition \ref{prop:blocked} since we are applying the result to the tree obtained after the rotation has been performed.) Performing the reverse operation on a $c$-nonmaintaining vertex $[\P']$ of $T'$ shows that this is a bijection.
\end{proof}

\begin{definition} If $[\P]$ is nonmaintaining, let $\P'$ be the path system described in the proof of Proposition \ref{prop:nonmaintainingbij}, so that $[\P']_b = [\P]_c$, $[\P']_c = [\P]_b$, and $[\P']$ matches $[\P]$ for all other nodes of $T$. Proposition \ref{prop:nonmaintainingbij} allows us to define the following involution between the vertices of $R_T$ and $R_{T'}$:
\begin{align*}
\phi^{T,T'}: \ &\text{vert}(R_T) \rightarrow  \text{vert}(R_{T'}) \\
& \quad [\P] \quad \mapsto \quad \begin{cases}
[\P] , & \text{ if $[\P]$ is maintaining}\\
[\P'], & \text{ if $[\P]$ is nonmaintaining}.
\end{cases}
\end{align*}
\end{definition}

We now turn our attention to the integer lattice points in the $m$\textsuperscript{th} dilates of $R_T$ and $R_{T'}$ for $m \in \Z_{+}$. Let $\bv \in \Z^{n-1} \cap mR_T$. Recall that by Corollary \ref{cor:triangulation}, $R_T$ is normal. So, we may write $\bv = [\P_1] + \dots + [\P_m]$ for some $[\P_1], \dots, [\P_m] \in \text{vert}(R_T)$. We call $[\P_1] + \dots + [\P_m]$ a \textit{representation} of $\bv$. Such a representation is \textit{minimal} if it uses the smallest number of nonmaintaining vertices over all representations of $\bv$.

For each vertex $[\P_i]$ of $R_T$, let $V_i$ denote the top-set associated to $[\P_i]$.

\begin{definition}
A representation $[\P_1] + \dots + [\P_m] = \bv \in mR_T \cap \Z^{n-1}$ is \emph{$d$-compressed} if
\begin{itemize}[label=\raisebox{0.25ex}{\tiny$\bullet$}]
\item all of $[\P_1], \dots, [\P_m]$ with $b$-coordinate equal to 1 are maintaining, or
\item for all $[\P_i]$ with $b,c \notin V_i$, $d$ is blocked in $V_i$.
\end{itemize}
Similarly, this representation is \emph{$f$-compressed} if 
\begin{itemize}[label=\raisebox{0.25ex}{\tiny$\bullet$}]
\item all of $[\P_1] , \dots ,[\P_m]$ with $c$-coordinate equal to 1 are maintaining, or
\item for all $[\P_i]$ with $b,c \notin V_i$, $f$ is blocked in $V_i$.
\end{itemize}
If $[\P_1] + \dots + [\P_m]$ is both $d$-compressed and $f$-compressed, then we say that the representation is \emph{$df$-compressed}.
\end{definition}

Consider the map $\phi^{T,T'}_m: (mR_T \cap \Z^{n-1}) \rightarrow (mR_{T'} \cap \Z^{n-1})$ defined by
\[
\phi^{T,T'}_m (\bv) = \sum_{i=1}^m \phi^{T,T'}([\P_i])
\]
where $\sum_{i=1}^m [\P_i]$ is a minimal representation of $\bv$. Both the well-definedness of this map, as well as the fact that it is a bijection follow Lemmas \ref{lem:compressed} and \ref{lem:minreps} below.

\begin{lemma}\label{lem:compressed}
If $[\P_1] + \dots + [\P_m]$ is a minimal representation of $\bv \in mR_T \cap \Z^{n-1}$, then $[\P_1] + \dots + [\P_m]$ is $df$-compressed.
\end{lemma}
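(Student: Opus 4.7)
I argue the contrapositive: a non-$df$-compressed representation is not minimal. I handle the case of $d$-compression failure in detail; the $f$-compression failure case is handled by an entirely analogous argument using Proposition~\ref{prop:blocked}(iii) in place of (ii). So suppose the representation is not $d$-compressed: there exist indices $i, j$ with $b, c \notin V_i$, $d$ not blocked in $V_i$, and $b \in V_j$ with $[\P_j]$ nonmaintaining, whence $d$ is blocked in $V_j$ by Proposition~\ref{prop:blocked}(ii). The goal is to construct path systems $\P_i', \P_j'$ with $[\P_i'] + [\P_j'] = [\P_i] + [\P_j]$ such that $[\P_j']$ is maintaining, while $[\P_i']$ retains $b, c \notin V_{i'}$ and so stays maintaining via Proposition~\ref{prop:blocked}(i); this strictly decreases the number of nonmaintaining summands, contradicting minimality.

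Let $P_j \in \P_j$ be the unique top-$b$ path. The edge-disjointness argument from Proposition~\ref{prop:blocked}(ii) forces $P_j$ to traverse $ce$ rather than $cd$: otherwise its below-$d$ portion would be a leaf-path from $d$ avoiding $V_j$, contradicting $d$ blocked. Decompose $P_j = Q_e^{-1} \cdot ec \cdot cb \cdot bf \cdot Q_f$, where $Q_e$ and $Q_f$ are the sub-paths in the $e$- and $f$-subtrees. Using $d$ not blocked in $V_i$, pick a leaf-path $Q$ from $d$ in the $d$-subtree with $V_i \cap Q = \emptyset$; since $d$ is blocked in $V_j$ the intersection $V_j \cap Q$ is nonempty. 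Set $\tilde P_j := Q^{-1} \cdot dc \cdot cb \cdot bf \cdot Q_f$, a top-$b$ path whose $d$-side is $Q$; this is the key ingredient for making $[\P_j']$ maintaining.

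The construction of $\P_i'$ and $\P_j'$ then proceeds by a case analysis on how $\P_i$ meets the vertex $c$: (a) no $\P_i$-path uses $c$; (b) some $R \in \P_i$ uses $\{bc, cd\}$; or (c) some $R \in \P_i$ uses $\{bc, ce\}$. In cases (b) and (c), $R$ is rerouted to use the complementary child edge of $c$, and $Q_e$ is spliced into its below-$c$ portion in exchange for the former below-$c$ fragment of $R$, which combines with $\tilde P_j$. In case (a), one performs a color swap of $\P_i$-edges lying on $Q$ (necessarily belonging to paths whose tops are strictly below $d$, by the hypothesis $V_i \cap Q = \emptyset$) against the edges of $Q_e$ in the $e$-subtree; the swap is well-defined and does not create or destroy tops because all affected tops lie strictly below $c$. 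A path in $\P_j$ whose top lies on $V_j \cap Q$ may additionally need to be moved out of $\P_j$ and absorbed into $\P_i'$ to make the edge-accounting work while preserving the multiset of tops.

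The verifications are: $\P_i', \P_j'$ are edge-disjoint path systems (by the explicit case-by-case construction); $[\P_i'] + [\P_j'] = [\P_i] + [\P_j]$, because each case is designed so that the tops of the affected paths rebalance; $[\P_j']$ is maintaining by Proposition~\ref{prop:blocked}(ii), since $\tilde P_j$ exhibits a leaf-path from $d$ that avoids $V_{j'}$; and $[\P_i']$ is maintaining by Proposition~\ref{prop:blocked}(i) since $b, c \notin V_{i'}$. The main technical obstacle is the precise implementation of the case (a) surgery together with the possibly-required $V_j \cap Q$ swap: one must track exactly which edges move between $\P_i$ and $\P_j$ in the $d$- and $e$-subtrees and verify coordinate-by-coordinate that the induced top-vector shifts in $\P_i'$ and $\P_j'$ cancel.
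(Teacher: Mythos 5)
Your high-level strategy matches the paper's — prove the contrapositive, locate a $b$-nonmaintaining summand and a summand with $b,c$ absent and $d$ unblocked, and shuffle tops between the two to reduce the count of nonmaintaining vertices — and you correctly observe that the top-$b$ path must exit $c$ via the $e$-side because $d$ is blocked. But the construction you sketch is strictly harder to verify than what the paper does, and you flag without resolving exactly the step where your approach breaks down.

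The paper's move for the $d$-compression case is a \emph{whole-subtree swap}: it sets $\bar{V_1} = (V_1 \setminus (V_1^d \cup V_1^e)) \cup V_2^d \cup V_2^e$ and $\bar{V_2} = (V_2 \setminus (V_2^d \cup V_2^e)) \cup V_1^d \cup V_1^e$, transplanting the \emph{entire} contents of both the $d$- and the $e$-subtree between the two collections, and then re-plumbs only the two crossing paths: the old top-$b$ path becomes $P' = \hat P \cup \{bc, cd\} \cup \bar P$ with $\bar P$ a $d$-to-leaf path avoiding $V_2$, and any $\P_2$-path through $c$ becomes $Q' = \hat Q \cup \{ce\} \cup \tilde P$ using the $e$-side fragment of the old top-$b$ path. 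Because everything below $d$ and below $e$ is moved wholesale, disjointness of the resulting systems and exact cancellation of the top vectors are immediate, and the leftover $P'$ is $b$-maintaining because $d$ is no longer blocked in $\bar V_1$. The $f$-compression case is even simpler in the paper — it swaps the whole $c$-subtree after noting that unblockedness of $f$ lets one assume no $\P_2$-path uses $bc$.

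Your proposal instead tries a \emph{partial} swap, exchanging only the edges lying on the particular path $Q$ (below $d$) and on $Q_e$ (below $e$). This is the source of the trouble you flag as "the main technical obstacle": once you move only some of $\P_i$'s edges in the $d$-subtree, you must re-establish both edge-disjointness of $\P_i'$ and $\P_j'$ and coordinate-by-coordinate cancellation of the top-vector changes, and your description of the "color swap" in case (a), together with the ad hoc "absorb a $V_j \cap Q$ path into $\P_i'$," leaves both unverified. In cases (b)/(c) your description of splicing $Q_e$ into the rerouted path also mixes up which subtree the spliced fragment lives in. These are genuine gaps, not cosmetic ones. The fix is to replace the surgical swap with the whole-subtree swap as in the paper; then case (a) requires no surgery at all (every $\P_i$-path meeting the $c$-subtree is already contained in the $d$- or $e$-subtree and just moves over intact), and the only re-plumbing needed is the two explicit paths $P'$ and $Q'$ described above.
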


In the proofs of Lemmas \ref{lem:compressed} and \ref{lem:minreps}, we use the following notation. For all top-sets $S$ and all nodes $x$ of $T$, denote by $S^x$ the intersection of $S$ with the set of internal nodes of the $x$-subtree. For all $\bw \in \R^{n-1}$, denote by $\bw^x$ the restriction of $\bw$ to the coordinates corresponding to nodes in the $x$-subtree. For all path systems $\P$ in $T$, denote by $\P^x$ the set of all paths in $\P$ that are contained in the $x$-subtree. 

\begin{proof}[Proof of Lemma \ref{lem:compressed}]
 We prove the contrapositive. Suppose that $[\P_1] + \dots + [\P_m]$ is not $df$-compressed. Then this representation is either not $d$-compressed or not $f$-compressed. We show that $[\P_1] + \dots + [\P_m]$ is not minimal in both cases.

If $[\P_1] + \dots + [\P_m]$ is not $d$-compressed, then it must be the case that both conditions in the definition of a $d$-compressed representation fail.  Since the first condition fails, there exists a $[\P_i]$ with $b$-coordinate equal to 1 which is nonmaintaining. Without loss of generality, we may assume that $[\P_1]$ is this $b$-nonmaintaining vertex. Since the second condition fails, there exists a $[\P_i]$ with $b,c \not\in V_i$ and where $d$ is not blocked in $V_i$. Without loss of generality, we may also assume that $[\P_2]$ is this vertex with $b, c \not\in V_2$ but where $d$ is not blocked in $[\P_2]$. We claim that $\bar{V_1} = (V_1 - (V_1^d \cup V_1^e)) \cup V_2^d \cup V_2^e$ and $\bar{V_2} = (V_2 - (V_2^d \cup V_2^e)) \cup V_1^d \cup V_1^e$ are valid top-sets in $T$ with associated path systems $\bar{\P_1}$ and $\bar{\P_2}$ respectively. We further claim that $[\bar{\P_1}]$ and $[\bar{\P_2}]$ are maintaining. The operation described in this proof is illustrated for an example tree $T$ and top-sets $V_1$ and $V_2$ in Figure \ref{fig:dcompression}.

\begin{figure}
\begin{tikzpicture}[scale=.45]
\draw(0,0)--(8,8)--(16,0);
\draw(1,1)--(2,0);
\draw (2.5,2.5)--(5,0);
\draw (4,1)--(3,0);
\draw (4,4)--(8,0);
\draw (7,1)--(6,0);
\draw (7,7)--(14,0);
\draw (11.5,2.5) -- (9,0);
\draw (10,1)--(11,0);
\draw (13,1)--(12,0);
\draw[fill] (8,8) circle [radius = .1];
\node[left] at (8,8) {$a$};
\draw[fill] (7,7) circle [radius = .1];
\node[left] at (7,7) {$b$};
\draw[fill] (4,4) circle [radius = .1];
\node[left] at (4,4) {$c$};
\draw[fill] (2.5,2.5) circle [radius = .1];
\node[left] at (2.5,2.5) {$d$};
\draw[fill] (7,1) circle [radius = .1];
\node[left] at (7,1) {$e$};
\draw[fill] (11.5,2.5) circle [radius = .1];
\node[right] at (11.7,2.5) {$f$};
\draw[fill] (1,1) circle [radius = .1];
\node[left] at (1,1) {$g$};
\draw[fill] (4,1) circle [radius = .1];
\node[right] at (4,1) {$h$};
\draw[fill] (10,1) circle [radius = .1];
\node[left] at (10,1) {$i$};
\draw[fill] (13,1) circle [radius = .1];
\node[right] at (13.2,1) {$j$};
\draw[ultra thick] (0,0)--(1,1)--(2,0);
\draw[ultra thick] (3,0)--(4,1)--(5,0);
\draw[ultra thick, dashed] (8,0)--(4,4)--(7,7)--(14,0);
\draw[ultra thick] (9,0)--(10,1)--(11,0);
\node[below, align=center] at (8,0) {A $b$-nonmaintaining path system $\P_1$  \\ realizing $V_1 = \{b,g,h,i\}$ };
\end{tikzpicture}$\qquad$ \begin{tikzpicture}[scale=.45]
\draw(0,0)--(8,8)--(16,0);
\draw(1,1)--(2,0);
\draw (2.5,2.5)--(5,0);
\draw (4,1)--(3,0);
\draw (4,4)--(8,0);
\draw (7,1)--(6,0);
\draw (7,7)--(14,0);
\draw (11.5,2.5) -- (9,0);
\draw (10,1)--(11,0);
\draw (13,1)--(12,0);
\draw[fill] (8,8) circle [radius = .1];
\node[left] at (8,8) {$a$};
\draw[fill] (7,7) circle [radius = .1];
\node[left] at (7,7) {$b$};
\draw[fill] (4,4) circle [radius = .1];
\node[left] at (4,4) {$c$};
\draw[fill] (2.5,2.5) circle [radius = .1];
\node[left] at (2.5,2.5) {$d$};
\draw[fill] (7,1) circle [radius = .1];
\node[left] at (7,1) {$e$};
\draw[fill] (11.5,2.5) circle [radius = .1];
\node[right] at (11.7,2.5) {$f$};
\draw[fill] (1,1) circle [radius = .1];
\node[left] at (1,1) {$g$};
\draw[fill] (4,1) circle [radius = .1];
\node[right] at (4,1) {$h$};
\draw[fill] (10,1) circle [radius = .1];
\node[left] at (10,1) {$i$};
\draw[fill] (13,1) circle [radius = .1];
\node[right] at (13.2,1) {$j$};
\draw[ultra thick] (0,0)--(2.5,2.5);
\draw[ultra thick] (2.5,2.5) -- (8,8)--(16,0);
\draw [ultra thick] (6,0)--(7,1)--(8,0);
\draw[ultra thick] (12,0)--(13,1)--(14,0);
\node[below, align=center] at (8,0) {A path system $\P_2$ realizing \\ $V_2 = \{a,e,j\}$  with $d$ not blocked in $V_2$};
\end{tikzpicture}

\begin{tikzpicture}[scale=.45]
\draw(0,0)--(8,8)--(16,0);
\draw(1,1)--(2,0);
\draw (2.5,2.5)--(5,0);
\draw (4,1)--(3,0);
\draw (4,4)--(8,0);
\draw (7,1)--(6,0);
\draw (7,7)--(14,0);
\draw (11.5,2.5) -- (9,0);
\draw (10,1)--(11,0);
\draw (13,1)--(12,0);
\draw[fill] (8,8) circle [radius = .1];
\node[left] at (8,8) {$a$};
\draw[fill] (7,7) circle [radius = .1];
\node[left] at (7,7) {$b$};
\draw[fill] (4,4) circle [radius = .1];
\node[left] at (4,4) {$c$};
\draw[fill] (2.5,2.5) circle [radius = .1];
\node[left] at (2.5,2.5) {$d$};
\draw[fill] (7,1) circle [radius = .1];
\node[left] at (7,1) {$e$};
\draw[fill] (11.5,2.5) circle [radius = .1];
\node[right] at (11.7,2.5) {$f$};
\draw[fill] (1,1) circle [radius = .1];
\node[left] at (1,1) {$g$};
\draw[fill] (4,1) circle [radius = .1];
\node[right] at (4,1) {$h$};
\draw[fill] (10,1) circle [radius = .1];
\node[left] at (10,1) {$i$};
\draw[fill] (13,1) circle [radius = .1];
\node[right] at (13.2,1) {$j$};
\draw[ultra thick] (0,0)--(2.5,2.5);
\draw[ultra thick, dashed] (2.5,2.5)--(7,7)--(14,0);
\draw [ultra thick] (6,0)--(7,1)--(8,0);
\draw[ultra thick] (9,0)--(10,1)--(11,0);
\node[below, align=center] at (8,0) {The $b$-maintaining path system $\bar{\P_1}$  \\ realizing $\bar{V_1} = \{b,e,i\}$  };
\end{tikzpicture}$\qquad$\begin{tikzpicture}[scale=.45]
\draw(0,0)--(8,8)--(16,0);
\draw(1,1)--(2,0);
\draw (2.5,2.5)--(5,0);
\draw (4,1)--(3,0);
\draw (4,4)--(8,0);
\draw (7,1)--(6,0);
\draw (7,7)--(14,0);
\draw (11.5,2.5) -- (9,0);
\draw (10,1)--(11,0);
\draw (13,1)--(12,0);
\draw[fill] (8,8) circle [radius = .1];
\node[left] at (8,8) {$a$};
\draw[fill] (7,7) circle [radius = .1];
\node[left] at (7,7) {$b$};
\draw[fill] (4,4) circle [radius = .1];
\node[left] at (4,4) {$c$};
\draw[fill] (2.5,2.5) circle [radius = .1];
\node[left] at (2.5,2.5) {$d$};
\draw[fill] (7,1) circle [radius = .1];
\node[left] at (7,1) {$e$};
\draw[fill] (11.5,2.5) circle [radius = .1];
\node[right] at (11.7,2.5) {$f$};
\draw[fill] (1,1) circle [radius = .1];
\node[left] at (1,1) {$g$};
\draw[fill] (4,1) circle [radius = .1];
\node[right] at (4,1) {$h$};
\draw[fill] (10,1) circle [radius = .1];
\node[left] at (10,1) {$i$};
\draw[fill] (13,1) circle [radius = .1];
\node[right] at (13.2,1) {$j$};
\draw[ultra thick, dashed] (4,4)--(8,0);
\draw[ultra thick] (4,4)--(8,8)--(16,0);
\draw[ultra thick] (0,0)--(1,1)--(2,0);
\draw[ultra thick] (3,0)--(4,1)--(5,0);
\draw[ultra thick] (12,0)--(13,1)--(14,0);
\node[below, align=center] at (8,0) {The path system $\bar{\P_2}$ realizing \\ $\bar{V_2} = \{a,g,h,j\}$ };
\end{tikzpicture}
\caption{Proof of Lemma \ref{lem:compressed}, The first row of trees contain path systems -- one whose top-set is $b$-nonmaintaining, and one without $b$ or $c$ in its top-set, but with $d$ not blocked in its top-set. The second row of trees are the path systems obtained by performing the operation in the proof of Lemma \ref{lem:compressed}. Note that the representation given by the path systems in the second row is $df$-compressed.}
\label{fig:dcompression}
\end{figure}

Let $P \in \P_1$ with top-most node $b$. Let $\hat{P}$ be the path from $b$ to a leaf below $f$ contained in $P$. Let $\bar{P}$ be a path from $d$ to a leaf descended from $d$ that does not contain any nodes in $V_2$; this exists since $d$ is not blocked in $V_2$. Let $P' = \hat{P} \cup \{bc, cd \} \cup \bar{P}$. Then 
\[
\bar{\P_1} = (\P_1 - (\{P\} \cup \P_1^d \cup \P_1^e))\cup\{P'\} \cup \P_2^d \cup \P_2^e
\]
 realizes $\bar{V_1}$. Also, $d$ is not blocked in $\bar{V_1}$, so $[\bar{\P_1}]$ is maintaining.
 
 If there is no path in $\P_2$ that contains edges $cd$ or $ce$, then it is clear that 
 \[
\bar{\P_2} =  (\P_2 - (\P_2^d \cup \P_2^e)) \cup \P_1^d \cup \P_1^e
\]
realizes $\bar{V_2}$. Otherwise, suppose that $Q \in \P_2$ is a path that contains $cd$ or $ce$. Let $\hat{Q}$ be the path contained in $Q$ without the edges in the $c$-subtree. Let $P$ be the path in $\P_1$ with top-most node $b$, and let $\tilde{P}$ be the path contained in $P$ from $e$ to a leaf descended from $e$; this exists since $d$ is blocked in $V_1$. Let $Q' = \hat{Q} \cup \{ce\} \cup \tilde{P}$. Then
 \[
 \bar{\P_2} = (\P_2 - (\{Q\} \cup \P_2^d \cup \P_2^e))\cup\{Q'\} \cup \P_1^d \cup \P_1^e
\]
realizes $\bar{V_2}$, as needed. Since $b$ and $c \not\in \bar{V_2}$, $[\bar{\P_2}]$ is maintaining. 

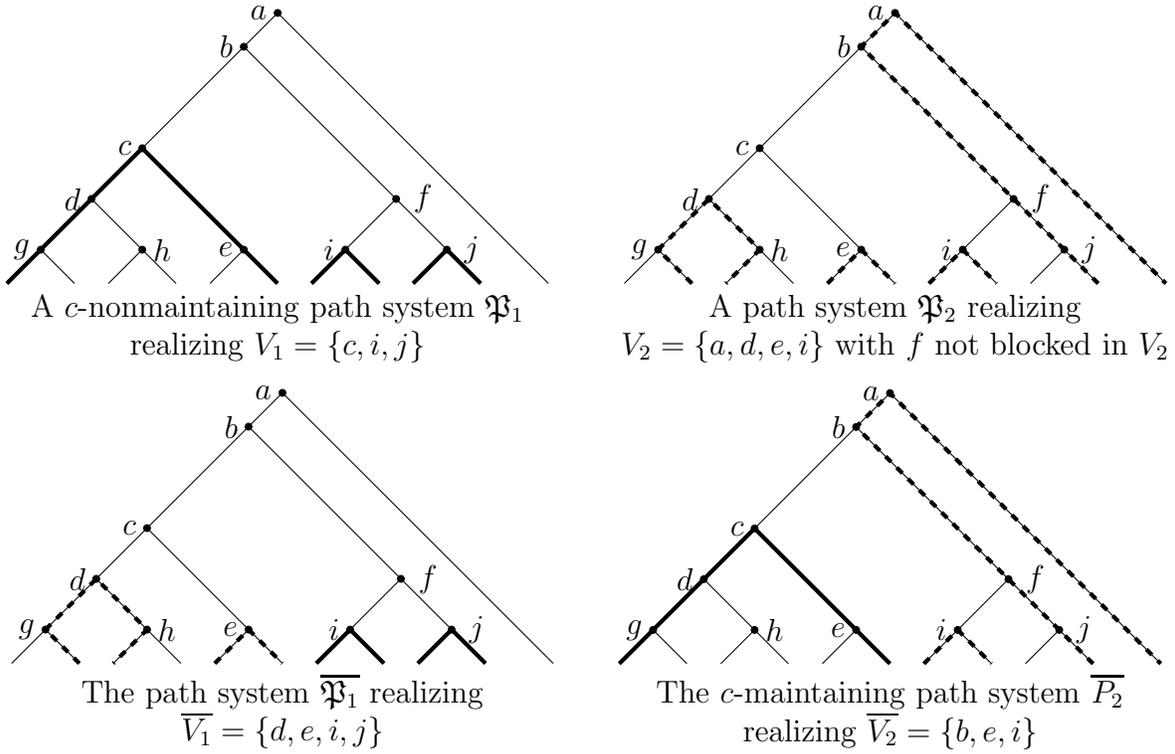
\begin{figure}

\begin{center}
\begin{tikzpicture}[scale=.45]
\draw(0,0)--(8,8)--(16,0);
\draw(1,1)--(2,0);
\draw (2.5,2.5)--(5,0);
\draw (4,1)--(3,0);
\draw (4,4)--(8,0);
\draw (7,1)--(6,0);
\draw (7,7)--(14,0);
\draw (11.5,2.5) -- (9,0);
\draw (10,1)--(11,0);
\draw (13,1)--(12,0);
\draw[fill] (8,8) circle [radius = .1];
\node[left] at (8,8) {$a$};
\draw[fill] (7,7) circle [radius = .1];
\node[left] at (7,7) {$b$};
\draw[fill] (4,4) circle [radius = .1];
\node[left] at (4,4) {$c$};
\draw[fill] (2.5,2.5) circle [radius = .1];
\node[left] at (2.5,2.5) {$d$};
\draw[fill] (7,1) circle [radius = .1];
\node[left] at (7,1) {$e$};
\draw[fill] (11.5,2.5) circle [radius = .1];
\node[right] at (11.7,2.5) {$f$};
\draw[fill] (1,1) circle [radius = .1];
\node[left] at (1,1) {$g$};
\draw[fill] (4,1) circle [radius = .1];
\node[right] at (4,1) {$h$};
\draw[fill] (10,1) circle [radius = .1];
\node[left] at (10,1) {$i$};
\draw[fill] (13,1) circle [radius = .1];
\node[right] at (13.2,1) {$j$};
\draw [ultra thick] (0,0)--(4,4)--(8,0);
\draw [ultra thick] (9,0)--(10,1)--(11,0);
\draw [ultra thick] (12,0)--(13,1)--(14,0);
\node[below, align=center] at (8,0) {A $c$-nonmaintaining path system $\P_1$  
\\ realizing $V_1 = \{c,i,j\}$ };
\end{tikzpicture}$\qquad$\begin{tikzpicture}[scale=.45]
\draw(0,0)--(8,8)--(16,0);
\draw(1,1)--(2,0);
\draw (2.5,2.5)--(5,0);
\draw (4,1)--(3,0);
\draw (4,4)--(8,0);
\draw (7,1)--(6,0);
\draw (7,7)--(14,0);
\draw (11.5,2.5) -- (9,0);
\draw (10,1)--(11,0);
\draw (13,1)--(12,0);
\draw[fill] (8,8) circle [radius = .1];
\node[left] at (8,8) {$a$};
\draw[fill] (7,7) circle [radius = .1];
\node[left] at (7,7) {$b$};
\draw[fill] (4,4) circle [radius = .1];
\node[left] at (4,4) {$c$};
\draw[fill] (2.5,2.5) circle [radius = .1];
\node[left] at (2.5,2.5) {$d$};
\draw[fill] (7,1) circle [radius = .1];
\node[left] at (7,1) {$e$};
\draw[fill] (11.5,2.5) circle [radius = .1];
\node[right] at (11.7,2.5) {$f$};
\draw[fill] (1,1) circle [radius = .1];
\node[left] at (1,1) {$g$};
\draw[fill] (4,1) circle [radius = .1];
\node[right] at (4,1) {$h$};
\draw[fill] (10,1) circle [radius = .1];
\node[left] at (10,1) {$i$};
\draw[fill] (13,1) circle [radius = .1];
\node[right] at (13.2,1) {$j$};
\draw[ultra thick, dashed] (14,0)--(7,7)--(8,8)--(16,0);
\draw[ultra thick, dashed] (2,0)--(1,1)--(2.5,2.5)--(4,1)--(3,0);
\draw[ultra thick, dashed] (6,0)--(7,1)--(8,0);
\draw[ultra thick, dashed] (9,0)--(10,1)--(11,0);
\node[below, align=center] at (8,0) {A path system $\P_2$ realizing \\
$V_2 = \{a,d,e,i\}$  with $f$ not blocked in $V_2$};
\end{tikzpicture}

\begin{tikzpicture}[scale=.45]
\draw(0,0)--(8,8)--(16,0);
\draw(1,1)--(2,0);
\draw (2.5,2.5)--(5,0);
\draw (4,1)--(3,0);
\draw (4,4)--(8,0);
\draw (7,1)--(6,0);
\draw (7,7)--(14,0);
\draw (11.5,2.5) -- (9,0);
\draw (10,1)--(11,0);
\draw (13,1)--(12,0);
\draw[fill] (8,8) circle [radius = .1];
\node[left] at (8,8) {$a$};
\draw[fill] (7,7) circle [radius = .1];
\node[left] at (7,7) {$b$};
\draw[fill] (4,4) circle [radius = .1];
\node[left] at (4,4) {$c$};
\draw[fill] (2.5,2.5) circle [radius = .1];
\node[left] at (2.5,2.5) {$d$};
\draw[fill] (7,1) circle [radius = .1];
\node[left] at (7,1) {$e$};
\draw[fill] (11.5,2.5) circle [radius = .1];
\node[right] at (11.7,2.5) {$f$};
\draw[fill] (1,1) circle [radius = .1];
\node[left] at (1,1) {$g$};
\draw[fill] (4,1) circle [radius = .1];
\node[right] at (4,1) {$h$};
\draw[fill] (10,1) circle [radius = .1];
\node[left] at (10,1) {$i$};
\draw[fill] (13,1) circle [radius = .1];
\node[right] at (13.2,1) {$j$};
\draw[ultra thick, dashed] (2,0)--(1,1)--(2.5,2.5)--(4,1)--(3,0);
\draw[ultra thick, dashed] (6,0)--(7,1)--(8,0);
\draw [ultra thick] (9,0)--(10,1)--(11,0);
\draw [ultra thick] (12,0)--(13,1)--(14,0);
\node[below, align=center] at (8,0) {The path system $\bar{\P_1}$ realizing \\$\bar{V_1} = \{d,e,i,j\}$};
\end{tikzpicture}$\qquad$\begin{tikzpicture}[scale=.45]
\draw(0,0)--(8,8)--(16,0);
\draw(1,1)--(2,0);
\draw (2.5,2.5)--(5,0);
\draw (4,1)--(3,0);
\draw (4,4)--(8,0);
\draw (7,1)--(6,0);
\draw (7,7)--(14,0);
\draw (11.5,2.5) -- (9,0);
\draw (10,1)--(11,0);
\draw (13,1)--(12,0);
\draw[fill] (8,8) circle [radius = .1];
\node[left] at (8,8) {$a$};
\draw[fill] (7,7) circle [radius = .1];
\node[left] at (7,7) {$b$};
\draw[fill] (4,4) circle [radius = .1];
\node[left] at (4,4) {$c$};
\draw[fill] (2.5,2.5) circle [radius = .1];
\node[left] at (2.5,2.5) {$d$};
\draw[fill] (7,1) circle [radius = .1];
\node[left] at (7,1) {$e$};
\draw[fill] (11.5,2.5) circle [radius = .1];
\node[right] at (11.7,2.5) {$f$};
\draw[fill] (1,1) circle [radius = .1];
\node[left] at (1,1) {$g$};
\draw[fill] (4,1) circle [radius = .1];
\node[right] at (4,1) {$h$};
\draw[fill] (10,1) circle [radius = .1];
\node[left] at (10,1) {$i$};
\draw[fill] (13,1) circle [radius = .1];
\node[right] at (13.2,1) {$j$};
\draw[ultra thick, dashed] (14,0)--(7,7)--(8,8)--(16,0);
\draw[ultra thick, dashed] (9,0)--(10,1)--(11,0);
\draw [ultra thick] (0,0)--(4,4)--(8,0);
\node[below, align=center] at (8,0) {The $c$-maintaining path system $\bar{P_2}$ 
 \\realizing $\bar{V_2} = \{b,e,i\}$  };
\end{tikzpicture}
\end{center}
\caption{Proof of Lemma \ref{lem:compressed}, The first row of trees contain path systems -- one whose top-set is $c$-nonmaintaining, and one without $b$ or $c$ in its top-set, but with $f$ not blocked in its top-set. The second row of trees are the path systems obtained by performing the operation in the proof of Lemma \ref{lem:compressed}. Note that the representation given by the path systems in the second row is $df$-compressed.}
\label{fig:fcompression}
\end{figure}

This operation preserves the number of times each internal node is a top-most node. So, $\bv = [\bar{\P_1}] + [\bar{\P_2}] + [\P_3] + \dots + [\P_m]$ is a representation of $\bv$ using fewer nonmaintaining vertices, and $[\P_1] + \dots + [\P_m]$ is not minimal.

If $[\P_1] + \dots + [\P_m]$ is not $f$-compressed, we proceed by a similar argument. Without loss of generality, we may assume that $[\P_1]$ is $c$-nonmaintaining and that $f$ is not blocked in $[\P_2]$. Then we claim that $\bar{V_1} = (V_1 - (V_1^c)) \cup V_2^c $ and $\bar{V_2} = (V_2 - (V_2^c)) \cup V_1^c$ are valid top-sets in $T$ with associated path systems $\bar{\P_1}$ and $\bar{\P_2}$ respectively. Furthermore, we claim that $[\bar{\P_1}]$ and $[\bar{\P_2}]$ are maintaining. Figure \ref{fig:fcompression} shows an example of a path system that is not $f$-compressed and the path system obtained from it by performing this operation.

Since $f$ is not blocked in $V_2$, we may assume that for all $P \in \P_2$, $bc \not\in P$. This is because $b \not\in V_2$, and any $P \in \P_2$ with top-most node above $b$ may pass through the $f$-subtree instead of the $c$-subtree since $f$ is not blocked in $V_2$. So, in both $\P_1$ and $\P_2$, all paths that intersect the $c$-subtree are contained entirely within the $c$-subtree. So $\bar{\P_1} = (\P_1 - \P_1^c) \cup \P_2^c$ and $\bar{\P_2} = (\P_2 - \P_2^c) \cup \P_1^c$ are path systems that realize $\bar{V_1}$ and $\bar{V_2}$, respectively. Since $b,c  \not\in \bar{V_1}$, $[\bar{\P_1}]$ is maintaining. Furthermore, since $f$ is not blocked in $V_2$, and since $\bar{V_2^f} = V_2^f$, $f$ is not blocked in $\bar{V_2}$ and $[\bar{\P_2}]$ is maintaining.

This operation preserves the number of times each internal node is used as a top-most node. So $\bv = [\bar{\P_1}] + [\bar{\P_2}] + [\P_3] + \dots + [\P_m]$ is a representation of $[\P]$ using fewer $c$-nonmaintaining vertices. 
\end{proof}

\begin{lemma}\label{lem:minreps}
Let $\bv, \bu \in mR_T \cap \Z^{n-1}$ such that $v_b + v_c = u_b + u_c$ and $v_x = u_x$ for all $x \neq b,c$. Let $\bv = [\P_1] + \dots + [\P_m]$ be a $df$-compressed representation of $\bv$ and let $\bu = [\Q_1] + \dots + [\Q_m]$ be any representation of $\bu$. If the multiset $\{ [\Q_1] , \dots , [\Q_m]\}$ contains fewer $b$-nonmaintaining or $c$-nonmaintaining vertices than the multiset $\{[\P_1] , \dots , [\P_m]\}$, then $[\P_1] + \dots + [\P_m]$ is not a minimal representation of $\bv$.
\end{lemma}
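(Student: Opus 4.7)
The plan is to construct a representation of $\bv$ with strictly fewer nonmaintaining vertices than $[\P_1] + \cdots + [\P_m]$, contradicting minimality. I focus on part (i); part (ii) will be entirely symmetric, interchanging the roles of $b \leftrightarrow c$, $d \leftrightarrow f$, and $d$-compression with $f$-compression.

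The first observation rules out the second alternative of $d$-compression. If every $[\P_i]$ with $[\P_i]_b = 1$ were maintaining, the multiset $\{[\P_i]\}$ would have no $b$-nonmaintaining vertices, contradicting the hypothesis $N_b(\Q) < N_b(\P)$. Hence $\{[\P_i]\}$ satisfies the first alternative of $d$-compression: every $[\P_i]$ with $b, c \notin V_i$ has $d$ blocked in $V_i$. The second observation handles the easy case $v_b = u_b$: then $v_c = u_c$ and $\bv = \bu$, so the given representation $\{[\Q_j]\}$ is already a representation of $\bv$ using strictly fewer nonmaintaining vertices, and we are done. The remaining work assumes $v_b \neq u_b$; without loss of generality $u_b > v_b$, so $u_c < v_c$ and $\bu$ has more $b$-tops but fewer $c$-tops than $\bv$.

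In this remaining case I would use $\{[\Q_j]\}$ to locate, within $\{[\P_i]\}$, a compatible pair on which the $c$-subtree swap of Lemma \ref{lem:compressed} can be performed. Concretely: pick a $b$-nonmaintaining $[\P_i]$ (exists by $N_b(\P) \geq 1$). I would then argue by a counting/matching argument — comparing the restrictions of $\{[\P_i]\}$ and $\{[\Q_j]\}$ to the subtrees rooted at $d$, $e$, $f$ and to the coordinates above $b$, and using that these restricted sums agree — that the strict inequality $N_b(\Q) < N_b(\P)$ forces the existence of a ``witness'' $[\P_k] \in \{[\P_i]\}$ whose $d$-subtree restriction equals that of some $b$-maintaining $[\Q_j]$, so that $d$ is not blocked in $V_k^d$. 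Setting $V_i' = (V_i - V_i^c) \cup V_k^c$ and $V_k' = (V_k - V_k^c) \cup V_i^c$ as in Lemma \ref{lem:compressed} gives valid top-sets with $[\P_i'] + [\P_k'] = [\P_i] + [\P_k]$. Since $d$ is not blocked in $V_i' = V_i \setminus V_i^c$ together with $V_k^c$ (the latter having $d$ not blocked), the modified $[\P_i']$ becomes $b$-maintaining, while case analysis as in Lemma \ref{lem:compressed} shows $[\P_k']$ is maintaining as well. Replacing $[\P_i], [\P_k]$ by $[\P_i'], [\P_k']$ yields a representation of $\bv$ with one fewer nonmaintaining vertex, the desired contradiction.

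The main obstacle will be the witness-finding step in the case $v_b \neq u_b$. Since $\{[\P_i]\}$ is $d$-compressed, no $b$-nonmaintaining reduction is possible using only internal swaps; the strictly smaller count $N_b(\Q)$ must be leveraged carefully. The proof of existence of the matching $[\P_k]$ will rely on an extremal/pigeonhole argument: enumerate the vertices of each representation by their $(b,c)$-pattern and by whether $d$ is blocked in the relevant $c$-subtree, then use the coordinate-sum equalities together with $N_b(\Q) < N_b(\P)$ to force a $[\P_k]$ of the required type. Once this combinatorial witness is in hand, the swap and the resulting decrease in nonmaintaining count follow directly from the mechanics established in Lemma \ref{lem:compressed}.
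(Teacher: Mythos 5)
Your proposed pairwise swap inside $\{[\P_i]\}$ cannot succeed, and this is a genuine gap rather than an omitted detail. To see why, note that the $d$-compression hypothesis (alternative (a), which you correctly establish) forces every $[\P_i]$ with $b,c \notin V_i$ to have $d$ blocked; hence any ``witness'' $[\P_k]$ with $d$ unblocked must have either $b \in V_k$ (so $[\P_k]$ is $b$-maintaining) or $c \in V_k$. In the first case the swap $V_i' = (V_i - V_i^c) \cup V_k^c$, $V_k' = (V_k - V_k^c) \cup V_i^c$ leaves $b$ in both $V_i'$ and $V_k'$, makes $[\P_i']$ $b$-maintaining but makes $[\P_k']$ $b$-nonmaintaining (since $V_k'^c = V_i^c$ has $d$ blocked), so the nonmaintaining count does not decrease. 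In the second case $V_i'$ contains both $b$ (from $V_i - V_i^c$) and $c$ (from $V_k^c$), which is not a valid top-set. Even replacing your $c$-subtree swap by the $d,e$-subtree swap used in the $d$-compression branch of Lemma~\ref{lem:compressed} does not help: with $c \in V_k$, the new $V_k'$ would need a path topped at $c$ descending into a $d$-subtree in which $d$ is now blocked (namely $V_i^d$), which is impossible. Separately, your witness-finding argument is also not rigorous: the equality $\sum_i [\P_i]^d = \sum_j [\Q_j]^d$ together with $N_b(\Q) < N_b(\P)$ does not by itself produce a $[\P_k]$ whose $d$-restriction equals that of a $b$-maintaining $[\Q_j]$.

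The actual proof does not swap $c$-subtree data between two members of the $\P$-representation. Instead, it imports $d$-subtree data from the $\Q$-representation wholesale: after sorting so that $b$-nonmaintaining terms come first (indices $\leq r'$ for $\Q$, $\leq r$ for $\P$, $r' < r$), then the remaining terms with $b$ or $c$ in the top-set (indices up to $s$, the same $s$ for both because $v_b+v_c=u_b+u_c$), one sets $\bar V_i = (V_i - V_i^d) \cup U_i^d$ for every $i$. Because $v_x = u_x$ for $x$ in the $d$-subtree, $\sum_i [\bar\P_i] = \bv$ still. For $r' < i \leq r$, the matched $[\Q_i]$ has $d$ unblocked, so $\bar V_i$ now has $d$ unblocked and $[\bar\P_i]$ becomes $b$-maintaining; for $i \leq r'$ both sides have $d$ blocked; for $i > s$ both $V_i$ and $U_i$ omit $b$ and $c$, and $d$-compression guarantees no $\P_i$-path enters the $d$-subtree from above, so the substitution is again a valid path collection. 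This yields exactly $N_b(\Q) < N_b(\P)$ nonmaintaining vertices in the new representation, giving the contradiction. Your two preliminary observations (ruling out alternative (b), and the $\bv = \bu$ case) are correct, but they are not where the work is; the global, index-matched replacement of $V_i^d$ by $U_i^d$ is the construction you are missing.
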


\begin{proof}
First suppose that $\{ [\Q_1] , \dots , [\Q_m]\}$ contains fewer $b$-nonmaintaining vertices than $\{[\P_1] , \dots , [\P_m]\}$. Then without loss of generality, let $[\P]_1$ be $b$-nonmaintaining. For all $i$, let $V_i$ denote the top-set corresponding to $\P_i$ and let $U_i$ denote the top-set corresponding to $\Q_i$. Figure \ref{fig:minreps1} depicts an example of this case and of the procedure that we describe in the following proof.

Since $[\P_1] + \dots + [\P_m]$ is $d$-compressed, for all $V_i$ with $b,c \notin V_i$, $d$ is blocked in $V_i$. Without loss of generality, let $[\P_1] , \dots , [\P_r]$ and $[\Q_1] , \dots , [\Q_{r'}]$ be the $b$-nonmaintaining vertices where $r' < r$. Let $[\P_{r+1}], \dots, [\P_s]$ and $[\Q_{r'+1}], \dots, [\Q_s]$ be the rest of the vertices with $b$ or $c$ coordinate equal to 1. Note that by assumption, there are the same number of vertices summed in the representations of $\bu$ and $\bv$.

Let $\bar{V_i} = (V_i - V_i^d) \cup U_i^d$ for all $i$. We claim that each of the $\bar{V_i}$ are valid top-sets, and that the collection of all corresponding $[\bar{\P}_i]$ has the same number of $b$-nonmaintaining vertices as the $[\Q_i]$, and the same number of $c$-nonmaintaining vertices as the $[\P_i]$.

First, let $i \leq r'$. Then since $[\P]_i$ and $[\Q]_i$ are both $b$-nonmaintaining, $d$ is blocked in both $V_i$ and $U_i$. So $cd \not\in \P_i, \Q_i$. Therefore, all paths in $\P_i$ and $\Q_i$ that intersect the $d$-subtree are contained entirely within the $d$-subtree. So $\bar{\P_i} = (\P_i - \P_i^d) \cup \Q_i^d$ is a path system that realizes $\bar{V_i}$, as needed.

Next, let $r' < i \leq s$. Then $[\Q]_i$ either is $b$-maintaining or has $c$-coordinate equal to 1. In either case, $d$ is not blocked in $U_i$. So there exists a path $Q$ from $d$ to a leaf descended from $d$ with no node along $Q$ in $U_i$. Let $P \in \P_i$ be the path with either $b$ or $c$ as its top-most node.

If $P$ has $b$ as its top-most node, then let $\hat{P}$ be the path from $b$ to a node below $f$ that is contained in $P$. Let $P' = \hat{P} \cup \{bc, cd\} \cup Q$. Then
\[
\bar{P_i} = (\P_i - (\{ P \} \cup \P_i^d)) \cup \{ P' \} \cup \Q_i^d
\]
realizes $\bar{V_i}$.

If $P$ has $c$ as its top-most node, then let $\hat{P}$ be the path from $c$ to a leaf below $e$ that is contained in $P$. Let $P' - \hat{P} \cup \{cd\} \cup Q$. Then
\[
\bar{P_i} = (\P_i - (\{ P \} \cup \P_i^d)) \cup \{ P' \} \cup \Q_i^d
\]
realizes $ \bar{V_i}$.

Note that in all cases when $r' < i \leq s$, $d$ is not blocked in $\bar{V_i}$. Since $r'<r$, this means that there are fewer $b$-nonmaintaining vertices in $\{[\bar{\P_1}], \dots , [\bar{\P_s}] \}$ than in $\{[\P_1], \dots , [\P_s]\}$. Furthermore, since the paths in the $f$-subtrees remain unchanged, this operation cannot create new $c$-nonmaintaining vertices.

Finally, let $i>s$. Then $b,c \not\in V_i, U_i$. Since $d$ is blocked in every $V_i$, all paths in $\P_i$ that intersect the $d$-subtree are contained entirely in the $d$-subtree. So $\bar{\P} = (\P_i - \P_i^d) \cup \Q_i^d$ is a path system that realizes $\bar{V_i}$.

Since
\[
\sum_{i=1}^m [\P_i]^d = \sum_{i=1}^m [\Q_i]^d,
\]
and since $\bar{[\P_i]}^d = [\Q_i]^d$ for all $i$, $[\bar{\P_1}] + \dots + [\bar{\P_m}]$ is a representation of $\bv$ using fewer nonmaintaining vertices than $[\P_1] + \dots + [\P_m]$. So $[\P_1] + \dots + [\P_m]$ is not minimal. An example of the operation used to obtain $[\bar{\P_1}],\dots ,[\bar{\P_m}]$ is illustrated in Figure \ref{fig:minreps1}.

\begin{sidewaysfigure}

\begin{center}
\begin{tikzpicture}[scale=.45]
\draw(0,0)--(8,8)--(16,0);
\draw(1,1)--(2,0);
\draw (2.5,2.5)--(5,0);
\draw (4,1)--(3,0);
\draw (4,4)--(8,0);
\draw (7,1)--(6,0);
\draw (7,7)--(14,0);
\draw (11.5,2.5) -- (9,0);
\draw (10,1)--(11,0);
\draw (13,1)--(12,0);
\draw[fill] (8,8) circle [radius = .1];
\node[left] at (8,8) {$a$};
\draw[fill] (7,7) circle [radius = .1];
\node[left] at (7,7) {$b$};
\draw[fill] (4,4) circle [radius = .1];
\node[left] at (4,4) {$c$};
\draw[fill] (2.5,2.5) circle [radius = .1];
\node[left] at (2.5,2.5) {$d$};
\draw[fill] (7,1) circle [radius = .1];
\node[left] at (7,1) {$e$};
\draw[fill] (11.5,2.5) circle [radius = .1];
\node[right] at (11.7,2.5) {$f$};
\draw[fill] (1,1) circle [radius = .1];
\node[left] at (1,1) {$g$};
\draw[fill] (4,1) circle [radius = .1];
\node[right] at (4,1) {$h$};
\draw[fill] (10,1) circle [radius = .1];
\node[left] at (10,1) {$i$};
\draw[fill] (13,1) circle [radius = .1];
\node[right] at (13.2,1) {$j$};
\draw[ultra thick] (2,0)--(1,1)--(2.5,2.5)--(4,1)--(3,0);
\draw[ultra thick] (8,0)--(4,4)--(7,7)--(14,0);
\node[below, align=center] at (8,0) {A path system realizing 
$V_1 = \{b,d\}$ \\
where $[\P_1]$ is $b$-nonmaintaining};
\end{tikzpicture}$\qquad$\begin{tikzpicture}[scale=.45]
\draw(0,0)--(8,8)--(16,0);
\draw(1,1)--(2,0);
\draw (2.5,2.5)--(5,0);
\draw (4,1)--(3,0);
\draw (4,4)--(8,0);
\draw (7,1)--(6,0);
\draw (7,7)--(14,0);
\draw (11.5,2.5) -- (9,0);
\draw (10,1)--(11,0);
\draw (13,1)--(12,0);
\draw[fill] (8,8) circle [radius = .1];
\node[left] at (8,8) {$a$};
\draw[fill] (7,7) circle [radius = .1];
\node[left] at (7,7) {$b$};
\draw[fill] (4,4) circle [radius = .1];
\node[left] at (4,4) {$c$};
\draw[fill] (2.5,2.5) circle [radius = .1];
\node[left] at (2.5,2.5) {$d$};
\draw[fill] (7,1) circle [radius = .1];
\node[left] at (7,1) {$e$};
\draw[fill] (11.5,2.5) circle [radius = .1];
\node[right] at (11.7,2.5) {$f$};
\draw[fill] (1,1) circle [radius = .1];
\node[left] at (1,1) {$g$};
\draw[fill] (4,1) circle [radius = .1];
\node[right] at (4,1) {$h$};
\draw[fill] (10,1) circle [radius = .1];
\node[left] at (10,1) {$i$};
\draw[fill] (13,1) circle [radius = .1];
\node[right] at (13.2,1) {$j$};
\draw[ultra thick] (5,0)--(2.5,2.5)--(4,4)--(7,1)--(6,0);
\draw[ultra thick] (9,0)--(10,1)--(11,0);
\draw[ultra thick] (12,0)--(13,1)--(14,0);
\node[below, align=center] at (8,0) {A path system realizing\\
$V_2 = \{c,i,j\}$};
\end{tikzpicture}$\qquad$\begin{tikzpicture}[scale=.45]
\draw(0,0)--(8,8)--(16,0);
\draw(1,1)--(2,0);
\draw (2.5,2.5)--(5,0);
\draw (4,1)--(3,0);
\draw (4,4)--(8,0);
\draw (7,1)--(6,0);
\draw (7,7)--(14,0);
\draw (11.5,2.5) -- (9,0);
\draw (10,1)--(11,0);
\draw (13,1)--(12,0);
\draw[fill] (8,8) circle [radius = .1];
\node[left] at (8,8) {$a$};
\draw[fill] (7,7) circle [radius = .1];
\node[left] at (7,7) {$b$};
\draw[fill] (4,4) circle [radius = .1];
\node[left] at (4,4) {$c$};
\draw[fill] (2.5,2.5) circle [radius = .1];
\node[left] at (2.5,2.5) {$d$};
\draw[fill] (7,1) circle [radius = .1];
\node[left] at (7,1) {$e$};
\draw[fill] (11.5,2.5) circle [radius = .1];
\node[right] at (11.7,2.5) {$f$};
\draw[fill] (1,1) circle [radius = .1];
\node[left] at (1,1) {$g$};
\draw[fill] (4,1) circle [radius = .1];
\node[right] at (4,1) {$h$};
\draw[fill] (10,1) circle [radius = .1];
\node[left] at (10,1) {$i$};
\draw[fill] (13,1) circle [radius = .1];
\node[right] at (13.2,1) {$j$};
\draw[ultra thick] (0,0)--(1,1)--(2,0);
\draw[ultra thick] (3,0)--(4,1)--(5,0);
\draw[ultra thick] (6,0)--(7,1)--(8,0);
\draw[ultra thick] (14,0)--(7,7)--(8,8)--(16,0);
\node[below, align=center] at (8,0) {A path system realizing
$V_3 = \{a,e,g,h\}$ \\
with $d$ blocked in $V_3$};
\end{tikzpicture}

\begin{tikzpicture}[scale=.45]
\draw(0,0)--(8,8)--(16,0);
\draw(1,1)--(2,0);
\draw (2.5,2.5)--(5,0);
\draw (4,1)--(3,0);
\draw (4,4)--(8,0);
\draw (7,1)--(6,0);
\draw (7,7)--(14,0);
\draw (11.5,2.5) -- (9,0);
\draw (10,1)--(11,0);
\draw (13,1)--(12,0);
\draw[fill] (8,8) circle [radius = .1];
\node[left] at (8,8) {$a$};
\draw[fill] (7,7) circle [radius = .1];
\node[left] at (7,7) {$b$};
\draw[fill] (4,4) circle [radius = .1];
\node[left] at (4,4) {$c$};
\draw[fill] (2.5,2.5) circle [radius = .1];
\node[left] at (2.5,2.5) {$d$};
\draw[fill] (7,1) circle [radius = .1];
\node[left] at (7,1) {$e$};
\draw[fill] (11.5,2.5) circle [radius = .1];
\node[right] at (11.7,2.5) {$f$};
\draw[fill] (1,1) circle [radius = .1];
\node[left] at (1,1) {$g$};
\draw[fill] (4,1) circle [radius = .1];
\node[right] at (4,1) {$h$};
\draw[fill] (10,1) circle [radius = .1];
\node[left] at (10,1) {$i$};
\draw[fill] (13,1) circle [radius = .1];
\node[right] at (13.2,1) {$j$};
\draw[ultra thick, dashed] (0,0)--(7,7)--(11.5,2.5)--(9,0);
\draw[ultra thick, dashed] (3,0)--(4,1)--(5,0);
\draw[ultra thick, dashed] (6,0)--(7,1)--(8,0);
\draw[ultra thick, dashed] (12,0)--(13,1)--(14,0);
\node[below, align=center] at (8,0) {A path system realizing 
$U_1 = \{b,h,e,j\}$  \\
where $[\Q_1]$ is $b$-maintaining};
\end{tikzpicture}$\qquad$\begin{tikzpicture}[scale=.45]
\draw(0,0)--(8,8)--(16,0);
\draw(1,1)--(2,0);
\draw (2.5,2.5)--(5,0);
\draw (4,1)--(3,0);
\draw (4,4)--(8,0);
\draw (7,1)--(6,0);
\draw (7,7)--(14,0);
\draw (11.5,2.5) -- (9,0);
\draw (10,1)--(11,0);
\draw (13,1)--(12,0);
\draw[fill] (8,8) circle [radius = .1];
\node[left] at (8,8) {$a$};
\draw[fill] (7,7) circle [radius = .1];
\node[left] at (7,7) {$b$};
\draw[fill] (4,4) circle [radius = .1];
\node[left] at (4,4) {$c$};
\draw[fill] (2.5,2.5) circle [radius = .1];
\node[left] at (2.5,2.5) {$d$};
\draw[fill] (7,1) circle [radius = .1];
\node[left] at (7,1) {$e$};
\draw[fill] (11.5,2.5) circle [radius = .1];
\node[right] at (11.7,2.5) {$f$};
\draw[fill] (1,1) circle [radius = .1];
\node[left] at (1,1) {$g$};
\draw[fill] (4,1) circle [radius = .1];
\node[right] at (4,1) {$h$};
\draw[fill] (10,1) circle [radius = .1];
\node[left] at (10,1) {$i$};
\draw[fill] (13,1) circle [radius = .1];
\node[right] at (13.2,1) {$j$};
\draw[ultra thick, dashed] (5,0)--(2.5,2.5)--(7,7)--(14,0);
\draw[ultra thick, dashed] (0,0)--(1,1)--(2,0);
\draw[ultra thick, dashed] (9,0)--(10,1)--(11,0);
\node[below, align=center] at (8,0) {A path system realizing
$U_2 = \{b,g,i\}$ \\
where $[\Q_2]$ is $b$-maintaining};
\end{tikzpicture}$\qquad$\begin{tikzpicture}[scale=.45]
\draw(0,0)--(8,8)--(16,0);
\draw(1,1)--(2,0);
\draw (2.5,2.5)--(5,0);
\draw (4,1)--(3,0);
\draw (4,4)--(8,0);
\draw (7,1)--(6,0);
\draw (7,7)--(14,0);
\draw (11.5,2.5) -- (9,0);
\draw (10,1)--(11,0);
\draw (13,1)--(12,0);
\draw[fill] (8,8) circle [radius = .1];
\node[left] at (8,8) {$a$};
\draw[fill] (7,7) circle [radius = .1];
\node[left] at (7,7) {$b$};
\draw[fill] (4,4) circle [radius = .1];
\node[left] at (4,4) {$c$};
\draw[fill] (2.5,2.5) circle [radius = .1];
\node[left] at (2.5,2.5) {$d$};
\draw[fill] (7,1) circle [radius = .1];
\node[left] at (7,1) {$e$};
\draw[fill] (11.5,2.5) circle [radius = .1];
\node[right] at (11.7,2.5) {$f$};
\draw[fill] (1,1) circle [radius = .1];
\node[left] at (1,1) {$g$};
\draw[fill] (4,1) circle [radius = .1];
\node[right] at (4,1) {$h$};
\draw[fill] (10,1) circle [radius = .1];
\node[left] at (10,1) {$i$};
\draw[fill] (13,1) circle [radius = .1];
\node[right] at (13.2,1) {$j$};
\draw[ultra thick, dashed] (8,0)--(4,4)--(8,8)--(16,0);
\draw[ultra thick, dashed] (2,0)--(1,1)--(2.5,2.5)--(4,1)--(3,0);
\node[below, align=center] at (8,0) {A path system realizing\\
$U_3 = \{a,d\}$};
\end{tikzpicture}

\begin{tikzpicture}[scale=.45]
\draw(0,0)--(8,8)--(16,0);
\draw(1,1)--(2,0);
\draw (2.5,2.5)--(5,0);
\draw (4,1)--(3,0);
\draw (4,4)--(8,0);
\draw (7,1)--(6,0);
\draw (7,7)--(14,0);
\draw (11.5,2.5) -- (9,0);
\draw (10,1)--(11,0);
\draw (13,1)--(12,0);
\draw[fill] (8,8) circle [radius = .1];
\node[left] at (8,8) {$a$};
\draw[fill] (7,7) circle [radius = .1];
\node[left] at (7,7) {$b$};
\draw[fill] (4,4) circle [radius = .1];
\node[left] at (4,4) {$c$};
\draw[fill] (2.5,2.5) circle [radius = .1];
\node[left] at (2.5,2.5) {$d$};
\draw[fill] (7,1) circle [radius = .1];
\node[left] at (7,1) {$e$};
\draw[fill] (11.5,2.5) circle [radius = .1];
\node[right] at (11.7,2.5) {$f$};
\draw[fill] (1,1) circle [radius = .1];
\node[left] at (1,1) {$g$};
\draw[fill] (4,1) circle [radius = .1];
\node[right] at (4,1) {$h$};
\draw[fill] (10,1) circle [radius = .1];
\node[left] at (10,1) {$i$};
\draw[fill] (13,1) circle [radius = .1];
\node[right] at (13.2,1) {$j$};
\draw[ultra thick] (4,4)--(7,7)--(11.5,2.5)--(9,0);
\draw[ultra thick, dashed] (0,0)--(4,4);
\draw[ultra thick, dashed] (3,0)--(4,1)--(5,0);
\node[below, align=center] at (8,0) {A path system realizing\\
$\bar{V_1} = \{b,h\}$ where $[\bar{\P_1}]$ is $b$-maintaining};
\end{tikzpicture}$ \qquad$\begin{tikzpicture}[scale=.45]
\draw(0,0)--(8,8)--(16,0);
\draw(1,1)--(2,0);
\draw (2.5,2.5)--(5,0);
\draw (4,1)--(3,0);
\draw (4,4)--(8,0);
\draw (7,1)--(6,0);
\draw (7,7)--(14,0);
\draw (11.5,2.5) -- (9,0);
\draw (10,1)--(11,0);
\draw (13,1)--(12,0);
\draw[fill] (8,8) circle [radius = .1];
\node[left] at (8,8) {$a$};
\draw[fill] (7,7) circle [radius = .1];
\node[left] at (7,7) {$b$};
\draw[fill] (4,4) circle [radius = .1];
\node[left] at (4,4) {$c$};
\draw[fill] (2.5,2.5) circle [radius = .1];
\node[left] at (2.5,2.5) {$d$};
\draw[fill] (7,1) circle [radius = .1];
\node[left] at (7,1) {$e$};
\draw[fill] (11.5,2.5) circle [radius = .1];
\node[right] at (11.7,2.5) {$f$};
\draw[fill] (1,1) circle [radius = .1];
\node[left] at (1,1) {$g$};
\draw[fill] (4,1) circle [radius = .1];
\node[right] at (4,1) {$h$};
\draw[fill] (10,1) circle [radius = .1];
\node[left] at (10,1) {$i$};
\draw[fill] (13,1) circle [radius = .1];
\node[right] at (13.2,1) {$j$};
\draw[ultra thick] (2.5,2.5)--(4,4)--(7,1)--(6,0);
\draw[ultra thick] (9,0)--(10,1)--(11,0);
\draw[ultra thick] (12,0)--(13,1)--(14,0);
\draw[ultra thick, dashed] (0,0)--(1,1)--(2,0);
\draw[ultra thick, dashed] (5,0)--(2.5,2.5);
\node[below, align=center] at (8,0) {A path system realizing\\
$\bar{V_2} = \{c,g,i,j\}$ };
\end{tikzpicture}$\qquad$\begin{tikzpicture}[scale=.45]
\draw(0,0)--(8,8)--(16,0);
\draw(1,1)--(2,0);
\draw (2.5,2.5)--(5,0);
\draw (4,1)--(3,0);
\draw (4,4)--(8,0);
\draw (7,1)--(6,0);
\draw (7,7)--(14,0);
\draw (11.5,2.5) -- (9,0);
\draw (10,1)--(11,0);
\draw (13,1)--(12,0);
\draw[fill] (8,8) circle [radius = .1];
\node[left] at (8,8) {$a$};
\draw[fill] (7,7) circle [radius = .1];
\node[left] at (7,7) {$b$};
\draw[fill] (4,4) circle [radius = .1];
\node[left] at (4,4) {$c$};
\draw[fill] (2.5,2.5) circle [radius = .1];
\node[left] at (2.5,2.5) {$d$};
\draw[fill] (7,1) circle [radius = .1];
\node[left] at (7,1) {$e$};
\draw[fill] (11.5,2.5) circle [radius = .1];
\node[right] at (11.7,2.5) {$f$};
\draw[fill] (1,1) circle [radius = .1];
\node[left] at (1,1) {$g$};
\draw[fill] (4,1) circle [radius = .1];
\node[right] at (4,1) {$h$};
\draw[fill] (10,1) circle [radius = .1];
\node[left] at (10,1) {$i$};
\draw[fill] (13,1) circle [radius = .1];
\node[right] at (13.2,1) {$j$};
\draw[ultra thick] (6,0)--(7,1)--(8,0);
\draw[ultra thick] (14,0)--(7,7)--(8,8)--(16,0);
\draw[ultra thick, dashed] (2,0)--(1,1)--(2.5,2.5)--(4,1)--(3,0);
\node[below, align=center] at (8,0) {A path system realizing\\
$\bar{V_3} = \{a,d,e\}$};
\end{tikzpicture}
\end{center}
\caption{Proof of Lemma \ref{lem:minreps}. This figure illustrates the case where $\{ [\Q_1] , \dots , [\Q_m]\}$ contains fewer $b$-nonmaintaining vertices than $\{[\P_1] , \dots , [\P_m]\}$. The first row of trees are path systems that realize $\bv = [\P_1] + [\P_2] + [\P_3] \in 3R_T$. The second row of trees are path systems that realize $\bu = [\Q_1] + [\Q_2] + [\Q_3] \in 3R_T$ that satisfies the assumptions of the lemma. The third row of trees are a new set of path systems that realize $\bv$ using fewer $b$-nonmaintaining vertices, which we obtained by applying the procedure discussed in the proof of the lemma.}
\label{fig:minreps1}
\end{sidewaysfigure}

Now suppose that $\{ [\Q_1], \dots , [\Q_m]\}$ contains fewer $c$-nonmaintaining vertices than $\{[\P_1], \dots, [\P_m]\}$.   Figure \ref{fig:minreps2} depicts an example of this case and of the procedure that we describe in the following proof. Without loss of generality, let $[\P_1]$ be $c$-nonmaintaining.  Since $[\P_1] + \dots + [\P_m]$ is $f$-compressed, for all $V_i$ with $b,c \notin V_i$, $f$ is blocked in $V_i$. Without loss of generality, let $[\P_1], \dots , [\P_r]$ and $[\Q_1], \dots, [\Q_{r'}]$ be the $c$-nonmaintaining vertices where $r' < r$. Let $[\P_{r+1}], \dots, [\P_s]$ and $[\Q_{r'+1}], \dots [\Q_s]$ be the rest of the vertices with $b$ or $c$ coordinate equal to 1. Note that by assumption, there are the same number of vertices summed in the representations of $\bu$ and $\bv$.

\begin{sidewaysfigure}

\begin{center}
\begin{tikzpicture}[scale=.4]
\draw(0,0)--(8,8)--(16,0);
\draw(1,1)--(2,0);
\draw (2.5,2.5)--(5,0);
\draw (4,1)--(3,0);
\draw (4,4)--(8,0);
\draw (7,1)--(6,0);
\draw (7,7)--(14,0);
\draw (11.5,2.5) -- (9,0);
\draw (10,1)--(11,0);
\draw (13,1)--(12,0);
\draw[fill] (8,8) circle [radius = .1];
\node[left] at (8,8) {$a$};
\draw[fill] (7,7) circle [radius = .1];
\node[left] at (7,7) {$b$};
\draw[fill] (4,4) circle [radius = .1];
\node[left] at (4,4) {$c$};
\draw[fill] (2.5,2.5) circle [radius = .1];
\node[left] at (2.5,2.5) {$d$};
\draw[fill] (7,1) circle [radius = .1];
\node[left] at (7,1) {$e$};
\draw[fill] (11.5,2.5) circle [radius = .1];
\node[right] at (11.7,2.5) {$f$};
\draw[fill] (1,1) circle [radius = .1];
\node[left] at (1,1) {$g$};
\draw[fill] (4,1) circle [radius = .1];
\node[right] at (4,1) {$h$};
\draw[fill] (10,1) circle [radius = .1];
\node[left] at (10,1) {$i$};
\draw[fill] (13,1) circle [radius = .1];
\node[right] at (13.2,1) {$j$};
\draw[ultra thick] (2,0)--(1,1)--(4,4)--(7,1)--(6,0);
\draw[ultra thick] (9,0)--(11.5,2.5)--(14,0);
\node[below, align=center] at (8,0) {A path system realizing $V_1 = \{c,f\}$\\
 in which $[\P_1]$ is $c$-nonmaintaining};
\end{tikzpicture}$\quad$\begin{tikzpicture}[scale=.4]
\draw(0,0)--(8,8)--(16,0);
\draw(1,1)--(2,0);
\draw (2.5,2.5)--(5,0);
\draw (4,1)--(3,0);
\draw (4,4)--(8,0);
\draw (7,1)--(6,0);
\draw (7,7)--(14,0);
\draw (11.5,2.5) -- (9,0);
\draw (10,1)--(11,0);
\draw (13,1)--(12,0);
\draw[fill] (8,8) circle [radius = .1];
\node[left] at (8,8) {$a$};
\draw[fill] (7,7) circle [radius = .1];
\node[left] at (7,7) {$b$};
\draw[fill] (4,4) circle [radius = .1];
\node[left] at (4,4) {$c$};
\draw[fill] (2.5,2.5) circle [radius = .1];
\node[left] at (2.5,2.5) {$d$};
\draw[fill] (7,1) circle [radius = .1];
\node[left] at (7,1) {$e$};
\draw[fill] (11.5,2.5) circle [radius = .1];
\node[right] at (11.7,2.5) {$f$};
\draw[fill] (1,1) circle [radius = .1];
\node[left] at (1,1) {$g$};
\draw[fill] (4,1) circle [radius = .1];
\node[right] at (4,1) {$h$};
\draw[fill] (10,1) circle [radius = .1];
\node[left] at (10,1) {$i$};
\draw[fill] (13,1) circle [radius = .1];
\node[right] at (13.2,1) {$j$};
\draw[ultra thick] (0,0)--(1,1)--(2,0);
\draw[ultra thick] (5,0)--(2.5,2.5)--(4,4)--(8,0);
\node[below, align=center] at (8,0) {A path system realizing $V_2 = \{c,g\}$ \\
in which $[\P_2]$ is $c$-maintaining};
\end{tikzpicture}$\quad$\begin{tikzpicture}[scale=.4]
\draw(0,0)--(8,8)--(16,0);
\draw(1,1)--(2,0);
\draw (2.5,2.5)--(5,0);
\draw (4,1)--(3,0);
\draw (4,4)--(8,0);
\draw (7,1)--(6,0);
\draw (7,7)--(14,0);
\draw (11.5,2.5) -- (9,0);
\draw (10,1)--(11,0);
\draw (13,1)--(12,0);
\draw[fill] (8,8) circle [radius = .1];
\node[left] at (8,8) {$a$};
\draw[fill] (7,7) circle [radius = .1];
\node[left] at (7,7) {$b$};
\draw[fill] (4,4) circle [radius = .1];
\node[left] at (4,4) {$c$};
\draw[fill] (2.5,2.5) circle [radius = .1];
\node[left] at (2.5,2.5) {$d$};
\draw[fill] (7,1) circle [radius = .1];
\node[left] at (7,1) {$e$};
\draw[fill] (11.5,2.5) circle [radius = .1];
\node[right] at (11.7,2.5) {$f$};
\draw[fill] (1,1) circle [radius = .1];
\node[left] at (1,1) {$g$};
\draw[fill] (4,1) circle [radius = .1];
\node[right] at (4,1) {$h$};
\draw[fill] (10,1) circle [radius = .1];
\node[left] at (10,1) {$i$};
\draw[fill] (13,1) circle [radius = .1];
\node[right] at (13.2,1) {$j$};
\draw[ultra thick] (2,0)--(1,1)--(2.5,2.5)--(4,1)--(3,0);
\draw[ultra thick] (8,0)--(4,4)--(8,8)--(16,0);
\draw[ultra thick] (9,0)--(10,1)--(11,0);
\draw[ultra thick] (12,0)--(13,1)--(14,0);
\node[below, align=center] at (8,0) {A path system realizing\\
$V_3 = \{a,d,i,j\}$ in which $f$ is blocked};
\end{tikzpicture}

\begin{tikzpicture}[scale=.4]
\draw(0,0)--(8,8)--(16,0);
\draw(1,1)--(2,0);
\draw (2.5,2.5)--(5,0);
\draw (4,1)--(3,0);
\draw (4,4)--(8,0);
\draw (7,1)--(6,0);
\draw (7,7)--(14,0);
\draw (11.5,2.5) -- (9,0);
\draw (10,1)--(11,0);
\draw (13,1)--(12,0);
\draw[fill] (8,8) circle [radius = .1];
\node[left] at (8,8) {$a$};
\draw[fill] (7,7) circle [radius = .1];
\node[left] at (7,7) {$b$};
\draw[fill] (4,4) circle [radius = .1];
\node[left] at (4,4) {$c$};
\draw[fill] (2.5,2.5) circle [radius = .1];
\node[left] at (2.5,2.5) {$d$};
\draw[fill] (7,1) circle [radius = .1];
\node[left] at (7,1) {$e$};
\draw[fill] (11.5,2.5) circle [radius = .1];
\node[right] at (11.7,2.5) {$f$};
\draw[fill] (1,1) circle [radius = .1];
\node[left] at (1,1) {$g$};
\draw[fill] (4,1) circle [radius = .1];
\node[right] at (4,1) {$h$};
\draw[fill] (10,1) circle [radius = .1];
\node[left] at (10,1) {$i$};
\draw[fill] (13,1) circle [radius = .1];
\node[right] at (13.2,1) {$j$};
\draw[ultra thick, dashed] (0,0)--(1,1)--(2,0);
\draw[ultra thick, dashed] (5,0)--(2.5,2.5)--(4,4)--(8,0);
\draw[ultra thick, dashed] (9,0)--(10,1)--(11,0);
\node[below, align=center] at (8,0) {A path system realizing $U_1 = \{c,g,i\}$\\
 where $[\Q_1]$ is $c$-maintaining};
\end{tikzpicture}$\quad$\begin{tikzpicture}[scale=.4]
\draw(0,0)--(8,8)--(16,0);
\draw(1,1)--(2,0);
\draw (2.5,2.5)--(5,0);
\draw (4,1)--(3,0);
\draw (4,4)--(8,0);
\draw (7,1)--(6,0);
\draw (7,7)--(14,0);
\draw (11.5,2.5) -- (9,0);
\draw (10,1)--(11,0);
\draw (13,1)--(12,0);
\draw[fill] (8,8) circle [radius = .1];
\node[left] at (8,8) {$a$};
\draw[fill] (7,7) circle [radius = .1];
\node[left] at (7,7) {$b$};
\draw[fill] (4,4) circle [radius = .1];
\node[left] at (4,4) {$c$};
\draw[fill] (2.5,2.5) circle [radius = .1];
\node[left] at (2.5,2.5) {$d$};
\draw[fill] (7,1) circle [radius = .1];
\node[left] at (7,1) {$e$};
\draw[fill] (11.5,2.5) circle [radius = .1];
\node[right] at (11.7,2.5) {$f$};
\draw[fill] (1,1) circle [radius = .1];
\node[left] at (1,1) {$g$};
\draw[fill] (4,1) circle [radius = .1];
\node[right] at (4,1) {$h$};
\draw[fill] (10,1) circle [radius = .1];
\node[left] at (10,1) {$i$};
\draw[fill] (13,1) circle [radius = .1];
\node[right] at (13.2,1) {$j$};
\draw[ultra thick, dashed] (0,0)--(7,7)--(11.5,2.5)--(9,0);
\draw[ultra thick, dashed] (12,0)--(13,1)--(14,0);
\node[below, align=center] at (8,0) {A path system realizing $U_2 = \{b,j\}$\\
 in which $f$ is blocked};
\end{tikzpicture}$\quad$\begin{tikzpicture}[scale=.4]
\draw(0,0)--(8,8)--(16,0);
\draw(1,1)--(2,0);
\draw (2.5,2.5)--(5,0);
\draw (4,1)--(3,0);
\draw (4,4)--(8,0);
\draw (7,1)--(6,0);
\draw (7,7)--(14,0);
\draw (11.5,2.5) -- (9,0);
\draw (10,1)--(11,0);
\draw (13,1)--(12,0);
\draw[fill] (8,8) circle [radius = .1];
\node[left] at (8,8) {$a$};
\draw[fill] (7,7) circle [radius = .1];
\node[left] at (7,7) {$b$};
\draw[fill] (4,4) circle [radius = .1];
\node[left] at (4,4) {$c$};
\draw[fill] (2.5,2.5) circle [radius = .1];
\node[left] at (2.5,2.5) {$d$};
\draw[fill] (7,1) circle [radius = .1];
\node[left] at (7,1) {$e$};
\draw[fill] (11.5,2.5) circle [radius = .1];
\node[right] at (11.7,2.5) {$f$};
\draw[fill] (1,1) circle [radius = .1];
\node[left] at (1,1) {$g$};
\draw[fill] (4,1) circle [radius = .1];
\node[right] at (4,1) {$h$};
\draw[fill] (10,1) circle [radius = .1];
\node[left] at (10,1) {$i$};
\draw[fill] (13,1) circle [radius = .1];
\node[right] at (13.2,1) {$j$};
\draw[ultra thick, dashed] (0,0)--(2.5,2.5)--(5,0);
\draw[ultra thick, dashed] (8,0)--(4,4)--(8,8)--(16,0);
\draw[ultra thick, dashed] (11,0)--(10,1)--(11.5,2.5)--(13,1)--(12,0);
\node[below, align=center] at (8,0) {A path system realizing\\
$U_3 = \{a,d,f\}$};
\end{tikzpicture}

\begin{tikzpicture}[scale=.4]
\draw(0,0)--(8,8)--(16,0);
\draw(1,1)--(2,0);
\draw (2.5,2.5)--(5,0);
\draw (4,1)--(3,0);
\draw (4,4)--(8,0);
\draw (7,1)--(6,0);
\draw (7,7)--(14,0);
\draw (11.5,2.5) -- (9,0);
\draw (10,1)--(11,0);
\draw (13,1)--(12,0);
\draw[fill] (8,8) circle [radius = .1];
\node[left] at (8,8) {$a$};
\draw[fill] (7,7) circle [radius = .1];
\node[left] at (7,7) {$b$};
\draw[fill] (4,4) circle [radius = .1];
\node[left] at (4,4) {$c$};
\draw[fill] (2.5,2.5) circle [radius = .1];
\node[left] at (2.5,2.5) {$d$};
\draw[fill] (7,1) circle [radius = .1];
\node[left] at (7,1) {$e$};
\draw[fill] (11.5,2.5) circle [radius = .1];
\node[right] at (11.7,2.5) {$f$};
\draw[fill] (1,1) circle [radius = .1];
\node[left] at (1,1) {$g$};
\draw[fill] (4,1) circle [radius = .1];
\node[right] at (4,1) {$h$};
\draw[fill] (10,1) circle [radius = .1];
\node[left] at (10,1) {$i$};
\draw[fill] (13,1) circle [radius = .1];
\node[right] at (13.2,1) {$j$};
\draw[ultra thick] (2,0)--(1,1)--(4,4)--(7,1)--(6,0);
\draw[ultra thick, dotted] (9,0)--(10,1)--(11,0);
\node[below, align=center] at (8,0) {A path system realizing $\bar{V_1} = \{c,i\}$\\
 where $[\bar{\P_1}]$ is $c$-maintaining};
\end{tikzpicture}$\quad$\begin{tikzpicture}[scale=.4]
\draw(0,0)--(8,8)--(16,0);
\draw(1,1)--(2,0);
\draw (2.5,2.5)--(5,0);
\draw (4,1)--(3,0);
\draw (4,4)--(8,0);
\draw (7,1)--(6,0);
\draw (7,7)--(14,0);
\draw (11.5,2.5) -- (9,0);
\draw (10,1)--(11,0);
\draw (13,1)--(12,0);
\draw[fill] (8,8) circle [radius = .1];
\node[left] at (8,8) {$a$};
\draw[fill] (7,7) circle [radius = .1];
\node[left] at (7,7) {$b$};
\draw[fill] (4,4) circle [radius = .1];
\node[left] at (4,4) {$c$};
\draw[fill] (2.5,2.5) circle [radius = .1];
\node[left] at (2.5,2.5) {$d$};
\draw[fill] (7,1) circle [radius = .1];
\node[left] at (7,1) {$e$};
\draw[fill] (11.5,2.5) circle [radius = .1];
\node[right] at (11.7,2.5) {$f$};
\draw[fill] (1,1) circle [radius = .1];
\node[left] at (1,1) {$g$};
\draw[fill] (4,1) circle [radius = .1];
\node[right] at (4,1) {$h$};
\draw[fill] (10,1) circle [radius = .1];
\node[left] at (10,1) {$i$};
\draw[fill] (13,1) circle [radius = .1];
\node[right] at (13.2,1) {$j$};
\draw[ultra thick] (0,0)--(1,1)--(2,0);
\draw[ultra thick] (5,0)--(2.5,2.5)--(4,4)--(8,0);
\draw[ultra thick, dashed] (12,0)--(13,1)--(14,0);
\node[below, align=center] at (8,0) {A path system realizing \\
$\bar{V_2} = \{c,g,j\}$ where $[\bar{\P_2}]$ is $c$-maintaining};
\end{tikzpicture}$\quad$\begin{tikzpicture}[scale=.4]
\draw(0,0)--(8,8)--(16,0);
\draw(1,1)--(2,0);
\draw (2.5,2.5)--(5,0);
\draw (4,1)--(3,0);
\draw (4,4)--(8,0);
\draw (7,1)--(6,0);
\draw (7,7)--(14,0);
\draw (11.5,2.5) -- (9,0);
\draw (10,1)--(11,0);
\draw (13,1)--(12,0);
\draw[fill] (8,8) circle [radius = .1];
\node[left] at (8,8) {$a$};
\draw[fill] (7,7) circle [radius = .1];
\node[left] at (7,7) {$b$};
\draw[fill] (4,4) circle [radius = .1];
\node[left] at (4,4) {$c$};
\draw[fill] (2.5,2.5) circle [radius = .1];
\node[left] at (2.5,2.5) {$d$};
\draw[fill] (7,1) circle [radius = .1];
\node[left] at (7,1) {$e$};
\draw[fill] (11.5,2.5) circle [radius = .1];
\node[right] at (11.7,2.5) {$f$};
\draw[fill] (1,1) circle [radius = .1];
\node[left] at (1,1) {$g$};
\draw[fill] (4,1) circle [radius = .1];
\node[right] at (4,1) {$h$};
\draw[fill] (10,1) circle [radius = .1];
\node[left] at (10,1) {$i$};
\draw[fill] (13,1) circle [radius = .1];
\node[right] at (13.2,1) {$j$};
\draw[ultra thick] (2,0)--(1,1)--(2.5,2.5)--(4,1)--(3,0);
\draw[ultra thick] (8,0)--(4,4)--(8,8)--(16,0);
\draw[ultra thick, dashed] (11,0)--(10,1)--(11.5,2.5)--(13,1)--(12,0);
\node[below, align=center] at (8,0) {A path system realizing \\
$\bar{V_3} = \{a,d,f\}$ };
\end{tikzpicture}
\end{center}
\caption{Proof of Lemma \ref{lem:minreps}. $\{ [\Q_1] , \dots , [\Q_m]\}$ contains fewer $c$-nonmaintaining vertices than $\{[\P_1] , \dots , [\P_m]\}$. The first row of trees are path systems that realize $\bv = [\P_1] + [\P_2] + [\P_3] \in 3R_T$. The second row of trees are path systems that realize $\bu = [\Q_1] + [\Q_2] + [\Q_3] \in 3R_T$, which satisfies the assumptions of the lemma. The third row of trees are a new set of path systems that realize $\bv$ using fewer $c$-nonmaintaining vertices, which we obtained by applying the procedure discussed in the proof of the lemma.} \label{fig:minreps2}
\end{sidewaysfigure}

Let $\bar{V_i} = (V_i - V_i^f) \cup U_i^f$ for all $i$. We claim that each of the $\bar{V_i}$ are valid top-sets, and that the collection of all $\bar{V_i}$ has the same number of $c$-nonmaintaining vertices as the $U_i$, and the same number of $b$-nonmaintaining vertices as the $V_i$.

First, let $i \leq r'$. Then since $[\P_i]$ and $[\Q_i]$ are both $c$-nonmaintaining, $f$ is blocked in both $V_i$ and $U_i$.  Therefore, all paths in $\P_i$ and $\Q_i$ that intersect the $f$-subtree are contained entirely within the $f$-subtree. So $\bar{\P_i} = (\P_i - \P_i^f) \cup \Q_i^f$ is a path system that realizes $\bar{V_i}$, as needed.

Next, let $r' < i \leq s$. Then $[\Q_i]$ is either $c$-maintaining or has $b$-coordinate equal to 1. In either case, $f$ is not blocked in $U_i$. So there exists a path $Q$ from from $f$ to a leaf descended from $f$ with no node along $Q$ in $U_i$.

Consider the case when $b \in V_i$. Let $P \in \P_i$ with $b$ as its top-most node, and let $\hat{P}$ be the path from $b$ to a leaf below $c$ that is contained in $P$. Let $P' = \hat{P} \cup \{bf\} \cup Q$. Then
\[
\bar{\P_i} = (\P_i - (\{ P\} \cup \P_i^f)) \cup \{P'\} \cup \Q_i^f
\]
realizes $\bar{V_i}$.

Now suppose that $c \in V_i$. If there does not exist $P \in \P_i$ with a node above $b$ as its top-most node that passes through the $f$-subtree, then all paths in $\P_i$ that intersect the $f$-subtree are contained in the $f$-subtree. So $\bar{\P_i} = (\P_i - \P_i^d) \cup \Q_i^d$ is a path system that realizes $\bar{V_i}$.

If there does exist $P \in \P_i$ with top-most node above $b$ that passes through the $f$-subtree, let $\hat{P}$ denote $P$ without the portion of $P$ that lies in the $f$-subtree. Let $P' = \hat{P} \cup Q$. Then
\[
\bar{\P_i} = (\P_i - (\{P\} \cup \P_i^f))\cup \{P'\} \cup \Q_i^f
\]
is a path system that realizes $\bar{V_i}$.

Note that in all cases when $r' < i \leq s$, $f$ is not blocked in $\bar{V_i}$. Since $r'<r$, this means that there are fewer $c$-nonmaintaining vertices in  $\{[\bar{\P_1}], \dots , [\bar{\P_s}] \}$ than in $\{[\P_1], \dots , [\P_s]\}$. Furthermore, since the paths in the $d$-subtrees remain unchanged, this operation cannot create new $b$-nonmaintaining vertices.

Finally, let $i>s$. Then $b,c \not\in V_i, U_i$. Since $f$ is blocked in every $V_i$, all paths in $\P_i$ that intersect the $f$-subtree are contained entirely in the $f$-subtree. So $\bar{\P} = (\P_i - \P_i^f) \cup \Q_i^f$ is a path system that realizes $\bar{V_i}$.

Since
\[
\sum_{i=1}^m [\P_i]^f= \sum_{i=1}^m [\Q_i]^f,
\]
and since $[\bar{\P_i}]^f = [\Q_i]^f$ for all $i$, $[\bar{\P_1}] + \dots + [\bar{\P_m}]$ is a representation of $\bv$ using fewer nonmaintaining vertices than $[\P_1] + \dots + [\P_m]$. So $[\P_1] + \dots + [\P_m]$ is not minimal.  An example of the operation used to obtain $[\bar{\P_1}],\dots ,[\bar{\P_m}]$ is illustrated in Figure \ref{fig:minreps2}.
\end{proof}

\begin{cor}
The map $\phi^{T,T'}_m$ is well-defined.
\end{cor}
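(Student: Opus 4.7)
The plan is to show that if $\bv \in mR_T \cap \Z^{n-1}$ has two minimal representations $\sum_{i=1}^m [\P_i]$ and $\sum_{i=1}^m [\Q_i]$, then
\[
\sum_{i=1}^m \phi^{T,T'}([\P_i]) = \sum_{i=1}^m \phi^{T,T'}([\Q_i]).
\]
By Lemma \ref{lem:compressed}, both representations are $df$-compressed. Applying Lemma \ref{lem:minreps} with $\bu = \bv$ (so the hypotheses $v_b + v_c = u_b + u_c$ and $v_x = u_x$ for $x \neq b,c$ are automatic), we conclude that the multisets $\{[\P_1],\ldots,[\P_m]\}$ and $\{[\Q_1],\ldots,[\Q_m]\}$ have exactly the same number of $b$-nonmaintaining vertices and exactly the same number of $c$-nonmaintaining vertices; otherwise one of the two representations would fail to be minimal.

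Next, I would analyze the difference $\phi^{T,T'}([\P_i]) - [\P_i]$ coordinate by coordinate. By the definition of $\phi^{T,T'}$, this difference is zero except possibly at coordinates $b$ and $c$: for any maintaining vertex it is the zero vector, for a $b$-nonmaintaining vertex it equals $-\be_b + \be_c$, and for a $c$-nonmaintaining vertex it equals $\be_b - \be_c$. Let $\alpha$ denote the number of $b$-nonmaintaining vertices in any minimal representation of $\bv$ and let $\beta$ denote the corresponding number of $c$-nonmaintaining vertices (these numbers depend only on $\bv$ by the previous paragraph). Then for every minimal representation $\sum_{i=1}^m [\P_i]$ of $\bv$,
\[
\sum_{i=1}^m \phi^{T,T'}([\P_i]) = \bv + (\beta - \alpha)\be_b + (\alpha - \beta)\be_c.
\]
The right-hand side depends only on $\bv$ (and the fixed NNI data), not on the chosen minimal representation.

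Finally, I would verify that this common value actually lies in $mR_{T'} \cap \Z^{n-1}$. Each $\phi^{T,T'}([\P_i])$ is by construction a $0/1$ vertex of $R_{T'}$, so the sum of $m$ such vectors lies in $mR_{T'}$ and is automatically integral. This establishes that $\phi^{T,T'}_m$ is well-defined as a map into $mR_{T'} \cap \Z^{n-1}$.

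The only real content is the combination of Lemmas \ref{lem:compressed} and \ref{lem:minreps}; the bookkeeping on the $b$- and $c$-coordinates is routine. The main obstacle has therefore already been handled in those two lemmas, where the surgery on paths through the $d$- and $f$-subtrees was used to show that $df$-compressedness characterizes minimality in the relevant sense. With that in hand, the well-definedness of $\phi^{T,T'}_m$ reduces to the invariance of the two nonmaintaining counts $\alpha$ and $\beta$ across minimal representations of $\bv$.
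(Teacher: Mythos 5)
Your proof is correct and follows essentially the same approach as the paper's: invoke Lemma \ref{lem:minreps} (with $\bu = \bv$ and $\{[\Q_i]\}$ another minimal representation, which is $df$-compressed by Lemma \ref{lem:compressed}) to show that the counts of $b$- and $c$-nonmaintaining vertices are invariants of $\bv$, hence the image is independent of the chosen minimal representation. Your explicit formula $\sum_i \phi^{T,T'}([\P_i]) = \bv + (\beta-\alpha)\be_b + (\alpha-\beta)\be_c$ and the verification that the result lands in $mR_{T'} \cap \Z^{n-1}$ are nice additional bookkeeping that the paper leaves implicit, but the essential content is the same.
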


\begin{proof}
It suffices to show that all minimal representations of $v \in mR_T \cap \Z^{n-1}$ have the same number of $b$- and $c$-nonmaintaining vertices. This follows from Lemma \ref{lem:minreps}.
\end{proof}

\begin{cor}\label{cor:latticepointbijection}
The map $\phi^{T,T'}_m$ is a bijection.
\end{cor}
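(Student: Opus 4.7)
My plan is to construct the inverse map $\phi^{T',T}_m$ explicitly (built from the reverse NNI from $T'$ to $T$, which is associated to the triple $(b, c, f)$ in $T'$) and verify that $\phi^{T',T}_m \circ \phi^{T,T'}_m = \mathrm{id}$ on $mR_T \cap \Z^{n-1}$; the other composition will then follow by swapping the roles of $T$ and $T'$. All of the preceding results (Proposition \ref{prop:blocked}, Lemmas \ref{lem:compressed} and \ref{lem:minreps}, and the well-definedness of $\phi^{T,T'}_m$) apply symmetrically to the reverse NNI, so $\phi^{T',T}_m$ is itself well-defined.

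The key arithmetic is the following: since $\phi^{T,T'}$ fixes every coordinate other than $b$ and $c$, and swaps the $b$- and $c$-coordinates precisely on nonmaintaining vertices, if $\bv = \sum_{i=1}^m [\P_i]$ is any representation with $r_b$ many $b$-nonmaintaining and $r_c$ many $c$-nonmaintaining summands, then $\phi^{T,T'}_m(\bv)_x = v_x$ for $x \neq b, c$, while $\phi^{T,T'}_m(\bv)_b = v_b - r_b + r_c$ and $\phi^{T,T'}_m(\bv)_c = v_c + r_b - r_c$. Moreover, $\phi^{T,T'}$ sends each $b$-nonmaintaining vertex of $R_T$ to a $c$-nonmaintaining vertex of $R_{T'}$ (and vice versa), while fixing each maintaining vertex.

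Now fix $\bv \in mR_T \cap \Z^{n-1}$ with a minimal representation $\bv = \sum_{i=1}^m [\P_i]$ having counts $r_b, r_c$. Set $\bu := \phi^{T,T'}_m(\bv)$, pick any minimal representation $\bu = \sum_{j=1}^m [\Q_j]$ in $T'$ with counts $R_b, R_c$, and let $\bw := \phi^{T',T}_m(\bu) = \sum_{j=1}^m \phi^{T',T}([\Q_j]) \in mR_T \cap \Z^{n-1}$. Then $w_x = v_x$ for $x \neq b, c$ and $w_b + w_c = v_b + v_c$, so the pair $(\bv, \bw)$ satisfies the hypotheses of Lemma \ref{lem:minreps}. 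The representation $\sum_{j=1}^m \phi^{T',T}([\Q_j])$ of $\bw$ uses $R_c$ $b$-nonmaintaining and $R_b$ $c$-nonmaintaining summands (by the swapping property above), while $\sum [\P_i]$ is minimal (hence $df$-compressed by Lemma \ref{lem:compressed}), so Lemma \ref{lem:minreps}(i) and (ii) yield $R_c \geq r_b$ and $R_b \geq r_c$. Conversely, $\sum_{i=1}^m \phi^{T,T'}([\P_i])$ is a representation of $\bu$ in $T'$ with $r_b + r_c$ total nonmaintaining summands, and $\sum [\Q_j]$ is minimal in $T'$, so $R_b + R_c \leq r_b + r_c$. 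Combining these inequalities forces $R_b = r_c$ and $R_c = r_b$, and substituting into the formulas for $\bw$ yields $w_b = v_b$ and $w_c = v_c$. Hence $\bw = \bv$ and $\phi^{T',T}_m \circ \phi^{T,T'}_m = \mathrm{id}$, so $\phi^{T,T'}_m$ is a bijection by symmetry.

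The main obstacle lies in the bookkeeping of the previous paragraph: one must carefully match $b$- and $c$-nonmaintaining counts across the two trees under the swap induced by $\phi^{T,T'}$ and $\phi^{T',T}$, and invoke Lemma \ref{lem:minreps} with the minimal representation of $\bv$ playing the role of the preferred (compressed) representation and the representation of $\bw$ obtained from $\sum [\Q_j]$ playing the role of the competitor. A secondary obstacle is verifying that Proposition \ref{prop:blocked} and Lemmas \ref{lem:compressed} and \ref{lem:minreps} really do transfer symmetrically to the reverse NNI, which is routine but requires identifying the analogue of the ``blocked'' conditions for $T'$ (with the roles of $d$ and $f$ suitably interchanged).
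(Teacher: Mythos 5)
Your proof is correct and follows essentially the same route as the paper: both reduce to showing $\phi^{T',T}_m\circ\phi^{T,T'}_m=\mathrm{id}$ by invoking Lemma~\ref{lem:minreps} against a minimal representation of $\phi^{T,T'}_m(\bv)$ in $T'$, though the paper phrases the step as a short contradiction argument while you spell out the count bookkeeping ($R_c\geq r_b$, $R_b\geq r_c$, $R_b+R_c\leq r_b+r_c$) explicitly. One small slip: the reverse NNI from $T'$ to $T$ is the move associated to $(b,c,e)$ in $T'$, not $(b,c,f)$ (your later remark that $d$ and $f$ swap roles is the correct observation), but this labeling point does not affect the argument.
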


\begin{proof}
It suffices to show that $\phi^{T',T}_m$ is the inverse map of $\phi^{T,T'}_m$. Suppose that it is not. Then there exists some $\bv \in mR_T \cap \Z^{n-1}$ such that $\bv = [\P_1] + \dots + [\P_m]$ is a minimal representation, but $\phi^{T,T'}([\P_1]) + \dots + \phi^{T,T'}([\P_m]) = \phi^{T,T'}_m(\bv)$ is not a minimal representation of $\phi^{T,T'}_m(\bv)$.

Let $[\Q_1] + \dots + [\Q_m]= \phi^{T,T'}_m(\bv)$ be minimal. Consider the image $\phi^{T',T}_m (\phi^{T,T'}_m(\bv)) = \phi^{T',T}([\Q_1]) + \dots + \phi^{T',T}([\Q_m])$. The set $\{ \phi^{T',T}([\Q_1]), \dots , \phi^{T',T}([\Q_m]) \}$ contains fewer nonmaintaining vertices than $\{ [\P_1], \dots , [\P_m] \}$ because $\phi^{T',T}$ maps maintaining vertices to maintaining vertices by the proof of Proposition \ref{prop:nonmaintainingbij}. The set $\{ \phi^{T',T}([\Q_1]), \dots , \phi^{T',T}([\Q_m]) \}$ also satisfies all of the assumptions of Lemma \ref{lem:minreps}. So, $[\P_1] + \dots + [\P_m]$ was not actually a minimal representation of $\bv$ and we have reached a contradiction.
\end{proof}

\begin{thm}\label{thm:hilbertseries}
For all rooted binary trees $T$ with $n$ leaves, the Hilbert series of $I_T$ is equal to the Hilbert series of $I_{C_n}$.
\end{thm}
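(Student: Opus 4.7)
The plan is to deduce Theorem \ref{thm:hilbertseries} almost immediately from the machinery already assembled in Section \ref{sec:ehrhart}, since all the technical work is packaged in Corollary \ref{cor:latticepointbijection}. First I would translate the statement about Hilbert series into a statement about Ehrhart polynomials: by Corollary \ref{cor:triangulation}, $R_T$ admits a regular unimodular triangulation, so the Hilbert series of $I_T$ coincides with the Ehrhart series
\[
\mathrm{Ehr}_{R_T}(t) = \sum_{m \geq 0} i_{R_T}(m)\, t^m.
\]
Hence it suffices to show that the Ehrhart polynomial $i_{R_T}(m)$ depends only on the number $n$ of leaves of $T$ and not on its topology.

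Next I would invoke the standard fact from phylogenetics that the space of rooted binary trees on $n$ leaves is connected under nearest neighbor interchanges: any two rooted binary trees on $n$ leaves can be transformed into one another by a finite sequence of NNI moves (see, e.g., \cite{semple2003}). In particular, any tree $T$ with $n$ leaves can be connected by such a sequence to the caterpillar tree $C_n$.

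The heart of the proof — and the step that does all the work — is Corollary \ref{cor:latticepointbijection}, which produces, for each $m \geq 1$, an explicit bijection $\phi^{T,T'}_m: mR_T \cap \Z^{n-1} \to mR_{T'} \cap \Z^{n-1}$ whenever $T'$ is obtained from $T$ by a single NNI move. This bijection immediately gives $i_{R_T}(m) = i_{R_{T'}}(m)$ for every positive integer $m$, and trivially $i_{R_T}(0) = i_{R_{T'}}(0) = 1$. Iterating this equality along any NNI path from $T$ to $C_n$, we conclude $i_{R_T}(m) = i_{R_{C_n}}(m)$ for all $m \geq 0$, and therefore $\mathrm{Ehr}_{R_T}(t) = \mathrm{Ehr}_{R_{C_n}}(t)$.

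Since the Hilbert series of $I_T$ equals $\mathrm{Ehr}_{R_T}(t)$ and the analogous identity holds for $I_{C_n}$ and $R_{C_n}$, the two Hilbert series agree, which is the desired conclusion. There is no real obstacle left at this stage — all the combinatorial difficulty was absorbed into Lemmas \ref{lem:compressed} and \ref{lem:minreps} and the resulting bijection — so the proof is essentially a one-paragraph assembly of Corollary \ref{cor:triangulation}, Corollary \ref{cor:latticepointbijection}, and the NNI-connectedness of the space of binary trees.
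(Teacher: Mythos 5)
Your proof is correct and follows the same route as the paper: reduce the Hilbert series to the Ehrhart series via the regular unimodular triangulation, use NNI-connectedness of rooted binary trees, and invoke the lattice-point bijection of Corollary \ref{cor:latticepointbijection} to transport the Ehrhart count from $T$ to $C_n$. The only cosmetic difference is that you explicitly cite Corollary \ref{cor:triangulation} to justify the Hilbert/Ehrhart identification, whereas the paper records that fact earlier in Section \ref{sec:ehrhart} and simply reuses it.
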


\begin{proof}
Every rooted binary tree can be obtained from the caterpillar tree by a finite sequence of rotations. So, it follows from Corollary \ref{cor:latticepointbijection} that the number of lattice points in the $m$\textsuperscript{th} dilates of $R_T$ is equal to that of $R_{C_n}$ for all trees $T$ with $n$ leaves. So, the Ehrhart polynomials and hence, the Ehrhart series of $R_T$ and $R_{C_n}$ are equal. The Ehrhart series of $R_T$ is equal to the Hilbert series of $I_T$.
\end{proof}

\begin{proof}[Proof of Theorem \ref{thm:volumemain}]
The leading coefficient of the Ehrhart polynomial of a polytope is the (unnormalized) volume of the polytope. We have shown the equality of the Ehrhart polynomials of $R_{C_n}$ and $R_T$ for any $n$-leaf tree $T$. So, $R_{C_n}$ and $R_T$ have the same normalized volumes. This is the $(n-1)$st Euler zig-zag number by Corollary \ref{cor:cattreevolume}.
\end{proof}


\section*{Acknowledgments}

Jane Coons was partially supported by the US National Science Foundation (DGE-1746939)
and by the David and Lucille Packard Foundation.
Seth Sullivant was partially supported by the US National Science Foundation (DMS 1615660) and
by the David and Lucille Packard Foundation.

\bibliographystyle{acm}
\bibliography{CFNMC_bib}

\begin{thebibliography}{10}

\bibitem{beck2007}
{\sc Beck, M., and Robins, S.}
\newblock {\em Computing the continuous discretely}.
\newblock Springer, 2007.

\bibitem{buczynska2007}
{\sc Buczynska, W., and Wisniewski, J.~A.}
\newblock On the geometry of binary symmetric models of phylogenetic trees.
\newblock {\em Journal of the European Mathematical Society 9}, 3 (2007),
  609--635.

\bibitem{buneman1974}
{\sc Buneman, P.}
\newblock A note on the metric properties of trees.
\newblock {\em J. Combinatorial Theory Ser. B 17\/} (1974), 48--50.

\bibitem{evans1993}
{\sc Evans, S.~N., and Speed, T.~P.}
\newblock Invariants of some probability models used in phylogenetic inference.
\newblock {\em The Annals of Statistics\/} (1993), 355--377.

\bibitem{felsenstein2004}
{\sc Felsenstein, J.}
\newblock {\em Inferring phylogenies}, vol.~2.
\newblock Sinauer associates Sunderland, MA, 2004.

\bibitem{hendy1993}
{\sc Hendy, M.~D., and Penny, D.}
\newblock Spectral analysis of phylogenetic data.
\newblock {\em Journal of classification 10}, 1 (1993), 5--24.

\bibitem{jukes1969}
{\sc Jukes, T.~H., Cantor, C.~R., et~al.}
\newblock Evolution of protein molecules.
\newblock {\em Mammalian protein metabolism 3}, 21 (1969), 132.

\bibitem{kimura1980}
{\sc Kimura, M.}
\newblock A simple method for estimating evolutionary rates of base
  substitutions through comparative studies of nucleotide sequences.
\newblock {\em Journal of molecular evolution 16}, 2 (1980), 111--120.

\bibitem{kubjas2013}
{\sc Kubjas, K.}
\newblock {\em Algebraic and combinatorial aspects of group-based models}.
\newblock PhD thesis, Freie Universit{\"a}t Berlin, 2013.

\bibitem{semple2003}
{\sc Semple, C., and Steel, M.~A.}
\newblock {\em Phylogenetics}, vol.~24.
\newblock Oxford University Press on Demand, 2003.

\bibitem{simon1996}
{\sc Simon, B.}
\newblock {\em Representations of finite and compact groups}.
\newblock No.~10. American Mathematical Soc., 1996.

\bibitem{oeis}
{\sc Sloane, N.}
\newblock The online encyclopedia of integer sequences, 2005.
\newblock {\em Published electronically at http://oeis. org\/}.

\bibitem{stanley1986}
{\sc Stanley, R.~P.}
\newblock Two poset polytopes.
\newblock {\em Discrete \& Computational Geometry 1}, 1 (1986), 9--23.

\bibitem{stanley2010}
{\sc Stanley, R.~P.}
\newblock A survey of alternating permuations.
\newblock {\em Contemporary Mathematics 531\/} (2010), 165--196.

\bibitem{sturmfels1996}
{\sc Sturmfels, B.}
\newblock {\em Gr{\"o}bner bases and convex polytopes}, vol.~8.
\newblock American Mathematical Society, 1996.

\bibitem{sturmfels2005}
{\sc Sturmfels, B., and Sullivant, S.}
\newblock Toric ideals of phylogenetic invariants.
\newblock {\em Journal of Computational Biology 12}, 2 (2005), 204--228.

\bibitem{sturmfels2008}
{\sc Sturmfels, B., and Xu, Z.}
\newblock Sagbi bases of cox-nagata rings.
\newblock {\em arXiv preprint arXiv:0803.0892\/} (2008).

\bibitem{sullivant2007}
{\sc Sullivant, S.}
\newblock Toric fiber products.
\newblock {\em Journal of Algebra 316}, 2 (2007), 560--577.

\bibitem{sullivant2018}
{\sc Sullivant, S.}
\newblock {\em Algebraic statistics}, vol.~194 of {\em Graduate Studies in
  Mathematics}.
\newblock American Mathematical Society, Providence, RI, 2018.

\bibitem{west2001}
{\sc West, D.~B.}
\newblock {\em Introduction to Graph Theory}, 2~ed.
\newblock Pearson Education Inc., 2001.

\bibitem{ziegler1995}
{\sc Ziegler, G.~M.}
\newblock {\em Lectures on polytopes}, vol.~152 of {\em Graduate Texts in
  Mathematics}.
\newblock Springer-Verlag, New York, 1995.

\end{thebibliography}

\end{document}